\def\@splitop#1#2\@nil{$\mathscr{#1}\!\!$\calligra#2\,\,}
\newcommand*\DeclareCursiveOperator[2]{%

  \newcommand#1{\mathop{\mbox{\@splitop#2\@nil}}\nolimits}}
\DeclareCursiveOperator{\Bnew}{B}
\DeclareCursiveOperator{\Cnew}{C}
\DeclareCursiveOperator{\Dnew}{D}
\DeclareCursiveOperator{\defe}{Def}
\theoremstyle{plain}
\newtheorem{theorem}{Theorem}[section]
\newtheorem{lemma}[theorem]{Lemma}
\newtheorem{proposition}[theorem]{Proposition}
\newtheorem{remark}[theorem]{Remark}
\theoremstyle{definition}
\numberwithin{equation}{section}
\newcommand{\inde}{\mathcal{I}}
\newcommand{\cs}{{\mathcal C}}
\newcommand{\is}{{\mathscr{I}}}
\newcommand{\rs}{{\mathcal R}}
\newcommand{\sss}{{\mathcal S}}
\newcommand{\qs}{\mathcal{Q}}
\newcommand{\E}{{\mathscr E}}
\newcommand{\F}{{\mathscr F}}
\newcommand{\W}{{\mathscr W}}
\newcommand{\R}{{\mathbb R}}
\newcommand{\N}{{\mathbb N}}
\newcommand{\Z}{{\mathbb Z}}
\newcommand{\ord}{\mathrm{ord}}
\newcommand{\ud}{\,\mathrm{d} }
\newcommand{\weakstar}{\stackrel{\star}{\rightharpoonup}}
\newcommand{\nper}[1]{|#1|_{\rm per}}
\newcommand{\e}{\varepsilon}
\newcommand{\ep}{\varepsilon}
\newcommand{\dist}{\mathrm{dist}}
\newcommand{\newatop}{\genfrac{}{}{0pt}{1}}
\newcommand{\X}{\mathcal{X}}
\newcommand{\mm}{\widehat m}
\newcommand{\AS}{\mathcal{AS}}
\newcommand{\bari}{\mathrm{bar}}
\newcommand{\XX}{\mathrm{X}}
\title
[Dynamics of screened particles towards equi-spaced ground states]{Dynamics of screened particles towards equi-spaced ground states}
\author[L. De Luca]
{Lucia De Luca}
\address[L. De Luca]{Istituto per le Applicazioni del Calcolo ``M. Picone'', IAC-CNR, Roma (Italy)}
\email[L. De Luca]{lucia.deluca@cnr.it}
\author[M. Goldman]
{Michael Goldman}
\address[M. Goldman]{CMAP, CNRS, \'Ecole Polytechnique, Institut Polytechnique de Paris, 91120 Palaiseau (France)} 
\email[M. Goldman]{michael.goldman@cnrs.fr}
\author[M. Ponsiglione]
{Marcello Ponsiglione}
\address[M. Ponsiglione]{Dipartimento di Matematica ``G. Castelnuovo'', Sapienza Universit\`a di Roma, Roma (Italy)} 
\email[M. Ponsiglione]{ponsigli@mat.uniroma1.it}
\begin{document}
\vskip .2truecm

\begin{abstract}
\small{
This paper deals with the  dynamics - driven by the gradient flow of negative fractional  seminorms - of empirical measures towards equi-spaced ground states. 

Specifically, we consider periodic empirical measures $\mu$ on the real line that are screened by the Lebesgue measure, i.e., with  $\mu-\!\ud x$ having zero average.
To each of these measures $\mu$ we associate a {(periodic)} function $u$ satisfying $u'=\!\ud x - \mu$. For $s\in (0,\frac 12)$ we introduce   energy functionals $\E^s(\mu)$ that can be understood as  the density of the  $s$-Gagliardo seminorm of $u$ per unit length.
Since for  $s\ge \frac 12$, the $s$-Gagliardo seminorms are infinite on functions with jumps, some regularization procedure is needed: For $s\in[\frac 12,1)$ we define $\E_\e^s(\mu):= \E^s(\mu_\e)$, where  $\mu_\e$ is obtained by mollifying $\mu$ on scale $\e$.

We prove that the minimizers of  $\E^s$ and $\E_\e^s$ are the equi-spaced configurations of particles with lattice spacing equal to one. Then, we prove the exponential convergence of the corresponding gradient flows to the  equi-spaced steady states. Finally, although for $s\in[\frac 12 ,1)$ the energy functionals $\E_\ep^s$ blow up as $\e\to 0$, their gradients are uniformly bounded (with respect to $\ep$), so that
the corresponding trajectories converge, as $\ep\to 0$, to  the gradient flow solution of a suitable renormalized energy. 
 

\vskip .3truecm \noindent \textsc{Keywords}: Fractional Seminorms; Periodic Minimizers; Gradient flows
\vskip.1truecm \noindent \textsc{Mathematics Subject Classification}: 
74N15, 
49J45,  
35R11 

}
\end{abstract}
\maketitle

{\small \tableofcontents}

\section*{Introduction}

{Periodic configurations are ubiquitous in nature, crystallization being a paradigmatic example in the physics of solids \cite{BL}. Here we are interested in studying the dynamics - driven by negative fractional semi-norms - towards the equi-spaced ground states for an infinite number of repelling  particles on the whole real line, mimicking charged particles screened  by a uniform field  of opposite charges.}

In our analysis, we bypass any compactness issue or boundary effect by working in a periodic setting;
specifically, given $\Lambda\in\N$, we  consider 
 $\Lambda$-periodic empirical measures $\mu$ on the real line that are screened by the Lebesgue measure, so that $\mu([0,\Lambda))=\Lambda$. 
To each $\mu$ we may associate the potential $u$ defined (up to additive constants) by $u'=\ud x - \mu$; notice that, since the particles are screened, $u$ is $\Lambda$-periodic.  

Given $s\in(0,\frac 12)$ we focus on the energy functionals
\begin{equation}\label{enesubintro}
 \E^{s}(\mu)
:=\frac 1 2\int_{0}^{\Lambda}\ud x \int_{\R}\frac{|u(x)-u(y)|^2}{|x-y|^{1+2s}}\ud y.
\end{equation}
Notice that the first integral is computed on the periodicity interval $(0,\Lambda)$, so that the functional  can be understood as  ($\Lambda$ times) the density of the squared $s$-Gagliardo seminorm of $u$ per unit length. 
We stress that for $s \in [ \frac 1 2,1)$ the energy in \eqref{enesubintro} is infinite for any empirical measure $\mu$. In order to overcome this issue, we adopt a regularization procedure by considering, for every $\ep>0$, the energy functional
\begin{equation}\label{enesubintro2}
\E^s_\ep(\mu):=\E^s(\mu\ast\rho_\ep);
\end{equation}
here $\rho_\ep(\cdot):=\frac{1}{\ep}\rho(\frac \cdot \ep)$ where $\rho$ is a standard  positive mollifier with support in $(-1,1)$.

The first result of the paper is that the global minimizers of the functionals $\E^s(\mu)$ and $\E^s_\ep(\mu)$ are given by the equi-spaced configurations with lattice spacing equal to one. This is a consequence of the fact that the energy functionals are convex with respect to the mutual distances between the particles. More precisely, for $s\in (0,\frac 12)$ the energy functionals are smooth in the space of all configurations without collisions, i.e.,  $\mu$ contains only Diracs of multiplicity one. For such configurations, which we call regular, the second variation of the energy functionals, with respect to the particle positions,  is strictly positive on all directions which are orthogonal to global translations. Notice that for this sub-critical range of the parameter $s$, the energy is  finite (and continuous) on all configurations of particles, but we prove that splitting any cluster of multiple particles decreases the energy, so that the global minimizer is a regular configuration. By convexity such a global minimizer is the unique - up to translations -  critical point of the energy. By symmetry arguments the equi-spaced configuration is a critical point, so that it is the only critical point and it coincides with the ground state of the energy. The situation for the super-critical cases $s\in [\frac 12, 1)$ is very similar, up to the fact that the regularized energy functionals are regular everywhere,
 and that we are able to prove the positiveness of the second variation only for configurations in which the mutual distances between the particles is at least of order $\e$. As a consequence, we still have that the equi-spaced lattice is the only ground state, but we cannot exclude the  presence of high energy stable configurations of closed packets of particles. 

Then, we focus on the dynamics driven by the gradient flow of the energy towards the equi-spaced ground states. Such an analysis
consists in proving that the trajectories of the particles  avoid collisions.  Let us discuss first the sub-critical case $s\in (0,\frac 12)$.
At a first glance, one could think that collisions are local maximum points of the energy, so that they are excluded by the simple fact that the energy decreases in time. Reality is a little more subtle: Since moving a cluster of particles could decrease the energy, there are trajectories where some of the particles collapse and translate remaining stack on each other, and such that the total energy decreases in time;  in order to exclude that these trajectories are solutions of the gradient flow, we have to show that, before the collision, splitting the particles provides a direction of steeper descent for the energy, so that the steepest descent directions of the energy landscape never lead to collisions. The situation is somehow easier in the supercritical case, where collisions cost an amount of energy that blows up as $\e\to 0$.
More precisely, 
Proposition \ref{quasigamma} provides the leading term $\sigma^s(\ep)$ (defined in \eqref{costosigma}) of the blowing up energy $\E_\ep^s$ induced by a single particle as $\e\to 0$.
On the other hand, the energy cost of a singularity with multiplicity $m$ behaves as $m^2\sigma^s(\ep)$, 
so that configurations with multiple singularities are clearly less favorable than regular ones.
 As a consequence, we have that, for any regular initial condition, for $\e$ small enough the trajectories avoid collisions and then converge to a ground state. Again, for initial data with closed packets of particles, we cannot exclude the presence of high energy stationary configurations.    

Finally, we show that in all cases the convergence to the equilibrium is exponentially fast. To this end, we stress that the energy functionals are invariant with respect to global translations  so that the ``periodic barycenter'' (in the sense of definition \eqref{barycenter}) of the evolving configuration remains constant  along the motion. Therefore, the convergence rate is determined by the first eigenvalue of the Hessian of the energy restricted to the space orthogonal to global translations.  

In Propositions \ref{varsecpossub} (for $s\in (0,\frac 1 2]$) and  \ref{varscepossuper} (for $s\in [\frac 1 2, 1)$) we prove that such an eigenvalue is bounded from above by  $\Gamma(s)\Lambda^{-2s}$ for some positive constant $\Gamma(s)$, for the equi-spaced configurations, and, for generic configurations, from below by $\gamma(s)\Lambda^{-2s}$ for some constant $\gamma(s)\le \Gamma(s)$. As a consequence, the - optimal in $\Lambda$ - convergence rate is proportional to  $\Lambda^{-2s}$.




A natural question is what happens (for $s\ge \frac 12$) to the energy functionals $\E_\ep^s$ and to the corresponding gradient flows as $\e\to 0$. 
First,  the  energies  $\W_\ep^s:=\E_\ep^s-\Lambda\sigma^s(\ep)$ converge locally uniformly in the space of regular configurations to a renormalized energy  $\W_0^s$ (as $\ep\to 0$). Moreover, while the energies $\E_\ep^s$ (for $s\in[\frac 1 2,1)$) blow up as $\e\to 0$,  their gradients (namely the slopes) stay locally uniformly bounded with respect to $\e$. As a consequence,  the gradient flows of $\E^s_\e$ (or equivalently of $\W_\ep^s$) converge, as $\e\to 0$, to the solution of  the gradient flow of the {\it renormalized energy} $\W_0^s$.

The case $s=\frac 1 2$ is of particular interest in the literature, since it corresponds to a Dirichlet-like bulk energy induced by a $\Lambda$-periodic distribution of topological singularities lying on a straight line  in the two dimensional plane. 
In this respect,  the energy functional $\W_0^s$ corresponds to the renormalized energy computed in the context of the Ginzburg-Landau vortices;
as shown in \cite{BS}, such a renormalized energy is given, up to additive and multiplicative constants depending only on $\Lambda$, by
$
-\sum_{\lambda\neq\lambda'}\log| \sin(\pi(x_\lambda-x_{\lambda'}))|,
$
and, in view of the convexity of the interaction potentials, its minimizers are the equi-spaced configurations. The renormalized energy has been introduced in several contexts also for infinite non-periodic configurations and it has been shown that the equi-spaced configuration, as well as any compact perturbation of it, is a ground state \cite{BS, L, PS, SS}. In particular, in \cite{SS} the renormalized energy represents the energy per unit length induced by a system of screened  charged particles lying on a straight line in the plane, once the infinite self energy of each particle is removed.
  
For the critical case $s=\frac 12$ the  energy functional in \eqref{enesubintro2} {can be seen as} a ``positive $\e$'' version of the renormalized energy considered in \cite{SS} for a   periodic distribution of screened charges.  Precisely, the potential $u$ {is} the trace of the harmonic conjugate of the potential generated by  $\ud x- \mu$, so that the functional $\E^{\frac{1}{2}}(\mu)$  represents the infinite energy density induced by the electric field generated by the screened particles, while $\E_\e^{\frac{1}{2}}(\mu)$ is its finite counterpart when the particles are diffused on the scale $\e$. More generally, for $s\in(0,1)$, our functionals are closely related to (one-dimensional) Riesz gases which have received considerable attention in the past few years, see \cite{Lewin,Serfatybook}.
In such a generality, we  show that the minimizers are still the equi-spaced configurations  also for $\ep>0$ (see \cite{PS} for the case $\ep=0$) and  that such ground states are attractors for the dynamics.

The emergence of periodic structures as a result of minimization of convex functionals has been much investigated in the last decades. In \cite{M93}
the minimization of the square of the $L^2$ norm has been considered, among functions having two opposite slopes. Such a result has been generalized in \cite{RW} to the case of two, possibly different, slopes. 
The case of fractional $\frac 12$-Gagliardo seminorm has been considered in \cite{GM},  again for functions with equal opposite slope;  their approach relies on a technique referred to as {\it reflection positivity} for which such  a symmetry assumption is somehow required. 
{In the aforementioned results, the functionals under minimization contains also a term penalizing the jumps of the slopes, which is multiplied by a certain (small) parameter. It is such a parameter, say $\delta$, that dictates the periodicity scale, that is proportional to $\delta^{\frac 1 3}$ in the ``local'' case of the $L^2$ norm and to $\delta^{\frac 1 2}$ for the $\frac 1 2$-Gagliardo seminorm.
 We stress that, for the ``local'' $L^2$-energy, the periodicity of minimizers (with the same scale) is proven also when the sharp penalization on the slope-jump is replaced by a Modica-Mortola functional.}

{In this paper we have adopted a more rigid approach: the slopes of the order parameter $u$ are either $1$ or $-\infty$ and, instead of a term penalizing the jumps of the slope, we have assumed that the region where the slope is $-\infty$ is quantized (the Dirac delta's have positive integer weights). 
This is strongly related to the framework considered in \cite{DPS},
} 
where a 
model for misfit dislocations at semi-coherent interfaces has been introduced and the optimality of equi-spaced dislocations has been proved. Since that model was one of the motivations for the present analysis, it is convenient to briefly describe the formalism   in \cite{DPS}.  In turn, the model studied there is motivated by   the modeling of misfit dislocations  by van der  Merwe in \cite{VdM}.  He considered semi-coherent straight interfaces between two  parallel square lattices with different spacing.  In his analysis it is tacitly assumed, as a well understood and unanimously accepted truth (as in the celebrated Read-Shockley paper \cite{RS} for small angle grain boundaries),  that dislocations are periodically distributed along the interface;  the optimal profile of the displacement corresponding to the periodic distribution of dislocations is provided and the corresponding interfacial energy is computed. Since the precise shape of the optimal profile only gives  lower order corrections in   the asymptotics of the energy density as the regularizing parameter $\e\to 0$, in \cite{DPS} a simplified model is introduced, where the transition is prescribed in a simple, non optimal way; namely, considering functions with two given slopes: A ``small'' positive slope of order $1$, accommodating elastically the lattice misfits, and a ``big'' negative slope of order $-1/\e$, providing the transition at the core length scale of the dislocation. Then, the elastic energy density induced by the resulting trace is given, up to pre-factors, by the (square of the) $\frac 12$-fractional seminorm, and the minimality of the equi-spaced configuration of dislocations is proved. A related model for semi-coherent interfaces has previously  been introduced and analyzed by $\Gamma$-convergence in \cite{FPS}; there, only the asymptotic (in the semi-coherent limit) uniform distribution of dislocations is proved, together with  the periodicity for minimizers of a suitable renormalized energy, corresponding to $\e=0$.

The main novelty of our analysis with respect to the results in \cite{DPS} is that  we also consider here the dynamics of misfit dislocations driven by the gradient flow of the induced elastic energy. To this end, it is convenient to replace the large negative slopes
with concentrated negative slopes represented by the empirical measure $\mu$.
On the one hand, this formalism yields after mollification
 merely a different (still non-optimal) profile; on the other hand, it fits naturally within a canonical framework based on the formalism of empirical measures, which is well-suited for studying the dynamics.


\vskip5pt
{\textsc{Acknowledgements:} LDL is a member of the Gruppo Nazionale per l'Analisi Matematica, la Probabilit\`a e le loro Applicazioni (GNAMPA) of the Istituto Nazionale di Alta Matematica (INdAM).
 
 LDL acknowledges the financial support of PRIN 2022HKBF5C ``Variational Analysis of complex systems in Materials Science, Physics and Biology'', PNRR Italia Domani, funded by the European Union via the program NextGenerationEU, CUP B53D23009290006.
 
MP acknowledges the financial support of PRIN 2022J4FYNJ  ``Variational methods for
stationary and evolution problems with singularities and interfaces", PNRR Italia Domani,
funded by the European Union via the program NextGenerationEU, CUP B53D23009320006.

Views and opinions expressed are however those of the authors only and do not necessarily reflect those of the European Union or The European Research Executive Agency. Neither the European Union nor the granting authority can be held responsible for them.

MG thanks the Sapienza University for its hospitality and support through a Visiting Professor position. Part of the research of MG was supported by the ANR STOIQUES }

%

\section{Setting of the problem}\label{Suno}
In this section we introduce the fractional seminorm  energies and the class of admissible functions we will deal with. 

\subsection{Function spaces}  
Let $\Lambda \in\N$ be fixed.
We set
$$
\X^{\Lambda}:= [0,\Lambda)^\Lambda.
$$
Notice that the entries  of a generic configuration $X\in\X^{\Lambda}$ are not required to be distinct.
For every $X=(x_1,\ldots,x_\Lambda)\in \X^{\Lambda}$ we will denote by $X^\ord =(x^\ord_1,\ldots,x^\ord_\Lambda) $ the {\it non-decreasing reordering} of $X$, namely the vector in $\X^\Lambda$ obtained permuting the entries of $X$ is such a way that $x^\ord_\lambda\le x^\ord_{\lambda+1}$ for every $\lambda=1,\ldots,\Lambda-1$.
 
Let $X\in\X^\Lambda$. We define $\sss(X):=\{\xi \in[0,\Lambda)\,:\,x_\lambda = \xi \textrm{ for some }\lambda=1,\ldots,\Lambda\}$.
To every $\xi\in\sss(X)$ we associate the set $\inde(\xi)$ of the indices $\lambda\in\{1,\ldots,\Lambda\}$ such that $x_{\lambda}=\xi$ and we define the 
 multiplicity $\mm(\xi)$ of $\xi$ as $\mm(\xi):=\sharp\inde(\xi)$.
 Notice that if $\mm(\xi)=1$, then $\inde(\xi)$ is made of one element, that will be denoted by $\lambda(\xi)$.
Moreover, if $X=X^{\ord}$, to any $\xi\in\sss(X)$ we can
 associate
the first index $\phi(\xi)$ for which $\xi$ is an entry of $X$, i.e.,
$\phi(\xi):=\min\{\lambda=1,\ldots,\Lambda\,:\,x_{\lambda}=\xi\}$;
notice that, in such a case,
$\mm(\xi)=\max\{m=1,\ldots,\Lambda\,:\, x_{\phi(\xi)+m-1}=\xi\}$.

We define the set of regular configurations $\rs^\Lambda$ as 
$$\rs^\Lambda:=\{X\in\X^\Lambda\,:\,\mm(\xi)=1\textrm{ for every }\xi\in\sss(X)\}.
$$
 Notice that $X\in\rs^\Lambda$ if and only if $\sharp \sss(X) = \Lambda$.

\vskip3pt
For every $X=(x_1,\ldots, x_\Lambda)\in\X^\Lambda$  we denote by $\XX$ its {\it $\Lambda$-periodic extension}, namely the $\Lambda$-periodic sequence $\XX:= \{\mathsf x_z\}_{z\in\Z}$  with $\mathsf x_z=  x_z$ for  $z=1, \ldots, \Lambda $.

We denote by $\{e_{\lambda}\}_{\lambda=1,\ldots,\Lambda}$ the canonical basis of $\R^\Lambda$ and we set $e:=\Lambda^{-\frac 1 2}\sum_{\lambda=1}^{\Lambda}e_\lambda$.

Moreover, for all $\tau\in \R$ we denote by $X+\tau$  the element of $\X^\Lambda$ whose $\Lambda$-periodic extension is $\XX + \tau$. 

We denote by  $\mu^{X}:=\sum_{z \in \Z} \delta_{\mathsf x_z}$ the empirical measure associated to the configuration $X$;
notice that $\mu^X\equiv\sum_{\xi\in\sss(X)}\mu^\xi$, where,
for every $\xi\in\sss(X)$, we have set
\begin{equation}\label{misurasing}
\mu^\xi:=\widehat{m}(\xi)\sum_{z\in\Z}\delta_{\xi+\Lambda z}.
\end{equation}
Finally, for every $X\in\X^\Lambda$, we
 introduce  the family $\AS(X)$ of functions {\it compatible} with $X$ as 
$$\AS(X):=\{u\in  BV_{\rm loc}(\R) \, :\,u'=\ud x- \mu^X\},$$
where $u'$  denotes the distributional derivative of $u$ and $\ud x$ is the standard Lebesgue measure.
By construction, the set  $\AS(X)$ contains functions differing just by additive constants. Since the energy functionals we  consider are insensitive to (horizontal and) vertical translations, with a little abuse of notation, we will denote by $u^X$ the ``unique up to additive constants'' element of $\AS(X)$.
Trivially, $u^X\equiv u^{X^\ord}$.

%
%
\subsection{The energy functionals}
 We introduce here the energy functionals that  will be  used throughout the paper.

{\bf The subcritical case $0<s<\frac 1 2$.}
For every $X\in\X^{\Lambda}$, we set
\begin{equation}\label{enesubcrit}
\E^{s}(X)
:=\frac 1 2\int_{0}^{\Lambda}\ud x\int_{\R}\frac{|u^X(x)-u^X(y)|^2}{|x-y|^{1+2s}}\ud y.
\end{equation}
Notice that the energy $\E^s$ is insensitive to global translations, i.e.,  for every $\tau\in\R$,
\begin{equation}\label{invariant}
\E^s (X+\tau )=\E^s(X);
\end{equation}
therefore, whenever it will be convenient, we will  assume without loss of generality  that $x_\lambda>0$ for every $\lambda=1,\ldots,\Lambda$.

For every $x\in\R$ we set $\bar u^X (x) := \frac{(u^X)^+(x) + (u^X)^-(x)}{2}$, where $(u^X)^\pm(x)$ denote the traces of the function $u^X$ at $x$, so that $\bar u^X$ coincides with $u^X$ on each of its continuity point. Then, we define
\begin{equation}\label{laplafra}
(-\Delta)^s u^X(x):=2\int_{\R}\frac{\bar u^X(x)-u^X(y)}{|x-y|^{1+2s}}\ud y,
\end{equation}
where the integral is intended in the sense of principal value if $x$ is a jump point for $u^X$.


\vskip5pt
{\bf The critical and supercritical case $\frac 1 2\le s<1$.}
Let $\rho$ be a standard mollifier supported in $(-1,1)$
 and, for any $\ep>0$, let $\rho_\ep(\cdot):=\frac{1}{\ep}\rho(\frac{\cdot}{\ep})$.
For every Radon measure $\nu$ on $\R$ we set $\nu_\e:= \rho_\e\ast\nu$. The notation $\nu_{\e\e}$ stands for $\rho_\e\ast \nu_\e = \rho_\e \ast (\rho_\e \ast \nu)$.
For every  $X\in\X^{\Lambda}$  we define
\begin{equation}\label{enesupercrit}
\E_\ep^{s}(X):=\frac 1 2\int_{0}^{\Lambda}\ud x\int_{\R}\frac{|u_\ep^X(x)-u_\ep^X(y)|^2}{|x-y|^{1+2s}}\ud y.
\end{equation}
We notice that for all $X\in \X^\Lambda$ the function $u_\ep^X$ is smooth and bounded, so that its $s$-fractional Laplacian
$\displaystyle (-\Delta)^s u_\ep^X(x):=2\int_{\R}\frac{u_\ep^X(x)-u_\ep^X(y)}{|x-y|^{1+2s}}\ud y$ is well defined in the sense of principal value.

\begin{remark}\label{bendefi}
\rm{
Notice that for every $X\in\X^\Lambda$ the operator in \eqref{laplafra} is well defined, in the sense of principal value, also for $\frac 1 2\le s<1$. Moreover, we have that 
\begin{equation}\label{convelapla}
\lim_{\ep\to 0}(-\Delta)^su^X_\ep(x)=(-\Delta)^su^X(x)
\qquad\textrm{for every }x\in\R.
\end{equation}
Finally, for any open set $U\subset\subset \rs^{\Lambda}$ (so that the minimal distance between two particles is uniformly bounded from below) we have that the convergence in \eqref{convelapla} at any $x\in\sss(X)$ is uniform in $U$.
}
\end{remark}
\section{First and second variations}
In this section we compute first and second variations of the energy functionals defined in \eqref{enesubcrit} and \eqref{enesupercrit}. 
For every $r>0$ and $x\in\R$ we denote by $B_r(x)=(x-r,x+r)$ the open interval centered at $x$ and having radius $r$; moreover, we set $B_r:=B_r(0)$.
Furthermore, for every $0<r<R$ and $x\in\R$ we set $A_{r,R}(x):=B_R(x)\setminus \overline{B}_r(x)$ and $A_{r,R}:=A_{r,R}(0)$. 
\subsection{The subcritical case $0<s<\frac 1 2$}
We start by computing the first and second variations of the energy $\E^s$ on regular configurations.
\begin{proposition}\label{prop:firstvarsub}
Let $0<s<\frac 1 2$ and let $X\in\rs^{\Lambda}$.
Let $\xi,\eta\in\sss(X)$ with $\xi,\eta>0$ and $\xi\neq \eta$.
Then
\begin{eqnarray}\label{prima}
{\partial_{x_{\lambda(\xi)}}} \E^{s}(X) &=& (-\Delta)^s u^X(\xi),\\ \label{seconda}
{\partial^2_{x^2_{\lambda(\xi)}}}\E^{s}(X) &=& 
\sum_{z\in\Z}\sum_{\xi'\in\sss(X)\setminus\{\xi\}}\frac{2}{|\Lambda z+\xi-\xi'|^{1+2s}},\\
\label{terza}
{\partial^2_{x_{\lambda(\xi)}\,x_{\lambda(\eta)}}}\E^{s}(X) &=& -
\sum_{z\in\Z}\frac{2}{|\Lambda z+\xi-\eta|^{1+2s}}.
\end{eqnarray}
\end{proposition}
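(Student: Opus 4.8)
The computation \eqref{prima} is the crucial and most delicate step; once it is available, the second variations \eqref{seconda} and \eqref{terza} follow by differentiating the identity $\partial_{x_{\lambda(\xi)}}\E^s(X)=(-\Delta)^su^X(\xi)$ one more time, the off-diagonal entry \eqref{terza} directly and the diagonal entry \eqref{seconda} through the translation invariance \eqref{invariant}. Throughout I use that on $\rs^\Lambda$ the potential $u^X$ is the $\Lambda$-periodic sawtooth of slope $1$ dropping by $1$ at each point $\xi+\Lambda z$, and that displacing the particle $\xi\mapsto\xi+t$ together with all its periodic copies turns $u^X$ into $u^X+v_t$, where $v_t:=\sum_{z\in\Z}\mathbf 1_{(\xi+\Lambda z,\,\xi+t+\Lambda z)}$ for $t>0$ (and the analogous negative bump for $t<0$); additive constants are irrelevant since the energy \eqref{enesubcrit} depends only on the differences $u^X(x)-u^X(y)$. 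Writing $\E^s(X_t)=\E^s(X)+\langle u^X,v_t\rangle+\tfrac12\langle v_t,v_t\rangle$ for the bilinear form $\langle f,g\rangle:=\int_0^\Lambda\!\int_\R\frac{(f(x)-f(y))(g(x)-g(y))}{|x-y|^{1+2s}}\,dy\,dx$, the subtle point is that the energy is \emph{not} Fréchet differentiable: by scaling the self-energy $\tfrac12\langle v_t,v_t\rangle$ of a bump of width $|t|$ is of order $|t|^{1-2s}\gg|t|$ (finite precisely because $s<\tfrac12$), and since $(-\Delta)^su^X$ blows up like $|x-\xi|^{-2s}$ at the jump, the cross term $\langle u^X,v_t\rangle$ carries a singular contribution of the same order $|t|^{1-2s}$. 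The heart of the matter is that these two singular contributions cancel exactly, leaving a genuinely linear remainder equal to $(-\Delta)^su^X(\xi)$, the fractional Laplacian at the jump being read in the principal value sense with the averaged trace $\bar u^X(\xi)$ as in \eqref{laplafra} — the symmetric (principal value) structure being exactly what produces the average of the two traces. The cleaner route I would actually follow is to use Remark \ref{bendefi}: one computes the variation for the \emph{smooth} potentials $u^X_\e=u^X\ast\rho_\e$, where $\partial_{x_{\lambda(\xi)}}u^X_\e=\sum_z\rho_\e(\cdot-\xi-\Lambda z)$ and the differentiation is elementary, giving $\partial_{x_{\lambda(\xi)}}\E^s_\e(X)=\int_0^\Lambda(-\Delta)^su^X_\e(x)\,\rho_\e(x-\xi)\,dx$, and then passes to the limit $\e\to0$ via the locally uniform convergence $(-\Delta)^su^X_\e\to(-\Delta)^su^X$ on $\rs^\Lambda$ together with the $C^1_\loc$ convergence $\E^s_\e\to\E^s$. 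This limiting step, i.e. controlling everything near the jump and recovering the averaged trace, is the main obstacle.

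\textbf{Off-diagonal second variation.} Here I differentiate \eqref{prima}, written as $2\,\mathrm{p.v.}\!\int_\R\frac{\bar u^X(\xi)-u^X(y)}{|\xi-y|^{1+2s}}\,dy$, with respect to $x_{\lambda(\eta)}$. Displacing $\eta\mapsto\eta+t$ replaces $u^X$ by $u^X+w_t$ with $w_t:=\sum_z\mathbf 1_{(\eta+\Lambda z,\,\eta+t+\Lambda z)}$; since $\xi\neq\eta$, for $|t|$ small the support of $w_t$ stays away from $\xi$, so $\bar u^X(\xi)$ is unchanged and the only effect is the regular (non-principal-value) term $-2\int_\R\frac{w_t(y)}{|\xi-y|^{1+2s}}\,dy=-2\sum_z\int_{\eta+\Lambda z}^{\eta+t+\Lambda z}\frac{dy}{|\xi-y|^{1+2s}}$. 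Differentiating in $t$ at $t=0$ picks up the endpoints and gives $-2\sum_z|\xi-\eta-\Lambda z|^{-(1+2s)}$, which after relabeling $z\mapsto-z$ is exactly \eqref{terza}. The absence of the jump difficulty here is what makes this computation routine.

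\textbf{Diagonal second variation.} Now the evaluation point and the perturbation coincide, so the moving-jump difficulty of the first variation would reappear; I sidestep it through \eqref{invariant}. Differentiating the identity $\E^s(X+\tau)=\E^s(X)$ in $\tau$ at $\tau=0$ yields $\sum_{\mu=1}^\Lambda\partial_{x_\mu}\E^s\equiv0$ on $\rs^\Lambda$, and differentiating this identity once more in $x_{\lambda(\xi)}$ gives $\partial^2_{x^2_{\lambda(\xi)}}\E^s(X)=-\sum_{\xi'\in\sss(X)\setminus\{\xi\}}\partial_{x_{\lambda(\xi)}}\big[(-\Delta)^su^X(\xi')\big]$. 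Each summand involves a \emph{fixed} evaluation point $\xi'\neq\xi$ and a moving particle $\xi$, hence is computable exactly as in the previous paragraph (with the roles of the two particles exchanged), giving $-\sum_z 2\,|\Lambda z+\xi-\xi'|^{-(1+2s)}$; summing over $\xi'$ produces the positive double sum \eqref{seconda}. The only auxiliary point to verify is that the gradient is $C^1$ on $\rs^\Lambda$, so that all these differentiations are licit, which once more follows from the mollification argument underlying Remark \ref{bendefi}.
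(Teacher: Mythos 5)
Your treatment of the two second variations is correct and coincides with the paper's. For \eqref{terza} the paper does exactly your computation: $(-\Delta)^s u^{X+he_{\lambda(\eta)}}=(-\Delta)^s u^X+(-\Delta)^s\chi_{E_h}$ with $E_h$ the periodic bump at $\eta$, whose support avoids $\xi$, followed by differentiation of the endpoint integral. For \eqref{seconda} the paper also exploits the translation invariance \eqref{invariant}, merely packaged differently: it replaces the increment $X+he_{\lambda(\xi)}$ evaluated at $\xi+h$ by $X-h\hat e_{\lambda(\xi)}$ evaluated at the fixed point $\xi$ (where $\hat e_{\lambda(\xi)}$ moves all the \emph{other} particles), which is your row-sum identity $\partial^2_{x^2_{\lambda(\xi)}}\E^s=-\sum_{\xi'\neq\xi}\partial_{x_{\lambda(\xi)}}\big[(-\Delta)^s u^X(\xi')\big]$ carried out in a single step; both versions reduce the diagonal entry to off-diagonal-type computations with a fixed evaluation point, and your justification (continuity of the partials from \eqref{prima} so that the differentiations are licit) is the right auxiliary point.

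The gap is in \eqref{prima}, which you correctly identify as the crux but do not prove on either of your two routes. On the direct route you assert that the two $|t|^{1-2s}$ contributions ``cancel exactly'' without verifying it, and this assertion is where the whole content of the proposition lives, including the emergence of the averaged trace $\bar u^X$. The paper executes precisely this route, and the point is that the cancellation is an exact algebraic identity at finite $h$, not an asymptotic matching: after the difference-of-squares and the periodicity/Fubini symmetrization in \eqref{fvar1}, the increment equals
\begin{equation*}
2\int_{I_h}\ud x\int_{\R\setminus\overline I_h}\frac{\big(u^X(x)+\tfrac12\big)-\big(u^X(y)+\tfrac12\chi_{A_h}(y)\big)}{|x-y|^{1+2s}}\ud y,
\end{equation*}
and since $u^X$ is an exact unit-slope sawtooth with a unit jump at $\xi$, the integrand equals $x-y+\tfrac12$ for $y$ in the window to the right of the bump and $x-y-\tfrac12$ to the left, so the contribution of the symmetric window $B_\sigma(\xi^h)\setminus\overline I_h$ vanishes identically (\eqref{vienezero} and \eqref{fvar2}); the shift by $\tfrac12$ \emph{is} the averaged trace, and no singular term survives to be cancelled in the limit. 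What remains is continuous in $(x,y)$, and \eqref{fvar4} concludes by dominated convergence, with $\sigma\to0$ producing the principal value in \eqref{laplafra}. Your alternative mollification route is not circular and could be closed, but it transfers the entire difficulty into two statements you leave unproven: the smooth first-variation formula $\partial_{x_{\lambda(\xi)}}\E^s_\ep(X)=(-\Delta)^s u^X_{\ep\ep}(\xi)$ for $s<\tfrac12$ (this is \eqref{primasuper}, whose proof in the paper is in fact valid for all $s\in(0,1)$), and, more seriously, the locally uniform convergence $(-\Delta)^s u^X_{\ep\ep}\to(-\Delta)^s u^X$ at the particle — exactly the near-jump analysis recovering the averaged trace, which moreover requires the symmetry of the mollifier $\rho$. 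The paper states this convergence only as the unproven Remark \ref{bendefi} and deliberately does not found the subcritical proof on it. Since you yourself flag this limiting step as ``the main obstacle,'' your first-variation argument stands as a correct diagnosis of the mechanism together with an unexecuted computation, rather than a proof; the missing half page is the explicit exploitation of the sawtooth structure (slope exactly $1$, jumps exactly $1$) that makes the cancellation exact.
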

\begin{proof}
We first prove \eqref{prima}. Let $h\neq 0$ and let  $I_{h}$ be the (for instance open) interval with extreme points $\xi$ and $\xi+h$. We set $A_{h}:=I_{h}+\Lambda\Z$.
By construction, for $|h|$ small enough, $X+h e_{\lambda(\xi)}\in\X^\Lambda$
and
 $u^{X+h e_{\lambda(\xi)}} =u^X + \text{sgn}(h) \chi_{A_h}$.
To simplify notation we consider only the case $h>0$. By direct computations
\begin{equation}\label{fvar1}
\begin{aligned}
&\E^s(X+he_{\lambda(\xi)})-\E^s(X)\\
=&\,\frac 1 2\int_{0}^\Lambda\ud x\int_{\R}\frac{(\chi_{A_{h}}(x)-\chi_{A_h}(y))\big((2u^X(x)+\chi_{A_{h}}(x))-(2u^{X}(y)+\chi_{A_{h}}(y))\big)}{|x-y|^{1+2s}}\ud y\\
=&\,\int_{0}^\Lambda\ud x\int_{\R}\frac{\chi_{A_{h}}(x)\big((2u^X(x)+\chi_{A_{h}}(x))-(2u^{X}(y)+\chi_{A_{h}}(y))\big)}{|x-y|^{1+2s}}\ud y\\
=&\,2\int_{I_h} \ud x
\int_{\R}\frac{( u^X(x)+\frac 12)-(u^{X}(y)+ \frac 12 \chi_{A_{h}}(y))}{|x-y|^{1+2s}}\ud y\\
=&\,2\int_{I_h} \ud x
\int_{\R\setminus \overline I_h}\frac{( u^X(x)+\frac 12)-(u^{X}(y)+ \frac 12 \chi_{A_{h}}(y))}{|x-y|^{1+2s}}\ud y\, .
\end{aligned}
\end{equation}
The first equality is nothing but  computing the difference of squares. The second  equality is a consequence of  periodicity, using the change of variable $x\mapsto x-\Lambda z$, $y\mapsto y-\Lambda z$, together with Fubini Theorem as follows
\begin{equation*}
\begin{aligned}
&\,\frac 1 2\int_{0}^\Lambda\ud x\int_{\R}\frac{-\chi_{A_h}(y)\big((2u^X(x)+\chi_{A_{h}}(x))-(2u^{X}(y)+\chi_{A_{h}}(y))\big)}{|x-y|^{1+2s}}\ud y\\
=&\,\frac 1 2\int_{0}^\Lambda\ud x\int_{\R}\frac{\chi_{A_h}(y)\big((2u^X(y)+\chi_{A_{h}}(y))-(2u^{X}(x)+\chi_{A_{h}}(x))\big)}{|x-y|^{1+2s}}\ud y\\
=&\,\sum_{z\in\Z}\frac 1 2\int_{0}^\Lambda\ud x\int_{\Lambda z}^{\Lambda(z+1)}\frac{\chi_{A_h}(y)\big((2u^X(y)+\chi_{A_{h}}(y))-(2u^{X}(x)+\chi_{A_{h}}(x))\big)}{|x-y|^{1+2s}}\ud y\\
=&\,\sum_{z\in\Z}\frac 1 2\int_{-\Lambda z}^{\Lambda(1-z)}\ud x\int_{0}^{\Lambda}\frac{\chi_{A_h}(y)\big((2u^X(y)+\chi_{A_{h}}(y))-(2u^{X}(x)+\chi_{A_{h}}(x))\big)}{|x-y|^{1+2s}}\ud y\\
=&\,\frac 1 2\int_{0}^{\Lambda} \ud y\int_{\R}\frac{\chi_{A_h}(y)\big((2u^X(y)+\chi_{A_{h}}(y))-(2u^{X}(x)+\chi_{A_{h}}(x))\big)}{|x-y|^{1+2s}}\ud x\, .
\end{aligned}
\end{equation*}
Finally, the third equality  in \eqref{fvar1} is immediate, while the last one follows from the fact that
\begin{equation}\label{vienezero}
\begin{aligned}
&\,\int_{I_h} \ud x
\int_{I_h}\frac{( u^X(x)+\frac 12)-(u^{X}(y)+ \frac 12 \chi_{A_{h}}(y))}{|x-y|^{1+2s}}\ud y\\
=&\,\int_{I_h} \ud x
\int_{I_h}\frac{u^X(x)-u^{X}(y)}{|x-y|^{1+2s}}\ud y
=\int_{I_h} \ud x
\int_{I_h}\frac{x-y}{|x-y|^{1+2s}}\ud y=0.
\end{aligned}
\end{equation}
We set $\xi^h:=\xi+\frac h 2$, so that $I_h=B_{\frac{h}{2}}(\xi^h)$. Let $0<\sigma<\min_{\xi'\in\sss(X)\setminus\{\xi\}}|\xi-\xi'|$ and notice that, for $h$ small enough
\begin{eqnarray*}
u^X(x)-u^{X}(y)=&\,x-y&\qquad\textrm{if }x\in I_h,\,\,y\in\Big(\xi_h+\frac h 2, \xi_h+\sigma\Big),\\
u^X(x)-u^{X}(y)=&\,x-y-1&\qquad\textrm{if }x\in I_h,\,\,y\in\Big(\xi_h-\sigma, \xi_h-\frac h 2\Big).
\end{eqnarray*}
Whence we deduce
\begin{equation}\label{fvar2}
\begin{aligned}
&\int_{I_h} \ud x\int_{B_{\sigma}(\xi^h)\setminus \overline I_h  }\frac{( u^X(x)+\frac 12)-(u^{X}(y)+ \frac 12 \chi_{A_{h}}(y))}{|x-y|^{1+2s}}\ud y\\
=&\,\int_{B_{\frac h 2}(\xi^h)}\ud x\int_{B_{\sigma}(\xi^h)\setminus \overline B_{\frac h 2}(\xi^h)}\frac{ u^X(x)+\frac 12-u^{X}(y)}{|x-y|^{1+2s}}\ud y=0.
\end{aligned}
\end{equation}
Therefore, in view of \eqref{fvar1} and \eqref{fvar2}, we deduce
\begin{equation}\label{fvar3}
\begin{aligned}
&\E^s(X+he_{\lambda(\xi)})-\E^s(X)\\
=&\,2\int_{B_{\frac h 2}(\xi^h)}\ud x\int_{\R\setminus \overline B_{\sigma}(\xi^h)}\frac{( u^X(x)+\frac 12)-(u^{X}(y)+ \frac 12 \chi_{A_{h}}(y))}{|x-y|^{1+2s}}\ud y\\
=&\,2\int_{B_{\frac h 2}(\xi^h)}\ud x\int_{\R\setminus \overline B_{\sigma}(\xi^h)}\frac{u^X(x)+\frac 12-u^{X}(y)}{|x-y|^{1+2s}}\ud y +\mathrm{r}(h)\,,
\end{aligned}
\end{equation}
where (for $h$ small enough)
\begin{equation}\label{perfvar3}
\begin{aligned}
r(h) : = &\,\int_{B_{\frac h 2}(\xi^h)}\ud x\int_{\R\setminus \overline B_{\sigma}(\xi^h)}\frac{\chi_{A_{h}}(y)}{|x-y|^{1+2s}}\ud y\\
&=\,
\int_{B_{\frac h 2}(\xi^h)}\ud x\sum_{z\in\Z\setminus\{0\}}\int_{B_{\frac h 2}(\xi^h)+\Lambda z}\frac{1}{|x-y|^{1+2s}}\ud y\le h^2\sum_{z\in\Z\setminus\{0\}}\Big(\frac{2}{\Lambda |z|}\Big)^{1+2s}.
\end{aligned}
\end{equation}
Let  us now consider the change of variable $x'= \frac{x-\xi^h}{h}$ so that $x=\xi^h+h x'$. By \eqref{fvar3} and \eqref{perfvar3}, using the Dominated Convergence Theorem, we get
\begin{equation}\label{fvar4}
\begin{aligned}
&\lim_{h\to 0}\frac{\E^s(X+he_{\lambda(\xi)})-\E^s(X)}{h}\\
=&\,\lim_{h\to 0}\Big[2\int_{-\frac{1}{2}}^{\frac 1 2}\ud x'\int_{\R\setminus \overline B_{\sigma}(\xi^h)}\frac{u^X(\xi^h+h x')+\frac 12-u^{X}(y)}{|\xi^h+h x' -y|^{1+2s}}\ud y +\frac{r(h)}{h}\Big]\\
=&\,2\int_{-\frac{1}{2}}^{\frac 1 2}\ud x'\int_{\R\setminus \overline B_{\sigma}(\xi)}\frac{u^X(\xi)+\frac 12-u^{X}(y)}{|\xi-y|^{1+2s}}\ud y ,
\end{aligned}
\end{equation}
which, by the arbitrariness of $\sigma$, and by the very definition in \eqref{laplafra}, implies \eqref{prima}.

Now we prove \eqref{seconda}. By \eqref{prima}, using the invariance by global translations, we have
\begin{equation}\label{seconda0}
\begin{aligned}
\partial^2_{x^2_{\lambda(\xi)}}\E^s(X)=&\,\lim_{h\to 0}\frac{(-\Delta)^su^{X+he_{\lambda(\xi)}}(\xi + h)-(-\Delta)^su^{X}(\xi)}{h}\\
=&\,\lim_{h\to 0}\frac{(-\Delta)^su^{X-h\hat e_{\lambda(\xi)}}(\xi)-(-\Delta)^su^{X}(\xi)}{h},
\end{aligned}
\end{equation}
 where $\hat e_{\lambda(\xi)}$ denotes the vector with the $\lambda(\xi)$-th entry equal to $0$ and all the remaining ones equal to $1$. 
To simplify notation we consider only the case $h>0$ and 
we set $\hat I_h:=\bigcup_{\xi'\in\sss(X)\setminus\{\xi\}}(\xi'-h,\xi')$ and
$\hat A_h:= \hat I_h+\Lambda\Z$.
Then, for $h$ small enough, $u^{X-h\hat e_{\lambda(\xi)}}\equiv u^X-\chi_{\hat A_h}$.
By the very definition of fractional Laplacian in \eqref{laplafra} and by the Fundamental Theorem of Calculus, we get
\begin{equation*}
\begin{aligned}
\lim_{h\to 0}\frac{(-\Delta)^su^{X-h\hat e_{\lambda(\xi)}}(\xi)-(-\Delta)^su^{X}(\xi)}{h}=&\,\lim_{h\to 0}\frac{2}{h}\int_{\hat A_h}\frac{1}{|\xi-y|^{1+2s}}\ud y\\
=&\,\sum_{z\in\Z}\sum_{\xi'\in\sss(X)\setminus\{\xi\}}\frac{2}{|\Lambda z+\xi-\xi'|^{1+2s}},
\end{aligned}
\end{equation*}
which, together with \eqref{seconda0}, yields \eqref{seconda}.\\
We finally turn to \eqref{terza}. 
By \eqref{prima}, we have
\begin{equation}\label{terza0}
 {\partial^2_{x_{\lambda(\xi)}\,x_{\lambda(\eta)}}}\E^{s}(X) = \lim_{h\to 0} \frac{(-\Delta)^s u^{X+h e_\lambda(\eta)}(\xi)- (-\Delta)^s u^{X}(\xi)}{h}.
\end{equation}
To simplify notation we focus on the case $h>0$; for $h$ small enough, 
$$
(-\Delta)^s u^{X+h e_\lambda(\eta)}=(-\Delta)^s u^{X}+(-\Delta)^s \chi_{E_h},
$$ 
where we have set $E_h:=\cup_{z\in \Z} (\eta+\Lambda z,\eta+\Lambda z+h)$.  Therefore, by \eqref{terza0}, we obtain
\begin{equation*}
 {\partial^2_{x_{\lambda(\xi)}\,x_{\lambda(\eta)}}}\E^{s}(X) 
 = \lim_{h\to 0} \frac{(-\Delta)^s \chi_{E_h}(\xi)}{h}
=-2 \lim_{h\to 0} \frac{1}{h}\int_{E_h} \frac{1}{|\xi-y|^{1+2s}}\ud y=-\sum_{z\in\Z}\frac{2}{|\Lambda z+\xi-\eta|^{1+2s}}.
\end{equation*}
This concludes the proof of \eqref{terza}.
\end{proof}
In the next proposition, we prove that the Hessian $\nabla^2\E^s$ is positive definite on $\rs^\Lambda$, providing a lower bound on its first eigenvalue.
 
We define the class $\cs^\Lambda$ of equispaced configurations as
\begin{equation}\label{crit}
\cs^\Lambda:=\{X\in\rs^\Lambda\,:\,\sss(X)=\{\tau,1+\tau,\ldots,\Lambda-1+\tau\}\,, \quad\tau\in[0,1)\}.
\end{equation}
\begin{proposition}\label{varsecpossub}
Let $0<s<\frac 1 2$ and $\Lambda\in\N$. Then, for every $X\in\rs^\Lambda$ with $\sss(X)\subset(0,\Lambda)$ and for every $h\in\R^\Lambda$
\begin{equation}\label{posvarsecsub}
\langle\nabla^2\E^s(X)h,h\rangle=\sum_{\newatop{\xi,\eta\in\sss(X)}{\xi\neq\eta}}a^s(\xi,\eta)(h_{\lambda(\xi)}-h_{\lambda(\eta)})^2,
\end{equation}
where
\begin{equation}\label{coeffa}
a^s(\xi,\eta):=2\sum_{z\in\Z}\frac{1}{|\xi-\eta+\Lambda z|^{1+2s}}\qquad\textrm{for every }\xi,\eta\in \sss(X) \textrm{ with }\xi\neq\eta.
\end{equation}
As a consequence, if $h\cdot e=\sum_{\lambda=1}^{\Lambda}h_\lambda=0$, then
\begin{equation}\label{stimasubpeggio}
\langle\nabla^2\E^s(X)h,h\rangle\ge \gamma(s)\Lambda^{-2s}|h|^2,
\end{equation}
for some positive constant $\gamma(s)$ depending only on $s$.

Moreover,
 for every $X\in\cs^{\Lambda}$, there exists $h\in\R^\Lambda$ with $h\neq0$, $h\cdot e=0$ and
\begin{equation}\label{stimasubmeglio}
\langle\nabla^2\E^s(X)h,h\rangle\le \Gamma(s)\Lambda^{-2s}|h|^2,
\end{equation}
for some positive constant $\Gamma(s)$ depending only on $s$.

\end{proposition}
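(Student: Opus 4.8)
The plan is to read the Hessian directly off Proposition \ref{prop:firstvarsub} and treat it as a weighted graph Laplacian, then establish the three assertions in turn. By \eqref{seconda}--\eqref{terza}, in the coordinates indexed by $\lambda(\xi)$ the matrix $\nabla^2\E^s(X)$ has diagonal entries $\sum_{\xi'\neq\xi}a^s(\xi,\xi')$ and off-diagonal entries $-a^s(\xi,\eta)$, with $a^s$ the symmetric positive weights \eqref{coeffa}. This is exactly the Laplacian of the complete weighted graph on the vertex set $\sss(X)$. I would expand $\langle\nabla^2\E^s(X)h,h\rangle$ and regroup the diagonal and off-diagonal contributions using $a^s(\xi,\eta)=a^s(\eta,\xi)$; the standard polarization identity for graph Laplacians then produces the weighted sum of squared increments in \eqref{posvarsecsub}. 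Since all weights are strictly positive and the graph is connected, the form vanishes only on the constant direction $e$, which confirms positive definiteness on $\{h\cdot e=0\}$.

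For the lower bound \eqref{stimasubpeggio}, the key point is a uniform lower bound on the weights. The map $t\mapsto\sum_{z\in\Z}|t+\Lambda z|^{-(1+2s)}$ is $\Lambda$-periodic and, on $(0,\Lambda)$, a sum of convex functions, hence convex and symmetric about $\Lambda/2$; it therefore attains its minimum at $t=\Lambda/2$. Evaluating there yields $a^s(\xi,\eta)\ge c(s)\Lambda^{-(1+2s)}$ for all distinct $\xi,\eta$, with $c(s)>0$ depending only on $s$. Bounding every (nonnegative) increment weight in \eqref{posvarsecsub} from below by $c(s)\Lambda^{-(1+2s)}$ reduces the quadratic form to $c(s)\Lambda^{-(1+2s)}$ times the Dirichlet form of the unit-weight complete-graph Laplacian, whose restriction to $\{\sum_\lambda h_\lambda=0\}$ equals $\Lambda|h|^2$ (because $\sum_{\xi\neq\eta}(h_{\lambda(\xi)}-h_{\lambda(\eta)})^2=\Lambda|h|^2-(\sum_\lambda h_\lambda)^2$, up to the fixed combinatorial constant in \eqref{posvarsecsub}). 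This gives \eqref{stimasubpeggio} with $\gamma(s)=c(s)$, uniformly over all regular $X$, since the comparison uses only the worst-case weight.

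For the upper bound \eqref{stimasubmeglio}, note that for $X\in\cs^\Lambda$ the positions are equispaced, so $a^s(\xi_j,\xi_k)$ depends only on $j-k$ modulo $\Lambda$ and $\nabla^2\E^s(X)$ is circulant, diagonalized by the Fourier basis $v^{(p)}_\lambda=e^{2\pi\mathrm{i}p\lambda/\Lambda}$. I would take the first nonconstant mode $p=1$ (its real part, say), which is nonzero for $\Lambda\ge2$ and orthogonal to $e$. Collapsing the lattice sum in \eqref{coeffa} via $1-\cos(2\pi m/\Lambda)=1-\cos(2\pi(m+\Lambda z)/\Lambda)$, its eigenvalue is
\[
\mu_1=2\sum_{n\in\Z\setminus\{0\}}\frac{1-\cos(2\pi n/\Lambda)}{|n|^{1+2s}}.
\]
I would then estimate $\mu_1$ using $1-\cos\theta\le\min\{2,\theta^2/2\}$ and splitting the series at $n\simeq\Lambda$: the range $n\le\Lambda$ contributes $\lesssim\Lambda^{-2}\sum_{n\le\Lambda}n^{1-2s}\lesssim\Lambda^{-2s}$ (using $1-2s>-1$), and the tail $n>\Lambda$ contributes $\lesssim\sum_{n>\Lambda}n^{-(1+2s)}\lesssim\Lambda^{-2s}$. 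Hence $\mu_1\le\Gamma(s)\Lambda^{-2s}$, and since $\langle\nabla^2\E^s(X)h,h\rangle=\mu_1|h|^2$ for this eigenvector, \eqref{stimasubmeglio} follows.

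The first identity is routine linear algebra. The genuine work is quantitative and twofold: the convexity argument giving the sharp-in-$\Lambda$ lower bound on the weights, which must hold uniformly over all configurations (including clustered ones), and --- the more delicate point --- the exact collapse of the circulant eigenvalue to the one-dimensional zeta-type sum together with the two-regime estimate that extracts precisely the $\Lambda^{-2s}$ scaling, matching the lower bound so that the rate is optimal. I expect the bookkeeping in the Fourier reduction and the borderline summability as $s\to\tfrac12$ to require the most care.
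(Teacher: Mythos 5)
Your proposal is correct, and its first two parts coincide with the paper's argument: the paper likewise reads off from \eqref{seconda}--\eqref{terza} that the Hessian is the weighted graph Laplacian with weights \eqref{coeffa} (so \eqref{posvarsecsub} is the standard Dirichlet-form identity; your ordered/unordered bookkeeping only shifts the unspecified constants), and for \eqref{stimasubpeggio} it uses exactly your two ingredients, namely the uniform weight bound $a^s(\xi,\eta)\ge\gamma(s)\Lambda^{-1-2s}$ (which the paper gets directly from $\nper{\xi-\eta}\le\Lambda$, making your convexity argument more than is needed) and the combinatorial identity $\sum_{\lambda\neq\lambda'}(h_\lambda-h_{\lambda'})^2=\Lambda|h|^2$ on $\{h\cdot e=0\}$. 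Where you genuinely diverge is the upper bound \eqref{stimasubmeglio}: the paper does not diagonalize anything but exhibits the explicit sawtooth test vector $h_\lambda=\nper{\lambda}-\frac1\Lambda\sum_{\lambda'}\nper{\lambda'}$, bounds $|h|^2\gtrsim\Lambda^3$, and estimates the form by $(\nper{\lambda}-\nper{\lambda'})^2\le\nper{\lambda-\lambda'}^2$ followed by $\sum_{\lambda\neq\lambda'}\nper{\lambda-\lambda'}^{1-2s}\lesssim\Lambda^{3-2s}$; you instead exploit the circulant structure at $X\in\cs^\Lambda$, and your reduction of the first nontrivial eigenvalue to $\mu_1=2\sum_{n\neq0}|n|^{-1-2s}\bigl(1-\cos(2\pi n/\Lambda)\bigr)$ is exact (the terms $n\equiv0\ (\mathrm{mod}\ \Lambda)$ vanish, so unfolding the lattice sum is legitimate), and your two-regime estimate correctly yields $\mu_1\le\Gamma(s)\Lambda^{-2s}$, with $\mathrm{Re}\,v^{(1)}$ a genuine eigenvector of the real symmetric circulant orthogonal to $e$ (for $\Lambda\ge2$, which both proofs tacitly assume). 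Your route buys more: it identifies the exact extremal Rayleigh quotient on the relevant mode rather than a one-test-vector bound, and since it uses only the coefficients $a^s(\xi,\eta)$, it transfers to the supercritical case just as the paper's argument does via Remark \ref{subcomesuper} and \eqref{comportabene}. What the paper's choice buys in exchange is robustness and elementarity: no Fourier analysis, and the sawtooth is the discrete trace of the piecewise-affine interpolation alluded to in Remark \ref{subcomesuper}, which is the mechanism the authors rely on to cover the whole range $0<s<1$ with one construction.
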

\begin{proof}
Identity \eqref{posvarsecsub} is an immediate consequence of  Proposition \ref{prop:firstvarsub}.
Now we prove \eqref{stimasubpeggio}.
By definition, for every $\xi,\eta\in\sss(X)$ with $\xi\neq\eta$ we have
\begin{equation}\label{stimaovvia}
a^s(\xi,\eta)\ge \gamma(s)\Lambda^{-1-2s}.
\end{equation}
Now,  since
\[
 \sum_{\newatop{\lambda,\lambda'\in\{1,\ldots,\Lambda\}}{\lambda\neq\lambda'}} (h_{\lambda}-h_{\lambda'})^2= (\Lambda-1) \sum_{\lambda=1}^{\Lambda} h^2_{\lambda}- 2\sum_{\newatop{\lambda,\lambda'\in\{1,\ldots,\Lambda\}}{\lambda\neq\lambda'}}  h_{\lambda} h_{\lambda'}
\]
and
\[
 \Big(\sum_{\lambda=1}^{\Lambda} h_{\lambda}\Big)^2= \sum_{\lambda=1}^{\Lambda} h^2_{\lambda} +2\sum_{\newatop{\lambda,\lambda'\in\{1,\ldots,\Lambda\}}{\lambda\neq\lambda'}} h_{\lambda} h_{\lambda'},
\]
we find that for every $h\in\R^\Lambda$ with $h\cdot e=0$,
\begin{equation}\label{stimaovvia2}
 \sum_{\newatop{\lambda,\lambda'\in\{1,\ldots,\Lambda\}}{\lambda\neq\lambda'}} (h_{\lambda}-h_{\lambda'})^2= \Lambda \sum_{\lambda=1}^{\Lambda} h^2_{\lambda}.
\end{equation}
Thus, by \eqref{stimaovvia} and \eqref{stimaovvia2}, we have
\begin{equation}\label{generallambda1}
 \sum_{\newatop{\xi,\eta\in\sss(X)}{\xi\neq \eta}} a^s(\xi,\eta)(h_{\lambda(\xi)}-h_{\lambda(\eta)})^2\geq \gamma(s)\Lambda^{-1-2s}
 \sum_{\newatop{\lambda,\lambda'\in\{1,\ldots,\Lambda\}}{\lambda\neq\lambda'}} (h_{\lambda}-h_{\lambda'})^2= \gamma(s)
 \Lambda^{-2s}|h|^2,
\end{equation}
whence the claim follows by \eqref{posvarsecsub}.

Now we prove the upper bound.
To this end let 
\begin{equation}\label{nperdef}
\nper{\cdot}:=\min_{z\in\Z}|\cdot+\Lambda z|
\end{equation}
denote the ($\Lambda$-)periodic norm on $\R$.
We define the vector $h\in\R^\Lambda$ by setting
\[
 h_\lambda:= \nper{\lambda}-\frac{1}{\Lambda}\sum_{\lambda'=0}^{\Lambda-1} \nper{\lambda'}.
\]
By construction $h\cdot e=0$.
On the one hand we have
\begin{equation*}\label{lowerboundh2}
 |h|^2\ge C \Lambda^3.
\end{equation*}
On the other hand, since
\[
 a^s(\lambda,\lambda')\le \Gamma(s) \frac{1}{\nper{\lambda-\lambda'}^{1+2s}},
\]
we have
\[
\begin{aligned}
\langle\nabla^2\E^s(X)h,h\rangle&\le \Gamma(s)\sum_{\newatop{\lambda,\lambda'\in\{0,1,\ldots,\Lambda-1\}}{\lambda\neq\lambda'}}\frac{(\nper{\lambda}-\nper{\lambda'})^2}{\nper{\lambda-\lambda'}^{1+2s}}\\
&\le\Gamma(s)\sum_{\newatop{\lambda,\lambda'\in\{0,1,\ldots,\Lambda-1\}}{\lambda\neq\lambda'}}\nper{\lambda-\lambda'}^{1-2s}\\
&\le \Gamma(s) \Lambda^{3-2s}.
\end{aligned}
\]
Combining both estimates together yields \eqref{stimasubmeglio}.

\end{proof}
\begin{remark}\label{subcomesuper}
\rm{
Let us point out that in order to prove \eqref{stimasubmeglio}, being $0<s<1/2$, it would have been enough to consider the interpolating  function $\hat h$ to be piecewise constant instead of piecewise affine. 
The advantage of introducing piecewise affine functions is that 
they belong to $H^s$ also for $s\ge 1/2$,  covering the whole range of exponents $0<s<1$.
In fact, following along the lines of the proof of Proposition \ref{varsecpossub}, 
we have immediately that, also for $\frac1 2\le s <1$
\begin{equation}\label{darichiamareinsuper}
\sum_{\newatop{\xi,\eta\in\sss(X)}{\xi\neq\eta}}a^s(\xi,\eta)\big(h_{\lambda(\xi)}-h_{\lambda(\eta)}\big)^2\ge \gamma(s)\Lambda^{-2s}|h|^2 \textrm{ for every }X\in\rs^\Lambda,\,\, h\in\R^\Lambda\textrm{ with }h\cdot e=0
\end{equation}
and
\begin{equation}\label{darichiamareinsuper2}
\sum_{\newatop{\xi,\eta\in\sss(X)}{\xi\neq\eta}}a^s(\xi,\eta)\big(h_{\lambda(\xi)}-h_{\lambda(\eta)}\big)^2\le \Gamma(s)\Lambda^{-2s}|h|^2 \textrm{ for every }X\in\cs^\Lambda,\,\, \textrm{for some } h\in\R^\Lambda\textrm{ with }h\cdot e=0,
\end{equation}
where the coefficients $a^s(\xi,\eta)$ are defined in \eqref{coeffa}.}
\end{remark}

Arguing verbatim as in the proof of \eqref{prima} in Proposition \ref{prop:firstvarsub} we get its 
 extension  to the case of multiple singularities (we recall that $\mathcal{I}(\xi)$ is the set of indices of the particles at position $\xi$ and $\widehat m (\xi)$ is its cardinality).

\begin{lemma}\label{missinglemma}
Let $0<s<\frac 12$ and let $X\in\X^\Lambda$; then, given $\{h_\xi\}_{\xi \in \sss(X)}\subset \R^\Lambda$, we have
\[
 \E^s \Big(X+ \sum_{\xi\in \sss(X)} h_\xi\sum_{m\in \mathcal{I}(\xi)} e_m \Big)= \sum_{\xi\in \sss(X)} h_\xi \widehat{m}(\xi) (-\Delta)^s u^X(\xi) + \mathrm o\Big (\sum_{\xi\in \sss(X)} |h_\xi| \Big)\, .
\]
\end{lemma}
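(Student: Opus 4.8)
The plan is to reduce the simultaneous rigid displacement of all clusters to a superposition of the single-cluster computation already carried out for \eqref{prima} in Proposition \ref{prop:firstvarsub}, controlling the interaction between distinct clusters as a genuine remainder. Throughout, I write $\mathcal Q(v,w):=\int_0^\Lambda\ud x\int_\R\frac{(v(x)-v(y))(w(x)-w(y))}{|x-y|^{1+2s}}\ud y$ for the symmetric bilinear form associated with the energy, so that $\E^s(X)=\tfrac12\mathcal Q(u^X,u^X)$, and (using \eqref{invariant}) I assume without loss of generality that $\sss(X)\subset(0,\Lambda)$. For $\xi\in\sss(X)$ let $I_{h_\xi}$ be the interval with endpoints $\xi$ and $\xi+h_\xi$, set $A_{h_\xi}:=I_{h_\xi}+\Lambda\Z$ and $v_\xi:=\mathrm{sgn}(h_\xi)\,\widehat m(\xi)\,\chi_{A_{h_\xi}}$. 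First I would note that, since $\sss(X)$ is finite and its points (together with their $\Lambda$-periodic copies) are mutually separated, for $\max_\xi|h_\xi|$ small the sets $A_{h_\xi}$ are pairwise disjoint and at positive distance from one another; moreover, displacing the whole cluster at $\xi$ rigidly by $h_\xi$ merely translates the jump of $u^X$ located at $\xi$ (whose height is exactly $\widehat m(\xi)$) to $\xi+h_\xi$, so that $u^{X+\sum_\xi h_\xi\sum_{m\in\inde(\xi)}e_m}=u^X+\sum_{\xi\in\sss(X)}v_\xi$.

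Then, expanding the quadratic form and using bilinearity, the increment of the energy splits as
\begin{equation*}
\E^s\Big(X+\sum_{\xi\in\sss(X)}h_\xi\sum_{m\in\inde(\xi)}e_m\Big)-\E^s(X)=\sum_{\xi\in\sss(X)}\Big(\mathcal Q(u^X,v_\xi)+\tfrac12\mathcal Q(v_\xi,v_\xi)\Big)+\tfrac12\sum_{\newatop{\xi,\eta\in\sss(X)}{\xi\neq\eta}}\mathcal Q(v_\xi,v_\eta).
\end{equation*}
The $\xi$-th summand in the first sum equals $\E^s(X+h_\xi\sum_{m\in\inde(\xi)}e_m)-\E^s(X)$, i.e. the energy change produced by moving the single cluster at $\xi$ while freezing all the others, and I would evaluate it verbatim as in \eqref{fvar1}--\eqref{fvar4}: the only modification is that the characteristic function $\chi_{A_h}$ and the shift $\tfrac12$ appearing there are multiplied by $\widehat m(\xi)$, the height of the displaced jump. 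Choosing $\sigma$ smaller than the distance from $\xi$ to the other clusters, the analogue of \eqref{vienezero} vanishes by antisymmetry (on $I_{h_\xi}$ the frozen $u^X$ has slope one), the analogue of \eqref{fvar2} holds with $\tfrac12$ replaced by $\tfrac{\widehat m(\xi)}2$ and $\bar u^X(\xi)=u^X(\xi^+)+\tfrac{\widehat m(\xi)}2$, and the rescaling plus dominated convergence of \eqref{fvar3}--\eqref{fvar4} give the limit $\widehat m(\xi)\,h_\xi\,(-\Delta)^su^X(\xi)+\mathrm o(|h_\xi|)$, the principal value $(-\Delta)^su^X(\xi)$ of \eqref{laplafra} being well defined at the jump point $\xi$ by Remark \ref{bendefi}.

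For the cross terms, since for $\xi\neq\eta$ the supports $A_{h_\xi}$ and $A_{h_\eta}$ stay at a fixed positive distance as $\max_\xi|h_\xi|\to0$, on $A_{h_\xi}\times A_{h_\eta}$ the kernel $|x-y|^{-1-2s}$ is bounded, so $\mathcal Q(v_\xi,v_\eta)=\mathrm O(\widehat m(\xi)\widehat m(\eta)|h_\xi||h_\eta|)$; as there are finitely many clusters, the double sum is $\mathrm O\big((\sum_\xi|h_\xi|)^2\big)=\mathrm o(\sum_\xi|h_\xi|)$. Since each single-cluster remainder is $\mathrm o(|h_\xi|)$ and the clusters are finite in number, the first sum equals $\sum_\xi\widehat m(\xi)h_\xi(-\Delta)^su^X(\xi)+\mathrm o(\sum_\xi|h_\xi|)$, and collecting the two contributions yields the claim.

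The step I expect to be the main obstacle, and the reason one must keep each cluster's self-interaction $\tfrac12\mathcal Q(v_\xi,v_\xi)$ together with its linear term $\mathcal Q(u^X,v_\xi)$ instead of discarding it as higher order, is that the self-energy of a single swept strip is of order $|h_\xi|^{1-2s}$, hence much larger than $|h_\xi|$ when $0<s<\tfrac12$. This divergent contribution is cancelled exactly by the divergent part of the interaction of the strip across the jump it transports; both pieces scale like $\widehat m(\xi)^2$, so the cancellation---already built into the vanishing identity \eqref{fvar2}---survives unchanged for arbitrary multiplicity, after which the finite first-order coefficient is extracted precisely as in the single-particle case.
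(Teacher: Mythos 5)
Your proof is correct and follows essentially the same route as the paper, whose proof consists of the single assertion that one argues verbatim as in the proof of \eqref{prima} in Proposition \ref{prop:firstvarsub}: your single-cluster computation with the multiplicity factor $\widehat m(\xi)$ and the shift $\tfrac{\widehat m(\xi)}{2}$ (so that $u^X(\xi^+)+\tfrac{\widehat m(\xi)}{2}=\bar u^X(\xi)$, recovering the principal value in \eqref{laplafra}) is precisely that extension. The additional details you supply beyond what the paper leaves implicit --- the bilinear-form decomposition into single-cluster increments plus cross terms, and the $\mathrm O(|h_\xi||h_\eta|)$ bound on the cross terms coming from the fixed separation of the swept strips --- are exactly the natural way to make the ``verbatim'' claim rigorous for simultaneous displacements, and they are sound.
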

The next lemma shows that splitting multiple particles gives a direction with infinite slope for the energy.
\begin{lemma}\label{missinglemma2}
Let $0<s<\frac 12$ and let $X\in\X^\Lambda\setminus\rs^\Lambda$. Let $\xi\in\sss(X)$ be such that $\mm(\xi)\ge 2$. Then, for every $m\in\inde(\xi)$, we have
$$
\partial_{x_m}^{\pm}\E^s(X)=\mp \infty.
$$
\end{lemma}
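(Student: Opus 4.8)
The plan is to follow the scheme of the proof of \eqref{prima} in Proposition \ref{prop:firstvarsub}, now keeping track of the single jump of height $\mm(\xi)\ge 2$ carried by $u^X$ at the multiple point $\xi$. Fix $m\in\inde(\xi)$ and $h>0$ small. Splitting off one particle from the cluster at $\xi$ and displacing it to $\xi+h$ leaves a jump of height $\mm(\xi)-1$ at $\xi$ and creates a new jump of height $1$ at $\xi+h$; consequently $u^{X+he_m}=u^X+\chi_{A_h}$, with $I_h=(\xi,\xi+h)$ and $A_h=I_h+\Lambda\Z$, exactly as in the regular case. Hence the difference-of-squares manipulation leading to \eqref{fvar1} applies verbatim and gives
\begin{equation*}
\E^s(X+he_m)-\E^s(X)=2\int_{I_h}\ud x\int_{\R\setminus\overline{I_h}}\frac{(u^X(x)+\tfrac12)-(u^X(y)+\tfrac12\chi_{A_h}(y))}{|x-y|^{1+2s}}\ud y .
\end{equation*}

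Next I would split the inner integral over $B_\sigma(\xi^h)\setminus\overline{I_h}$ and over $\R\setminus\overline{B_\sigma(\xi^h)}$, where $\xi^h=\xi+\tfrac h2$ and $0<\sigma<\min_{\xi'\in\sss(X)\setminus\{\xi\}}|\xi-\xi'|$, so that $\xi$ is the only point of $\sss(X)$ inside $B_\sigma(\xi^h)$. The far contribution is controlled exactly as in \eqref{fvar3}--\eqref{perfvar3}: the integrand is bounded on $\R\setminus\overline{B_\sigma(\xi^h)}$ and $|I_h|=h$, so this term is $O(h)$ and contributes a finite amount to the incremental ratio.

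The heart of the argument is the near term $N_h:=\int_{I_h}\ud x\int_{B_\sigma(\xi^h)\setminus\overline{I_h}}\frac{(u^X(x)+\frac12)-u^X(y)}{|x-y|^{1+2s}}\ud y$, where $\chi_{A_h}\equiv 0$ on $B_\sigma(\xi^h)\setminus\overline{I_h}$. Passing to the centred variables $t=x-\xi^h$, $t'=y-\xi^h$ and using that $u^X$ has slope $1$ on both sides of its single jump of height $\mm(\xi)$ at $\xi$, one has $u^X(\xi^h+t)-u^X(\xi^h+t')$ equal to $t-t'$ for $t'\in(\tfrac h2,\sigma)$ and to $(t-t')-\mm(\xi)$ for $t'\in(-\sigma,-\tfrac h2)$. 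Splitting the numerator into the odd part $t-t'$ and a constant part, the odd part integrates to zero by the parity of the domain under $(t,t')\mapsto(-t,-t')$ (this is precisely the cancellation behind \eqref{fvar2} in the regular case), while the constant part yields
\begin{equation*}
N_h=\Big(\tfrac12+\big(\tfrac12-\mm(\xi)\big)\Big)J_h=(1-\mm(\xi))\,J_h,\qquad J_h:=\int_{-h/2}^{h/2}\ud t\int_{h/2}^{\sigma}\frac{\ud t'}{|t-t'|^{1+2s}},
\end{equation*}
the two constituent half-integrals being equal by the symmetry $t'\mapsto-t'$. A direct computation gives $J_h=\frac{1}{2s(1-2s)}h^{1-2s}+O(h)$, which blows up after division by $h$ since $0<s<\frac12$.

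Combining the three steps, $\E^s(X+he_m)-\E^s(X)=2(1-\mm(\xi))J_h+O(h)$, so the incremental ratio behaves like $\frac{1-\mm(\xi)}{s(1-2s)}\,h^{-2s}(1+o(1))$; since $\mm(\xi)\ge 2$ the prefactor is negative and $h^{-2s}\to+\infty$, whence $\partial_{x_m}^{+}\E^s(X)=-\infty$. Displacing the particle to the left, $h<0$, is handled identically (the single jump now sits to the right of $I_h$), giving the same negative energy increment $2(1-\mm(\xi))J_{|h|}+O(|h|)$; dividing by the negative quantity $h$ reverses the sign and yields $\partial_{x_m}^{-}\E^s(X)=+\infty$. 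The delicate point is the near-term computation: one must check both that the odd part of the numerator cancels and that the residual constant $(1-\mm(\xi))$ is captured correctly, since this is exactly the term that vanishes in the regular case $\mm(\xi)=1$ and is now responsible for the infinite slope.
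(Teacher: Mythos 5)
Your proof is correct and follows essentially the same route as the paper's: the difference-of-squares identity \eqref{fvar1}, the near/far splitting around $\xi^h$ with the odd part $x-y$ cancelling by reflection symmetry, the residual constant $(1-\mm(\xi))$ multiplying a one-sided kernel integral $J_h=\frac{h^{1-2s}}{2s(1-2s)}+\mathrm{O}(h)$, and the sign analysis using $\mm(\xi)\ge 2$. The only cosmetic differences are that you integrate the constant over the right half-annulus rather than the left (equal by symmetry) and spell out the case $h<0$, which the paper dismisses as identical up to notational changes.
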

\begin{proof}
Fix $m\in\inde(\xi)$. By construction, for $|h|$ small enough, $u^{X+h e_m} =u^X + \text{sgn}(h) \chi_{I_h}$, where $I_{h}$ is the (for instance open) interval with extreme points $\xi$ and $\xi+h$.
We set $A_{h}:=I_{h}+\Lambda\Z$\,.
To simplify notation we consider only the case $h>0$. By arguing verbatim as in \eqref{fvar1} we have
\begin{equation}\label{mancava1}
\E^s(X+he_{m})-\E^s(X)=2\int_{I_h}\ud x\int_{\R\setminus \overline I_h}\frac{u^X(x)+\frac 1 2- \big( u^X(y) + \frac12 \chi_{A_h}(y) \big)}{|x-y|^{1+2s}}\ud y.
\end{equation}
We set $\xi^h:=\xi+\frac h 2$, so that $I_h=B_{\frac{h}{2}}(\xi^h)$. Let $0<\sigma<\min_{\xi'\in\sss(X)\setminus\{\xi\}}|\xi-\xi'|$;  we notice that, for $h$ small enough
\begin{eqnarray*}
u^X(x)-u^{X}(y)=&\,x-y&\qquad\textrm{if }x\in I_h,\,y\in\Big( \xi_h+\frac h 2,\xi_h+\sigma\Big),\\
u^X(x)-u^{X}(y)=&\,x-y-\mm(\xi)  &\qquad\textrm{if }x\in I_h,\, y\in\Big(\xi_h-\sigma,\xi_h-\frac h 2\Big),
\end{eqnarray*}
whence we deduce
\begin{equation}\label{fvar2dopo}
\begin{aligned}
&\int_{B_{\frac h 2}(\xi^h)}\ud x\int_{B_{\sigma}(\xi^h)\setminus \overline B_{\frac h 2}(\xi^h)}\frac{( u^X(x)+\frac 12)-(u^{X}(y)+ \frac 12 \chi_{A_{h}}(y))}{|x-y|^{1+2s}}\ud y\\
=&\,\int_{B_{\frac h 2}(\xi^h)}\ud x\int_{B_{\sigma}(\xi^h)\setminus \overline B_{\frac h 2}(\xi^h)}\frac{ u^X(x)+\frac 12-u^{X}(y)}{|x-y|^{1+2s}}\ud y\\
=&\,(1-\mm(\xi))\int_{B_{\frac h 2}(\xi^h)}\ud x\int_{\xi^h-\sigma}^{\xi^h-\frac h 2}\frac{1}{|x-y|^{1+2s}}\ud y\\
=&\,(1-\mm(\xi))\frac{1}{2s(1-2s)}\Big(h^{1-2s}-\Big(\sigma + \frac h 2 \Big)^{1-2s}+\Big ( \sigma - \frac h2 \Big)^{1-2s}\Big)\\
=&\,
(1-\mm(\xi))\frac{h^{1-2s}}{2s(1-2s)}+
\mathrm{O}(h).
\end{aligned}
\end{equation}
Finally, since the numerator in the integral below is in $L^\infty(\R^2)$,
one can easily prove that
\begin{equation*}
\int_{B_{\frac h 2}(\xi^h)}\ud x\int_{\R\setminus\overline{B}_{\sigma}(\xi^h)}\frac{( u^X(x)+\frac 12)-(u^{X}(y)+ \frac 12 \chi_{A_{h}}(y))}{|x-y|^{1+2s}}\ud y=\mathrm{O}(h),
\end{equation*}
which, together with \eqref{mancava1} and \eqref{fvar2dopo}, yields
\begin{equation*}
\E^s(X+he_{m})-\E^s(X)=(1-\mm(\xi))\frac{h^{1-2s}}{s(1-2s)}+\mathrm{O}(h),
\end{equation*}
whence the claim follows, since $\mm(\xi)\ge 2$.
\end{proof}
Now, according with Lemma \ref{missinglemma2}, we prove that  separating at least one singularity from  a close packed cluster of  singularities   decreases the energy.  
For every pair of sets $A,B\subset\R$, we set
\begin{equation}\label{interazione}
\is^s(A,B):=\int_{A}\ud x\int_{B}\frac{1}{|x-y|^{1+2s}}\ud y.
\end{equation}
\begin{proposition}\label{spostouna}
Let $0<s<\frac 1 2$, $\sigma>0$  and let $X\in\rs^\Lambda$. Let $\qs:=\{\xi_1,\ldots,\xi_K\}\subset\sss(X)$ with $K\ge 2$ and $0<\xi_k\le \xi_{k+1}$ (for all $k=1,\ldots,K-1$) be such that  
$$
\min_{\newatop{\xi\in \sss(X)\setminus \qs}{k=1,\ldots,K}}\{ |\xi-\xi_k|, \, |\xi-\xi_k - \Lambda| , \, |\xi-\xi_k + \Lambda| \} \ge \sigma\, . 
$$
Then, setting $\delta:= \xi_K-\xi_1$, we have 
\begin{equation*}
\nabla \E^s(X)\cdot e_{\lambda(\xi_K)}
\le \frac{1-K}{2s}\delta^{-2s}+C (\Lambda,s,\sigma),
\end{equation*}
for some constant $C(\Lambda,s,\sigma)$ (independent of $\delta$).
\end{proposition}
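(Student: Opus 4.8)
The plan is to reduce the claim to the first‑variation identity \eqref{prima} and then to isolate, among all the terms composing the force $(-\Delta)^s u^X(\xi_K)$, the contribution of the $K-1$ other particles of the cluster. By \eqref{prima},
\[
\nabla\E^s(X)\cdot e_{\lambda(\xi_K)}=(-\Delta)^s u^X(\xi_K)=2\,\mathrm{p.v.}\!\int_\R\frac{\bar u^X(\xi_K)-u^X(y)}{|\xi_K-y|^{1+2s}}\ud y,
\]
so the whole statement is an estimate on this principal value, to be made uniform in $\delta=\xi_K-\xi_1$ and to depend on the rest of the configuration only through $\sigma$.

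First I would record an elementary a priori bound: if $Y\in\rs^\Lambda$ is any regular configuration whose particle $\zeta$ has no other particle within periodic distance $g$, then $|(-\Delta)^s u^Y(\zeta)|\le C(\Lambda,s,g)$. Indeed, on $B_g(\zeta)$ the integrand is (in the variable $t=\zeta-y$) the odd ramp $t\mapsto t-\tfrac12\mathrm{sgn}(t)$, whose principal value over a symmetric interval vanishes exactly as in \eqref{vienezero}; on the complement $|\bar u^Y(\zeta)-u^Y|\le\Lambda$ (the oscillation of a $\Lambda$‑periodic $u$) and the kernel is integrable, so one may take $C(\Lambda,s,g)=\tfrac{2\Lambda}{s}(g/2)^{-2s}$.

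The heart of the argument is to let the particles $\xi_1,\dots,\xi_{K-1}$ flow from a well‑separated reference position into their actual positions, tracking the force on $\xi_K$. Fix $X^*$ obtained from $X$ by relocating $\xi_1,\dots,\xi_{K-1}$ into $B_{\sigma/2}(\xi_K)$ so that they are mutually $\ge\sigma/(4\Lambda)$ apart and $\ge\sigma/(4\Lambda)$ from $\xi_K$; by the a priori bound $|(-\Delta)^s u^{X^*}(\xi_K)|\le C(\Lambda,s,\sigma)$. Now move the particles back, one at a time. The key point is that, by \eqref{terza}, the derivative of $(-\Delta)^s u^X(\xi_K)$ with respect to the position $\eta$ of any single one of them equals $-\sum_{z\in\Z}\tfrac{2}{|\xi_K-\eta+\Lambda z|^{1+2s}}$, which depends only on that particle and on $\xi_K$ — not on the other particles, and not on any incidental coincidence the moving particle creates with particles outside $\qs$. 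Hence the total increment is path‑independent and crossings of non‑cluster particles are harmless; integrating the $z=0$ term from the reference distance (of order $\sigma$) down to $\xi_K-\xi_j$ yields exactly $-\tfrac1s(\xi_K-\xi_j)^{-2s}$ plus a bounded remainder, while the $z\ne0$ terms are negative and bounded and may be discarded for an upper bound. Summing over $j$ and using $\xi_K-\xi_j\le\delta$, i.e. $(\xi_K-\xi_j)^{-2s}\ge\delta^{-2s}$, gives
\[
(-\Delta)^s u^X(\xi_K)\le-\frac1s\sum_{j=1}^{K-1}(\xi_K-\xi_j)^{-2s}+C(\Lambda,s,\sigma)\le\frac{1-K}{s}\,\delta^{-2s}+C(\Lambda,s,\sigma),
\]
which, since $1-K<0$, is in particular bounded by $\tfrac{1-K}{2s}\delta^{-2s}+C$, the claimed estimate (one actually gets the stronger constant $\tfrac{1-K}{s}$).

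I expect the main obstacle to be the bookkeeping that collapses every non‑singular contribution — the reference force $(-\Delta)^s u^{X^*}(\xi_K)$, the periodic images $z\ne0$, and the lower endpoints of the one‑particle integrals — into a single $C(\Lambda,s,\sigma)$ uniform in $\delta$, together with the careful justification that the fundamental theorem of calculus applies to $\eta\mapsto(-\Delta)^s u(\xi_K)$ even as the moving particle crosses points of $\sss(X)\setminus\qs$ (one checks that $(-\Delta)^s u(\xi_K)$ stays Lipschitz in $\eta$ with derivative given by \eqref{terza} for $\eta\neq\xi_K$, using that $\xi_K$ itself is never involved in a collision). A more computational alternative, mirroring the proof of Lemma \ref{missinglemma2} and the interaction notation $\is^s$ of \eqref{interazione}, is to expand the finite increment $\E^s(X+he_{\lambda(\xi_K)})-\E^s(X)$ and let $h\to0^+$; this runs cleanly when $\delta<\sigma$ (so that all cluster particles sit inside $B_\sigma(\xi_K)$), but the flow argument above is what makes the estimate robust for arbitrary $\delta$.
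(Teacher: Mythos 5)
Your argument is correct in the regime where the proposition is actually used, and it takes a genuinely different route from the paper's. The paper never passes through $(-\Delta)^s u^X(\xi_K)$: it expands the one-sided difference quotient $h^{-1}\big(\E^s(X+he_{\lambda(\xi_K)})-\E^s(X)\big)$ directly (as in \eqref{fvar1}), discards the far field via \eqref{finito}, and splits the remaining local term into the cluster region and the two buffer intervals $(\xi_1-\sigma,\xi_1)$ and $(\xi_K+\delta,\xi_K+\sigma)$, where the differences $u^X(x)-u^X(y)$ are known exactly or by one-sided bounds (\eqref{usodopo0}--\eqref{usodopo}); explicit Taylor expansions of the interaction integrals $\is^s$ then give \eqref{termuno}--\eqref{termdue}. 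You instead reuse the already-established variation formulas: \eqref{prima} converts the claim into an estimate on the fractional Laplacian, and the increment identity underlying \eqref{terza} --- which, as you correctly observe, is exact and insensitive to collisions of the moving particle with particles other than $\xi_K$, because displacing one particle changes $u^X$ by a periodized characteristic function supported away from $\xi_K$ --- lets you integrate the force on $\xi_K$ from a well-separated reference configuration. This buys a conceptually cleaner proof, a per-particle bound $-\tfrac1s\sum_j(\xi_K-\xi_j)^{-2s}$ resolving each mutual distance rather than only $\delta$, and indeed the sharper coefficient $\tfrac{1-K}{s}$ (the paper's inequalities \eqref{solouna} and \eqref{termdue} are not tracked sharply, which is why it only records $\tfrac{1-K}{2s}$); what the paper's route buys is that all estimates are finite explicit integrals with no need to legitimize differentiating through momentarily non-regular intermediate configurations.

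Two corrections to details. First, the $z\neq0$ image terms in the transported derivative are not ``negative'': their sign depends on the direction in which $\xi_j$ travels between its reference and actual positions, so you should bound their absolute value instead; this is harmless, since along the sweep one has $|\xi_K-\eta+\Lambda z|\ge \Lambda|z|-\max\{\sigma,\delta\}$, which is bounded below once $\delta\le\sigma$ (say, with $\sigma\le\Lambda/2$). Second, and relatedly, your closing claim that the flow argument makes the estimate ``robust for arbitrary $\delta$'' overreaches: if $\delta$ is comparable to $\Lambda$, the periodic image $\xi_1+\Lambda$ approaches $\xi_K$ from the right and contributes a term of order $+\tfrac1s(\Lambda-\delta)^{-2s}$ to $(-\Delta)^s u^X(\xi_K)$ --- precisely the $z=-1$ term you discard --- so no bound of the stated form with $C$ independent of $\delta$ can hold in that regime. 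The paper's proof carries the same implicit restriction (its decomposition presumes $\delta<\sigma$, e.g.\ the interval $(\xi_K+\delta,\xi_K+\sigma)$, and \eqref{usodopo} requires that no periodic image lies in $(\xi_K,\xi_K+\sigma)$), and the proposition is only invoked for collapsing clusters, i.e.\ $\delta\to0$ in the proof of Proposition \ref{nocollis<}, where both arguments are sound.
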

\begin{proof}
Let $h>0$. We set $I_h:=(\xi_K,\xi_{K}+h)$ and $A_h:=I_h+\Lambda\Z$. By \eqref{fvar1}, for $h$ small enough we have
\begin{equation}\label{comefvar1}
\E^s(X+he_{\lambda(\xi_K)})-\E^s(X)=2\int_{I_h} \ud x
\int_{\R\setminus \overline I_h}\frac{( u^X(x)+\frac 12)-(u^{X}(y)+ \frac 12 \chi_{A_{h}}(y))}{|x-y|^{1+2s}}\ud y,
\end{equation}
and
\begin{equation}\label{finito}
\int_{I_h} \ud x
\int_{\R\setminus [\xi_1-\sigma,\xi_K+\sigma]}\frac{(u^X(x)+\frac 12)-(u^{X}(y)+ \frac 12 \chi_{A_{h}}(y))}{|x-y|^{1+2s}}\ud y\le C(\Lambda,\sigma)h\sum_{z\in\Z\setminus\{0\}} \frac{1}{|z|^{1+2s}}.
\end{equation}
Therefore, setting
\begin{equation*}
E^s_\delta[h]:=
\int_{I_h} \ud x
\int_{(\xi_1-\sigma,\xi_K+\sigma)\setminus\overline I_h}\frac{(u^X(x)+\frac 12)-(u^{X}(y)+ \frac 12 \chi_{A_{h}}(y))}{|x-y|^{1+2s}}\ud y,
\end{equation*}
in order to prove the claim, it is enough to show that
\begin{equation}\label{nuocla}
\limsup_{h\to 0^+}\frac{E^s_\delta[h]}{h}\le \frac{1-K}{2s}\delta^{-2s}+C(s,\sigma) \, .
\end{equation}
We first write (for $h$ small enough)  $E^s_\delta[h]:=E^s_{\delta,1}[h]+E^s_{\delta,2}[h]$, where 
\begin{equation*}
\begin{aligned}
E^s_{\delta,1}[h]=&\,\int_{I_h} \ud x
\int_{\xi_1}^{\xi_K}\frac{u^X(x)+\frac 12-u^{X}(y)}{|x-y|^{1+2s}}\ud y+\int_{I_h} \ud x
\int_{\xi_K+h}^{\xi_K+\delta}\frac{u^X(x)+\frac 12-u^{X}(y)}{|x-y|^{1+2s}}\ud y,\\
E^s_{\delta,2}[h]
:=&\,\int_{I_h} \ud x
\int_{\xi_1-\sigma}^{\xi_1}\frac{u^X(x)+\frac 12-u^{X}(y)}{|x-y|^{1+2s}}\ud y+\int_{I_h} \ud x
\int_{\xi_K+\delta}^{\xi_K+\sigma}\frac{u^X(x)+\frac 12-u^{X}(y)}{|x-y|^{1+2s}}\ud y.
\end{aligned}
\end{equation*}
Now we observe that
\begin{eqnarray}\label{usodopo0}
u^X(x)-u^{X}(y)=&\,-K+x-y\qquad&\textrm{whenever }x\in I_h, y\in (\xi_1-\sigma,\xi_{1}),\\
\label{solouna}
u^X(x)-u^{X}(y)\le&\,-1+x-y \qquad&\textrm{whenever }x\in I_h, y\in (\xi_1,\xi_{K})
\\ \label{usodopo}
u^X(x)-u^{X}(y)=&\,x-y\qquad&\textrm{whenever }x\in I_h, y\in(\xi_K+h,\xi_K+\sigma)\,.
\end{eqnarray}
Hence, by \eqref{solouna} and \eqref{usodopo}, recalling the definition of $\is^s$ in \eqref{interazione} and using Taylor expansion, we get 
\begin{equation}\label{termuno}
\begin{aligned}
E^s_{\delta,1}[h]
\le &\, -\frac{1}{2}\is^s(I_h,(\xi_1,\xi_K))+\frac{1}{2}\is^s(I_h,(\xi_K+h,\xi_K+\delta))\\
&\,+\int_{I_h}\ud x\int_{\xi_1}^{\xi_K}(x-y)^{-2s}\ud y-\int_{I_h}\ud x\int_{\xi_K+h}^{\xi_K+\delta}(y-x)^{-2s}\ud y\\
=&\,-\frac{1}{2}\frac{1}{2s(1-2s)}\big(h^{1-2s}+\delta^{1-2s}-(h+\delta)^{1-2s}\big)\\
&\,+\frac{1}{2}\frac{1}{2s(1-2s)}\big(h^{1-2s}+(\delta-h)^{1-2s}-\delta^{1-2s}\big)\\
&\,+\frac{1}{(1-2s)(2-2s)}\big(-h^{2-2s}-\delta^{2-2s}+(h+\delta)^{2-2s}\big)\\
&\,-\frac{1}{(1-2s)(2-2s)}\big(-h^{2-2s}-(\delta-h)^{2-2s}+\delta^{2-2s}\big)\\
=&\,\frac{1}{2}\frac{1}{2s(1-2s)}\big((\delta+h)^{1-2s}+(\delta-h)^{1-2s}-2\delta^{1-2s}\big)\\
&\,+\frac{1}{(1-2s)(2-2s)}\big((h+\delta)^{2-2s}+(\delta-h)^{2-2s}-2\delta^{2-2s}\big)\\
=&\, \mathrm{O}(h^2),
\end{aligned}
\end{equation}
where $|\mathrm{O}(h^2)|\le C(s, \delta) h^2$.
Analogously, by \eqref{usodopo0} and \eqref{usodopo}  we have 
\begin{equation}\label{termdue0}
\begin{aligned}
E^s_{\delta,2}[h]
=&\, \Big(\frac 1 2-K\Big)\is^s(I_h,(\xi_1-\sigma,\xi_1))+\frac{1}{2}\is^s(I_h,(\xi_K+\delta,\xi_K+\sigma))\\
&\,+\int_{I_h}\ud x\int_{\xi_1-\sigma}^{\xi_1}(x-y)^{-2s}\ud y-\int_{I_h}\ud x\int_{\xi_K+\delta}^{\xi_K+\sigma}(y-x)^{-2s}\ud y\\
=&\,\Big(\frac 1 2-K\Big)\frac{1}{2s(1-2s)}\big(-(h+\delta+\sigma)^{1-2s}+(\delta+\sigma)^{1-2s}-\delta^{1-2s}+(\delta+h)^{1-2s}\big)\\
&\,+\frac{1}{2}\frac{1}{2s(1-2s)}\big((\sigma-h)^{1-2s}-\sigma^{1-2s}+\delta^{1-2s}-(\delta-h)^{1-2s}\big)
\\
&\,+\frac{1}{(1-2s)(2-2s)}\big(-(\delta+h)^{2-2s}+(h+\delta+\sigma)^{2-2s}+\delta^{2-2s}-(\delta+\sigma)^{2-2s}\big)\\
&\,-\frac{1}{(1-2s)(2-2s)}\big(-(\sigma-h)^{2-2s}+(\delta-h)^{2-2s}+\sigma^{2-2s}-\delta^{2-2s}\big).
 \end{aligned}
\end{equation}
By Taylor expansion we have
\begin{eqnarray*}
&&-(h+\delta+\sigma)^{1-2s}+(\delta+\sigma)^{1-2s}-\delta^{1-2s}+(\delta+h)^{1-2s}\\
&=&-(1-2s)(\delta+\sigma)^{-2s}h+(1-2s)\delta^{-2s}h+\mathrm{O}(h^2);\\
&&(\sigma-h)^{1-2s}-\sigma^{1-2s}+\delta^{1-2s}-(\delta-h)^{1-2s}\\
&=&-(1-2s)\sigma^{-2s}h+(1-2s)\delta^{-2s}h+\mathrm{O}(h^2);\\
&&-(\delta+h)^{2-2s}+(h+\delta+\sigma)^{2-2s}+\delta^{2-2s}-(\delta+\sigma)^{2-2s}\\
&=&-(2-2s)\delta^{1-2s}h+(2-2s)(\delta+\sigma)^{1-2s}h+\mathrm{O}(h^2);\\
&&-(\sigma-h)^{2-2s}+(\delta-h)^{2-2s}+\sigma^{2-2s}-\delta^{2-2s}\\
&=&(2-2s)\sigma^{1-2s}h-(2-2s)\delta^{1-2s}h+\mathrm{O}(h^2).
\end{eqnarray*}
Plugging this in \eqref{termdue0} yields
\begin{equation}\label{termdue}
\begin{aligned}
E^s_{\delta,2}[h]
=&\,\Big(\frac 1 2-K\Big)\frac{h}{2s}\big(\delta^{-2s}-(\delta+\sigma)^{-2s}\big)+\frac 1 2\frac{h}{2s}\big(\delta^{-2s}-\sigma^{-2s}\big)\\
&\,+\frac{h}{1-2s}\big((\delta+\sigma)^{1-2s}-\sigma^{1-2s}\big)+\mathrm{O}(h^2)\\
\le &\,(1-K)\frac{h}{2s}\delta^{-2s}+ (K- 1 )\frac{h}{2s} \sigma^{-2s} 
+\frac{h}{1-2s}(\Lambda +\sigma)^{1-2s} +\mathrm{O}(h^2)\\
\le &\,(1-K)\frac{h}{2s}\delta^{-2s}+C(\Lambda,s,\sigma) h.
\end{aligned}
\end{equation}
By \eqref{termuno} and \eqref{termdue} we get \eqref{nuocla}, thus concluding the proof of the proposition.
\end{proof}
\begin{remark}
\rm{Clearly, under the same assumptions of 
 Proposition \ref{spostouna}, and by arguing verbatim  as in its  proof,  one also has 
 \begin{equation*}
-\nabla \E^s(X)\cdot e_{\lambda(\xi_1)}\le \frac{1-K}{2s}\delta^{-2s}+C(\Lambda,s,\sigma).
\end{equation*}
 }
\end{remark}
\subsection{The supercritical case $\frac 1 2\le s<1$}
Here we compute the first and second variations of the functional $\E^s_\ep$ defined in \eqref{enesupercrit}.
For every $X\in\rs^\Lambda$ and for every $\xi\in\sss(X)$, we recall that the measure $\mu^\xi$ is defined in \eqref{misurasing} and we set
$\mu^{\hat\xi}:=\mu^X-\mu^{\xi}$.
\begin{proposition}\label{prop:firstvarsuper}
Let $\frac 1 2\le s<1$, $0<\ep<\Lambda$ and let $X\in\rs^{\Lambda}$.
Let $\xi,\eta\in\sss(X)$ with $\xi,\eta>0$ and $\xi\neq \eta$.
Then
\begin{equation}\label{primasuper}
\partial_{x_{\lambda(\xi)}}\E_\ep^{s}(X) = 
(-\Delta)^s u_{\ep\ep}^X(\xi).
\end{equation}
Moreover,
\begin{equation}\label{secondasuper}
\partial^2_{x_{\lambda(\xi)}^2}\E_\ep^{s}(X)
=- (-\Delta)^s \mu_{\ep\ep}^{\hat{\xi}}(\xi).
\end{equation}
and
\begin{equation}\label{terzasuper}
 \partial^2_{x_{\lambda(\xi)},x_{\lambda(\eta)}}\E_\ep^{s}(X)
 =(-\Delta)^s \mu_{\ep\ep}^{\eta}(\xi).
\end{equation}

\end{proposition}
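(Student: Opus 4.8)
The plan is to first establish the first--variation formula \eqref{primasuper} and then deduce both second variations \eqref{secondasuper}--\eqref{terzasuper} by differentiating it once more. The guiding principle is that, since $u_\ep^X$ is smooth and $\Lambda$-periodic (hence Lipschitz) for every $X\in\X^\Lambda$, the map $X\mapsto\E_\ep^s(X)$ is smooth on $\rs^\Lambda$, and that $(-\Delta)^s$ commutes with translations, with $\frac{\mathrm d}{\mathrm d x}$ and with convolution (being a Fourier multiplier), while annihilating constants. I will use repeatedly that perturbing the particle at $\xi$ produces the velocity $\partial_{x_{\lambda(\xi)}}u_\ep^X=\mu_\ep^\xi$: indeed $u^X$ drops by $1$ at each particle, so $u_\ep^X(x)=x-\sum_{p}\Theta_\ep(x-p)+\mathrm{const}$ with $\Theta_\ep$ a mollified Heaviside, and differentiating in the position $\xi$ of one particle (with its periodic copies) gives $\sum_{z}\rho_\ep(x-\xi-\Lambda z)=\mu_\ep^\xi(x)$, which is jointly smooth in $(x,\xi)$.

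For \eqref{primasuper} I would differentiate \eqref{enesupercrit} under the integral sign. This is licit for all $\tfrac12\le s<1$ because the integrand and its $\xi$-derivative are dominated, near the diagonal, by $C|x-y|^{1-2s}$ (since $u_\ep^X$ and its velocity are Lipschitz) and, away from it, by $C|x-y|^{-1-2s}$, both integrable, uniformly for $\xi$ in a neighbourhood. Differentiation produces exactly $\partial_{x_{\lambda(\xi)}}\E_\ep^s(X)=\mathcal{B}_s(\mu_\ep^\xi,u_\ep^X)$, where $\mathcal{B}_s(v,w):=\int_0^\Lambda\ud x\int_\R\frac{(v(x)-v(y))(w(x)-w(y))}{|x-y|^{1+2s}}\ud y$. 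The very periodicity-plus-Fubini symmetrization used for the second equality in \eqref{fvar1} then gives $\mathcal{B}_s(\mu_\ep^\xi,u_\ep^X)=\int_0^\Lambda \mu_\ep^\xi(x)\,(-\Delta)^su_\ep^X(x)\,\ud x$. Finally, since the standard mollifier $\rho$ is even, convolution against $\rho_\ep$ is self-adjoint on $\Lambda$-periodic functions; moving one factor of $\rho_\ep$ from $\mu^\xi$ onto $u_\ep^X$ and using $\rho_\ep\ast(-\Delta)^s=(-\Delta)^s\rho_\ep\ast$ turns this into $\big((-\Delta)^s(\rho_\ep\ast u_\ep^X)\big)(\xi)=(-\Delta)^su_{\ep\ep}^X(\xi)$, which is \eqref{primasuper}.

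Granted \eqref{primasuper}, both second variations follow by differentiating the map $X\mapsto(-\Delta)^su_{\ep\ep}^X\big(x_{\lambda(\xi)}\big)$. When we perturb a different particle $\eta\neq\xi$, the evaluation point $\xi$ is frozen and only the function varies; its velocity is $\partial_{x_{\lambda(\eta)}}u_{\ep\ep}^X=\mu_{\ep\ep}^\eta$, whence $\partial^2_{x_{\lambda(\xi)}x_{\lambda(\eta)}}\E_\ep^s(X)=(-\Delta)^s\mu_{\ep\ep}^\eta(\xi)$, i.e. \eqref{terzasuper}. When instead we perturb $\xi$ itself, two contributions appear: the change of the function, giving $(-\Delta)^s\mu_{\ep\ep}^\xi(\xi)$, and the motion of the evaluation point, giving $\frac{\mathrm d}{\mathrm d x}\big[(-\Delta)^su_{\ep\ep}^X\big](\xi)=(-\Delta)^s(u_{\ep\ep}^X)'(\xi)$. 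Since $(u_{\ep\ep}^X)'=1-\mu_{\ep\ep}^X$ and $(-\Delta)^s$ kills the constant, the latter equals $-(-\Delta)^s\mu_{\ep\ep}^X(\xi)$. Adding the two terms and using $\mu^X=\mu^\xi+\mu^{\hat\xi}$ gives $\partial^2_{x^2_{\lambda(\xi)}}\E_\ep^s(X)=(-\Delta)^s\mu_{\ep\ep}^\xi(\xi)-(-\Delta)^s\mu_{\ep\ep}^X(\xi)=-(-\Delta)^s\mu_{\ep\ep}^{\hat\xi}(\xi)$, which is \eqref{secondasuper}.

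The main obstacle is concentrated in the first variation: the symmetrization of $\mathcal{B}_s$ (resting on the $\Lambda$-periodicity of all functions and on Fubini, exactly as in Proposition \ref{prop:firstvarsub}) and the self-adjointness step that absorbs the extra mollification into $u_{\ep\ep}^X$, together with verifying that differentiation under the integral remains legitimate as $s\to1^-$. Once \eqref{primasuper} is secured, the second variations are a routine chain-rule computation combined with the commutation properties of $(-\Delta)^s$; the only point requiring care there is to keep the two distinct effects of a perturbation of $\xi$ separate — the variation of $u_{\ep\ep}^X$ and the displacement of the point at which $(-\Delta)^su_{\ep\ep}^X$ is evaluated.
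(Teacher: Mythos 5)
Your proof is correct, and while it lands on the same key identities as the paper, it gets there by a genuinely different mechanism at two points. For the first variation, the paper works with finite differences: it writes $u^{X+he_{\lambda(\xi)}}=u^X+\mathrm{sgn}(h)\chi_{A_h}$, expands the difference of squares to obtain $\E_\ep^s(X+he_{\lambda(\xi)})-\E_\ep^s(X)=\int_0^\Lambda\chi_{A_h,\ep}(-\Delta)^su_\ep^X\ud x+\frac12\iint\frac{(\chi_{A_h,\ep}(x)-\chi_{A_h,\ep}(y))^2}{|x-y|^{1+2s}}$, estimates the quadratic remainder by $Ch^2/\ep^2$ using the Lipschitz bound on $\chi_{A_h,\ep}$, and passes to the limit via $\frac1h\chi_{I_h}\weakstar\delta_\xi$; you instead differentiate under the integral sign with the velocity $\partial_{x_{\lambda(\xi)}}u_\ep^X=\mu_\ep^\xi$, which makes the quadratic remainder estimate unnecessary at the price of the (correctly verified) domination argument, and your symmetrization-plus-self-adjointness step is exactly the paper's periodicity/Fubini computation followed by its implicit use of evenness of $\rho$ in the identity $\int_0^\Lambda(\delta_\xi\ast\rho_\ep)(-\Delta)^su_\ep^X\ud x=(-\Delta)^su_{\ep\ep}^X(\xi)$ — you make that evenness explicit, which the paper tacitly assumes of its ``standard mollifier''. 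For the diagonal second variation, the routes genuinely diverge: the paper exploits translation invariance to replace the perturbation of $\xi$ by the opposite perturbation of all other particles (the vector $\hat e_{\lambda(\xi)}$), so that only $\mu^{\hat\xi}$ ever appears; you instead split the derivative into the variation of $u_{\ep\ep}^X$ (giving $(-\Delta)^s\mu_{\ep\ep}^\xi(\xi)$) and the motion of the evaluation point (giving $(-\Delta)^s(u_{\ep\ep}^X)'(\xi)=-(-\Delta)^s\mu_{\ep\ep}^X(\xi)$, since $(u_{\ep\ep}^X)'=1-\mu_{\ep\ep}^X$ and $(-\Delta)^s$ kills constants), and recombine via $\mu^X=\mu^\xi+\mu^{\hat\xi}$. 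The two devices encode the same screening/translation structure, but yours trades the global invariance argument for the local commutation $\frac{\ud}{\ud x}(-\Delta)^s=(-\Delta)^s\frac{\ud}{\ud x}$ on the smooth bounded function $u_{\ep\ep}^X$ (legitimate here, e.g. via the symmetrized kernel $2w(x)-w(x+t)-w(x-t)$ and the $C^\infty$ bounds on $u_{\ep\ep}^X$, a point worth one line in a final write-up). Your treatment of the mixed partial \eqref{terzasuper} coincides with the paper's.
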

\begin{proof}
We proceed as in the proof of Proposition \ref{prop:firstvarsub}.
 Let $h\neq 0$ and let  $I_{h}$ be the (for instance open) interval with extreme points $\xi$ and $\xi+h$. We set $A_{h}:=I_{h}+\Lambda\Z$.
By construction, for $|h|$ small enough, $X+h e_{\lambda(\xi)}\in\X^\Lambda$
and
 $u^{X+h e_{\lambda(\xi)}} =u^X + \text{sgn}(h) \chi_{A_h}$.
To simplify notation we consider only the case $h>0$.
Moreover, we set $\chi_{A_h,\ep}:=\chi_{A_h}\ast\rho_\ep$. Then,
\begin{equation}\label{unosuper}
\begin{aligned}
&\,\E_\ep^s(X+he_{\lambda(\xi)})-\E_\ep^s(X)\\
=&\,\frac{1}{2}\int_{0}^{\Lambda}\ud x\int_{\R}\frac{(\chi_{A_{h},\ep}(x)-\chi_{A_h,\ep}(y))\big((2u_\ep^X(x)+\chi_{A_h,\ep}(x))-(2u_\ep^X(y)+\chi_{A_h,\ep}(y))\big)}{|x-y|^{1+2s}}\ud y\\
=&\,2\int_{0}^{\Lambda}\ud x\,\chi_{A_h,\ep}(x)\int_{\R}\frac{u^X_\ep(x)-u^X_\ep(y)}{|x-y|^{1+2s}}\ud y+\frac{1}{2}\int_{0}^{\Lambda}\ud x\int_{\R}\frac{(\chi_{A_{h},\ep}(x)-\chi_{A_h,\ep}(y))^2}{|x-y|^{1+2s}}\ud y\\
=&\,\int_{0}^{\Lambda}\chi_{A_h,\ep}(x)(-\Delta)^su_\ep^X(x)\ud x+\frac{1}{2}\int_{0}^{\Lambda}\ud x\int_{\R}\frac{(\chi_{A_{h},\ep}(x)-\chi_{A_h,\ep}(y))^2}{|x-y|^{1+2s}}\ud y,
\end{aligned}
\end{equation}
where the last equality follows by the very definition of fractional laplacian in \eqref{laplafra}, noticing that the function $u_\ep^X$ is $C^\infty$.

On the one hand, notice that $\chi_{A_h,\ep}$ coincides with the $\Lambda$-periodic extension of the function $\chi_{I_h}\ast\rho_\ep$ and that $\frac{1}{h}\chi_{I_h}\weakstar\delta_{\xi}$ as $h\to 0$, so that
\begin{equation}\label{duesuper}
\lim_{h\to 0}\frac{1}{h}\int_{0}^{\Lambda}\chi_{A_h,\ep}(x)(-\Delta)^su_\ep^X(x)\ud x=\int_{0}^\Lambda (\delta_{\xi}\ast\rho_\ep)(x)(-\Delta)^su_\ep^X(x)\ud x=(-\Delta)^su_{\ep\ep}^X(\xi).
\end{equation}
On the other hand, the function $\chi_{A_h,\ep}$ is Lipschitz continuous with Lipschitz constant $\|\chi_{A_h,\ep}\|_{W^{1,\infty}}\le C\frac{h}{\ep}$, whence we deduce that
\begin{equation}\label{tresuper}
\begin{aligned}
&\,\frac{1}{2}\int_{0}^{\Lambda}\ud x\int_{\R}\frac{(\chi_{A_{h},\ep}(x)-\chi_{A_h,\ep}(y))^2}{|x-y|^{1+2s}}\ud y\\
=&\,\frac{1}{2}\int_{0}^{\Lambda}\ud x\int_{0}^\Lambda\frac{(\chi_{A_{h},\ep}(x)-\chi_{A_h,\ep}(y))^2}{|x-y|^{1+2s}}\ud y+\,\sum_{\newatop{z\in\Z}{z\neq 0}}\frac{1}{2}\int_{0}^{\Lambda}\ud x\int_{z\Lambda}^{(z+1)\Lambda}\frac{(\chi_{A_{h},\ep}(x)-\chi_{A_h,\ep}(y))^2}{|x-y|^{1+2s}}\ud y\\
\le&\,C\frac{h^2}{\ep^2}\int_{0}^{\Lambda}\ud x\int_{0}^\Lambda|x-y|^{1-2s}\ud y+C\frac{h^2}{\ep^2}\sum_{\newatop{z\in\Z}{z\neq 0}}\int_{0}^{\Lambda}\ud x\int_{0}^{\Lambda}\frac{1}{|x-y+\Lambda z|^{1+2s}}\ud y\le C\frac{h^2}{\ep^2}.
\end{aligned}
\end{equation}
By combining \eqref{unosuper} with \eqref{duesuper} and \eqref{tresuper}, we get \eqref{primasuper}.\\
We now turn to \eqref{secondasuper}. The proof is similar to \eqref{seconda}. As before let $\hat e_{\lambda(\xi)}$ denotes the vector with the $\lambda(\xi)$-th entry equal to $0$ and all the remaining ones equal to $1$. Again, we consider only the case $h>0$ (small enough) and we set $\hat I_h:=\bigcup_{\eta\in\sss(X)\setminus\{\xi\}}(\eta-h,\eta)$ and
$\hat A_h:= \hat I_h+\Lambda\Z$.
Then, for $h$ small enough, $u^{X-h\hat e_{\lambda(\xi)}}_{\ep\ep}\equiv u^X_{\ep\ep}-(\chi_{\hat A_h})_{\ep\ep}$.
By \eqref{primasuper}, using the invariance by global translations, we thus find
\begin{multline*}
 \partial^2_{x_{\lambda(\xi)}^2}\E_\ep^{s}(X) =\lim_{h\to 0} \frac{(-\Delta)^s u_{\ep \ep}^{X+h e_{\lambda(\xi)}}(\xi+h)-(-\Delta)^s u_{\ep \ep}^{X}(\xi)}{h}\\
= \lim_{h\to 0}\frac{(-\Delta)^s u_{\ep \ep}^{X+h \hat e_{\lambda(\xi)}}(\xi)-(-\Delta)^s u_{\ep \ep}^{X}(\xi)}{h}=-\lim_{h\to 0} \frac{(-\Delta)^s (\chi_{\hat A_h})_{\ep\ep}(\xi)}{h}\\
 =-2\lim_{h\to 0} \frac{1}{h}\int_{\R} \frac{((\chi_{\hat A_h})_{\ep\ep}(\xi)-\chi_{\hat A_h})_{\ep\ep}(y)}{|y-\xi|^{1+2s}} \ud y=-2\int_{\R} \frac{\mu_{\ep\ep}^{\hat{\xi}}(\xi)-\mu_{\ep \ep}^{\hat{\xi}}(y)}{|y-\xi|^{1+2s}} \ud y=-(-\Delta)^s \mu_{\ep\ep}^{\hat\xi}(\xi).
\end{multline*}
We finally prove \eqref{terzasuper}.
Let again $h>0$ and set $A'_h:=(\eta,\eta+h)+\Lambda\Z$; then, for $h$ small enough, $ u^{X+h e_\lambda(\eta)}_{\ep\ep}= u^{X}_{\ep\ep}+ (\chi_{A'_h})_{\ep\ep}$.
Then, by \eqref{primasuper} we obtain
\begin{equation*}
\begin{aligned}
 {\partial^2_{x_{\lambda(\xi)},x_{\lambda(\eta)}}}\E^{s}(X) =&\, \lim_{h\to 0} \frac{(-\Delta)^s u^{X+h e_\lambda(\eta)}_{\ep\ep}(\xi)- (-\Delta)^s u^{X}_{\ep\ep}(\xi)}{h}\\
 =&\, \lim_{h\to 0} \frac{(-\Delta)^s (\chi_{A'_h})_{\ep\ep}(\xi)}{h}
=(-\Delta)^s\mu^\eta_{\ep\ep}(\xi).
\end{aligned}
\end{equation*}
\end{proof}
\begin{remark}
\rm{ Let us notice that sending $\ep\to0$, formulas \eqref{primasuper}, \eqref{secondasuper} and \eqref{terzasuper} converge to \eqref{prima}, \eqref{seconda} and \eqref{terza}.}
\end{remark}
To any configuration $X\in\X^\Lambda$ we associate the minimal distance between its entries 
\begin{equation}\label{dmax}
\delta(X):=\min_{\newatop{\lambda,\lambda'\in\{1,\ldots,\Lambda\}}{\lambda\neq \lambda'}}\nper{x_\lambda-x_{\lambda'}},
\end{equation}
where the norm $\nper{\cdot}$ is defined in \eqref{nperdef}. We notice that $\delta(X)= \min_{i\neq j} |\mathsf x_i - \mathsf x_j|$ and that 
$\sup_{X\in\X^\Lambda}\delta(X) =  1$.
\begin{proposition}\label{varscepossuper}
Let $\frac 1 2\le s < 1$, $\Lambda\in\N$ and $\ep>0$. Then, for every $X\in\rs^\Lambda$ and for every $h\in\R^\Lambda$ we have
\begin{equation}\label{posvarsecsuper}
\langle\nabla^2\E_\ep^s(X)h,h\rangle=\sum_{\newatop{\xi,\eta\in\sss(X)}{\xi\neq\eta}}a_\ep^s(\xi,\eta)(h_{\lambda(\xi)}-h_{\lambda(\eta)})^2,
\end{equation}
where
\begin{equation*}
a^s_\ep(\xi,\eta):=2\int_{\R}\frac{\mu_{\ep\ep}^\eta(y)-\mu_{\ep\ep}^\eta(\xi)}{|y-\xi|^{1+2s}}\ud y\qquad\textrm{for every }\xi,\eta\in \sss(X) \textrm{ with }\xi\neq\eta.
\end{equation*}
Assume now $\delta(X)\ge 4\ep$; then the following facts hold true.


If $h\cdot e=0$, then
\begin{equation}\label{stimasuperpeggio}
\langle\nabla^2\E_\ep^s(X)h,h\rangle\ge \Gamma(s) \Lambda^{-2s}|h|^2,
\end{equation}
for some positive constant $\Gamma(s)$ depending only on $s$.

Moreover, for every $X\in\cs^{\Lambda}$, then
\begin{equation}\label{stimasupermeglio}
\langle\nabla^2\E_\ep^s(X)h,h\rangle\le \gamma(s)\Lambda^{-2s}|h|^2\qquad\textrm{for every }h\in\R^{\Lambda}\textrm{ with }h\cdot e=0,
\end{equation}
for some positive constant $\gamma(s)$ depending only on $s$.
\end{proposition}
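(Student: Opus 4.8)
The three assertions follow the template of Proposition \ref{varsecpossub} and Remark \ref{subcomesuper}; the only genuinely new ingredient is the two-sided control of the mollified coefficients $a^s_\ep$, which is where the separation hypothesis $\delta(X)\ge 4\ep$ enters. I would first establish the identity \eqref{posvarsecsuper} directly from Proposition \ref{prop:firstvarsuper}. By \eqref{terzasuper} and the definition of $a^s_\ep$, one has $\partial^2_{x_{\lambda(\xi)},x_{\lambda(\eta)}}\E_\ep^s(X)=(-\Delta)^s\mu^\eta_{\ep\ep}(\xi)=-a^s_\ep(\xi,\eta)$ for $\xi\neq\eta$. Since $\mu^{\hat\xi}=\mu^X-\mu^\xi=\sum_{\eta\in\sss(X)\setminus\{\xi\}}\mu^\eta$, the diagonal formula \eqref{secondasuper} reads $\partial^2_{x_{\lambda(\xi)}^2}\E_\ep^s(X)=-(-\Delta)^s\mu^{\hat\xi}_{\ep\ep}(\xi)=\sum_{\eta\neq\xi}a^s_\ep(\xi,\eta)$. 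The coefficients are symmetric, $a^s_\ep(\xi,\eta)=a^s_\ep(\eta,\xi)$, because $\E^s_\ep$ is smooth on $\rs^\Lambda$ and mixed partials commute. Substituting these into $\langle\nabla^2\E^s_\ep(X)h,h\rangle=\sum_\xi\partial^2_{x_{\lambda(\xi)}^2}\E^s_\ep(X)\,h^2_{\lambda(\xi)}+\sum_{\xi\neq\eta}\partial^2_{x_{\lambda(\xi)},x_{\lambda(\eta)}}\E^s_\ep(X)\,h_{\lambda(\xi)}h_{\lambda(\eta)}$ and reorganizing exactly as in the subcritical case yields \eqref{posvarsecsuper}.

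For the lower bound \eqref{stimasuperpeggio} the assumption $\delta(X)\ge 4\ep$ is crucial. Since $\rho_{\ep\ep}=\rho_\ep\ast\rho_\ep$ is supported in $(-2\ep,2\ep)$, the function $\mu^\eta_{\ep\ep}$ is supported in $\bigcup_{z}(\eta+\Lambda z-2\ep,\eta+\Lambda z+2\ep)$; as $\nper{\xi-\eta}\ge\delta(X)\ge 4\ep>2\ep$, the point $\xi$ lies outside this support, so $\mu^\eta_{\ep\ep}(\xi)=0$ and hence $a^s_\ep(\xi,\eta)=2\int_\R\frac{\mu^\eta_{\ep\ep}(y)}{|y-\xi|^{1+2s}}\,\ud y>0$. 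Keeping only the contribution of the periodic image of $\eta$ closest to $\xi$ (at periodic distance $\le\Lambda/2$) and using $\int_\R\rho_{\ep\ep}=1$ together with $\ep<\Lambda$, I obtain $a^s_\ep(\xi,\eta)\ge c(s)\Lambda^{-1-2s}$ for all distinct $\xi,\eta\in\sss(X)$. Inserting this into \eqref{posvarsecsuper} and invoking the algebraic identity \eqref{stimaovvia2} (valid for $h\cdot e=0$) gives $\langle\nabla^2\E^s_\ep(X)h,h\rangle\ge c(s)\Lambda^{-1-2s}\sum_{\lambda\neq\lambda'}(h_\lambda-h_{\lambda'})^2=c(s)\Lambda^{-2s}|h|^2$, which is \eqref{stimasuperpeggio}.

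For the upper bound \eqref{stimasupermeglio}, on $X\in\cs^\Lambda$ one has $\delta(X)=1$, so $\delta(X)\ge 4\ep$ amounts to $\ep\le\tfrac14$. The same separation, now combined with $\nper{\xi-\eta}\ge 1\ge 4\ep$ (whence $|y-\xi|\ge\nper{\xi-\eta}-2\ep\ge\tfrac12\nper{\xi-\eta}$ on the support of $\mu^\eta_{\ep\ep}$), yields the matching upper estimate $a^s_\ep(\xi,\eta)\le\Gamma(s)\,\nper{\xi-\eta}^{-(1+2s)}$. I then test with the piecewise-affine competitor of Remark \ref{subcomesuper}, namely $h_\lambda:=\nper{\lambda}-\frac1\Lambda\sum_{\lambda'=0}^{\Lambda-1}\nper{\lambda'}$, which satisfies $h\cdot e=0$, $h\neq0$ and $|h|^2\ge C\Lambda^3$. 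Using $|\nper{\lambda}-\nper{\lambda'}|\le\nper{\lambda-\lambda'}$ and the bound on $a^s_\ep$, the computation reproduces verbatim that of \eqref{darichiamareinsuper2}, giving $\langle\nabla^2\E^s_\ep(X)h,h\rangle\le\Gamma(s)\sum_{\lambda\neq\lambda'}\nper{\lambda-\lambda'}^{1-2s}\le\Gamma(s)\Lambda^{3-2s}$; dividing by $|h|^2$ establishes the claimed bound. As noted in Remark \ref{subcomesuper}, the point of the affine (rather than constant) interpolation is that $\nper{\lambda-\lambda'}^{1-2s}$ remains summable to order $\Lambda^{3-2s}$ throughout the range $\tfrac12\le s<1$.

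I expect the only real work to be the two-sided comparison between $a^s_\ep$ and the pure power law $\nper{\xi-\eta}^{-(1+2s)}$; everything else is inherited from Proposition \ref{varsecpossub} and Remark \ref{subcomesuper}. The separation hypothesis $\delta(X)\ge 4\ep$ is exactly what makes this comparison clean: it forces $\mu^\eta_{\ep\ep}(\xi)=0$, removing the potentially sign-indefinite principal-value contribution and leaving a manifestly positive integral that is easy to bound from above and below by the corresponding power of $\nper{\xi-\eta}$.
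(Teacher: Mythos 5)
Your proposal is correct and follows essentially the paper's own route: both arguments hinge on the observation that $\delta(X)\ge 4\ep$ forces $\mu^\eta_{\ep\ep}(\xi)=0$ and makes $a^s_\ep(\xi,\eta)$ two-sidedly comparable to the unmollified power-law coefficients (the paper phrases this as $2^{-1-2s}a^s(\xi,\eta)\le a^s_\ep(\xi,\eta)\le 2^{1+2s}a^s(\xi,\eta)$, with $y\in B_{2\ep}(\eta+\Lambda z)$ and $|\eta-\xi+\Lambda z|\ge 4\ep$, and then cites \eqref{darichiamareinsuper}--\eqref{darichiamareinsuper2} of Remark \ref{subcomesuper}, which is exactly the computation you inline via \eqref{stimaovvia2} and the test vector $h_\lambda=\nper{\lambda}-\frac1\Lambda\sum_{\lambda'=0}^{\Lambda-1}\nper{\lambda'}$). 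The only remark worth making is that your choice of a single test vector implicitly (and correctly) reads the upper bound \eqref{stimasupermeglio} as existential in $h$, in line with \eqref{stimasubmeglio} and \eqref{darichiamareinsuper2} and with what the proof actually delivers, rather than as the universal statement the displayed quantifier literally suggests.
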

\begin{proof}
Identity \eqref{posvarsecsuper} is an immediate consequence of Proposition \ref{prop:firstvarsuper}.
Now, since $\delta(X)\ge 4\ep$, we have that $\mu_{\ep\ep}^\eta(\xi)=0$ for every $\xi,\eta\in\sss(X)$ with $\xi\neq\eta$ so that 
\begin{equation}\label{utile}
a_\ep^s(\xi,\eta)=2\int_{\R}\frac{\mu_{\ep\ep}^\eta(y)}{|y-\xi|^{1+2s}}\ud y= 2\sum_{z\in\Z}\int_{B_{2\ep}(\eta)}\frac{\big(\rho_\ep\ast(\rho_\ep\ast\delta_\eta)\big)(y)}{|y-\xi+\Lambda z|^{1+2s}}\ud y.
\end{equation}
Let $\xi,\eta\in\sss(X)$ with $\xi\neq\eta$ and
note that for every $y\in B_{2\ep}(\eta)$
\begin{equation*}
\frac{2^{-1-2s}}{|\eta-\xi+\Lambda z|^{1+2s}}\le \frac{1}{|y-\xi+\Lambda z|^{1+2s}}\le \frac{2^{1+2s}}{|\eta-\xi+\Lambda z|^{1+2s}},
\end{equation*}
so that, by \eqref{utile},
\begin{equation}\label{comportabene}
2^{-1-2s}a^s(\xi,\eta)\le a_\ep^s(\xi,\eta)\le 2^{1+2s}a^s(\xi,\eta),
\end{equation}
where the coefficients
$a^s(\xi,\eta)$ are defined in \eqref{coeffa}.
As a consequence, in view of Remark \ref{subcomesuper} and of \eqref{comportabene}, the estimates \eqref{darichiamareinsuper} and \eqref{darichiamareinsuper2} yield \eqref{stimasuperpeggio} and  \eqref{stimasupermeglio}, respectively.
This concludes the proof.
\end{proof}

\section{Minimal configurations}
By symmetry arguments it is straightforward to check that the configurations in $\cs^\Lambda$ (see  \eqref{crit}) are critical points of the energy functionals; more precisely, for all $X\in \cs^\Lambda$
\begin{equation}\label{laplafranullo}
(-\Delta)^s u^X(\xi)=0, \quad (-\Delta)^s\bar u^X_{\ep\ep}(\xi)=0 \qquad \textrm{ for any } \xi \in \sss(X).
\end{equation}

The next theorem establishes that  for $s<\frac 12$,   the ground states of $\E^s$ coincide with $\cs^\Lambda$.
\begin{theorem}\label{minsubcrit}
Let $0<s<\frac 1 2$ and $\Lambda\in\N$. Then, the set of critical points of the energy $\E^s$ in $\rs^\Lambda$ is given by
$\cs^\Lambda$. Moreover, $\cs^\Lambda$ coincides with  the set of all  local and global minimizers of $\E^s$ in $\X^\Lambda$.
\end{theorem}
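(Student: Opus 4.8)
The plan is to read off from Proposition~\ref{prop:firstvarsub} and Proposition~\ref{varsecpossub} that $\E^s$ is strictly convex transverse to global translations, so that it has a unique critical point (up to translation) in $\rs^\Lambda$, and then to use Lemma~\ref{missinglemma2} to discard configurations with collisions. First, the equispaced configurations are critical: by \eqref{prima} the gradient of $\E^s$ at a regular $X$ has components $(-\Delta)^su^X(\xi)$, and these all vanish on $\cs^\Lambda$ by \eqref{laplafranullo}, so $\cs^\Lambda$ is contained in the critical set of $\E^s$ in $\rs^\Lambda$.

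For the reverse inclusion I would show uniqueness of the critical point modulo translations. Fixing the gauge $x_1=0$ together with the non-decreasing ordering, a regular configuration is described by $(x_2,\dots,x_\Lambda)$ ranging in the open convex set $U:=\{0<x_2<\dots<x_\Lambda<\Lambda\}$ (the periodic gap $\Lambda-x_\Lambda$ stays positive), on which $\E^s$ is smooth. By \eqref{posvarsecsub} the Hessian quadratic form equals $\sum_{\xi\neq\eta}a^s(\xi,\eta)(h_{\lambda(\xi)}-h_{\lambda(\eta)})^2$ with all $a^s(\xi,\eta)>0$ by \eqref{coeffa}; since $\xi\mapsto\lambda(\xi)$ is a bijection onto $\{1,\dots,\Lambda\}$ for regular $X$, this form vanishes exactly when all entries of $h$ coincide, i.e.\ its kernel is precisely the line spanned by $e$. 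As this direction is transverse to $\{h_1=0\}$, the Hessian of $\E^s|_U$ is positive definite and $\E^s|_U$ is strictly convex. Moreover, by the translation invariance \eqref{invariant} one has $\nabla\E^s\cdot e\equiv 0$, so a point of $U$ is critical for $\E^s|_U$ if and only if it is critical for $\E^s$ in $\rs^\Lambda$. A smooth strictly convex function on a convex open set has at most one critical point, which if it exists is the global minimum there; since the equispaced configuration $x_\lambda=\lambda-1$ lies in $U$ and is critical, it is the unique critical point of $\E^s$ in this chart and the minimizer of $\E^s$ over regular configurations in it. Since translations and relabellings leave both $\E^s$ and the notion of criticality invariant, undoing the gauge gives that the critical set of $\E^s$ in $\rs^\Lambda$ is exactly $\cs^\Lambda$.

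It then remains to identify the minimizers over all of $\X^\Lambda$. If $X\in\X^\Lambda$ has a point $\xi$ with $\mm(\xi)\ge 2$, Lemma~\ref{missinglemma2} gives one-sided derivatives $\partial^{\pm}_{x_m}\E^s(X)=\mp\infty$ for $m\in\inde(\xi)$, so splitting such a particle in either direction strictly decreases the energy; hence $X$ is not a local minimizer, and every local minimizer lies in $\rs^\Lambda$, where it is a critical point and therefore belongs to $\cs^\Lambda$. Finally, since $\E^s$ is finite and continuous on $\X^\Lambda$ and, depending only on the empirical measure, descends to a continuous function on the compact torus $(\R/\Lambda\Z)^\Lambda$, a global minimizer exists; being in particular a local minimizer it lies in $\cs^\Lambda$, and since \eqref{invariant} makes $\E^s$ constant along the translation orbit $\cs^\Lambda$, all of $\cs^\Lambda$ consists of global (hence local) minimizers. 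Together with the previous inclusion this yields that $\cs^\Lambda$ is exactly the set of local and global minimizers.

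The main obstacle is the convexity step: one must verify that the kernel of $\nabla^2\E^s$ is exactly the translation direction and that, after fixing a gauge, the regular configurations form a genuinely convex domain, so that the positivity of the Hessian on $e^{\perp}$ furnished by Proposition~\ref{varsecpossub} upgrades from local to \emph{global} uniqueness of the critical point. The remaining points---excluding collisions via Lemma~\ref{missinglemma2} and invoking compactness for existence---are comparatively routine.
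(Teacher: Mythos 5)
Your proposal is correct and follows essentially the same route as the paper: Lemma~\ref{missinglemma2} rules out local minimizers outside $\rs^\Lambda$, strict positivity of the second variation modulo translations (Proposition~\ref{varsecpossub}) gives uniqueness of the critical point in $\rs^\Lambda$ up to translation, and \eqref{prima} with \eqref{laplafranullo} identifies the critical set with $\cs^\Lambda$, constancy of $\E^s$ on $\cs^\Lambda$ then upgrading to global minimality. Your gauge-fixed convexity argument on the ordered simplex simply spells out the uniqueness step the paper asserts in one line (only note that \eqref{posvarsecsub} is stated for $\sss(X)\subset(0,\Lambda)$, so with the gauge $x_1=0$ you should additionally invoke translation invariance of the Hessian, a harmless adjustment).
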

\begin{proof}
Let $X$ be a local minimizer of $\E^s$ in $\X^\Lambda$; by Lemma \ref{missinglemma2} we have immediately that $X\in\rs^\Lambda$, so that it is a critical point of the energy. Since, in view of \eqref{stimasubpeggio}, the second variation of $\E^s$ is strictly positive modulo rigid translations, there is at most one - up to rigid translations - critical point of the energy $\E^s$. This fact, together with    \eqref{prima} and \eqref{laplafranullo} implies that the set of critical points coincides with $\cs^\Lambda$. In turn $X\in \cs^\Lambda$ and, since $\E^s$ is constant on $\cs^\Lambda$,  $X$ is a global minimizer.
\end{proof}
In the next theorem we consider  the critical cases $s\ge \frac 1 2$.
\begin{theorem}\label{mincrit}
Let $\frac 1 2 \le s<1$ and $\Lambda\in\N$. There exists $\bar \ep>0$ such that for every $0<\ep<\bar\ep$ the set of global minimizers of $\E_\ep^s$ is given by $\cs^\Lambda$.
\end{theorem}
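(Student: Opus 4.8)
The plan is to combine two ingredients: the strict convexity of $\E_\ep^s$ on the region of well-separated configurations, already encoded in Proposition~\ref{varscepossuper}, together with an energy blow-up estimate that rules out configurations in which some particles come closer than $4\ep$. First I would note that $\E_\ep^s$ is continuous on $\X^\Lambda$ (the mollified potential $u_\ep^X$ depends continuously on $X$, even across collisions) and, being invariant under global translations by \eqref{invariant}, descends to a continuous function on the space of configurations modulo translation, which is compact; hence a global minimizer exists. It then suffices to prove two facts: (i) for $\ep$ small every global minimizer lies in the closed set $R_\ep:=\{X\in\X^\Lambda:\delta(X)\ge 4\ep\}$, and (ii) on $R_\ep$ the only minimizers are the configurations of $\cs^\Lambda$.

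For (ii) I would pass to gap coordinates. Writing the ordered configuration through its consecutive periodic gaps $g_1,\dots,g_\Lambda\ge 0$ with $\sum_i g_i=\Lambda$, one has $\delta(X)=\min_i g_i$, so $R_\ep$ corresponds to the convex simplex $\{g_i\ge 4\ep,\ \sum_i g_i=\Lambda\}$. By \eqref{posvarsecsuper} and \eqref{stimasuperpeggio}, for $X\in R_\ep$ the Hessian $\nabla^2\E_\ep^s(X)$ is positive definite on $\{h\cdot e=0\}$ and degenerate exactly along the translation direction $e$; since a perturbation of the gaps corresponds precisely to a position perturbation modulo $e$, this means that $\E_\ep^s$ is strictly convex as a function on the gap simplex. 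For $\ep<\frac14$ the equi-spaced configurations have all $g_i=1>4\ep$, hence lie in the interior of the simplex, and by \eqref{laplafranullo} they are critical points. A strictly convex function on a convex set with an interior critical point attains its minimum only there, so $\cs^\Lambda$ is exactly the set of minimizers of $\E_\ep^s$ over $R_\ep$.

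The main obstacle is (i), namely showing $\inf_{\delta(X)<4\ep}\E_\ep^s(X)>\E_\ep^s(\cs^\Lambda)$ for all sufficiently small $\ep$. Here I would use the blow-up analysis of the self-energy previewed in the introduction. On the one hand, an equi-spaced configuration has $\E_\ep^s(\cs^\Lambda)=\Lambda\,\sigma^s(\ep)+O(1)$, the $\Lambda$ well-separated particles each contributing the single-particle cost $\sigma^s(\ep)$ of \eqref{costosigma} (Proposition~\ref{quasigamma}) while the mutual interactions stay bounded as $\ep\to0$ (this is the convergence $\E_\ep^s-\Lambda\sigma^s(\ep)\to\W_0^s$). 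On the other hand, if two particles lie within distance $4\ep$ their mollifications interact at scale $\ep$ essentially as a single charge of multiplicity $2$, whose self-energy is $\sim 4\,\sigma^s(\ep)$ rather than $2\,\sigma^s(\ep)$; more generally a packet of multiplicity $m$ produces self-energy $\sim m^2\sigma^s(\ep)$. Summing over packets and using $\sum_i m_i^2\ge \Lambda+2$ as soon as one packet has $m\ge 2$, any configuration with $\delta(X)<4\ep$ has energy at least $(\Lambda+c)\sigma^s(\ep)-C$ for some $c>0$; since $\sigma^s(\ep)\to+\infty$ as $\ep\to0$ (because $s\ge\frac12$), this exceeds $\E_\ep^s(\cs^\Lambda)$ once $\ep$ is small, giving (i). The delicate point is making the informal ``packet of multiplicity $m$'' estimate rigorous for particles that are merely close rather than coincident, which I would handle by a monotonicity/comparison argument showing the short-range pairwise contribution is, up to bounded corrections, non-increasing in the separation, so that the clustered limit dominates. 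Combining (i) and (ii), the global minimum is attained inside $R_\ep$ and equals $\E_\ep^s(\cs^\Lambda)$, and the minimizers are exactly $\cs^\Lambda$, as claimed.
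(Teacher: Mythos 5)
Your overall architecture coincides with the paper's: compare the energy of equi-spaced configurations, $\E_\ep^s(\cs^\Lambda)\le\Lambda\sigma^s(\ep)+\bar C(s)$ (the paper's \eqref{base}), against a blow-up lower bound for clustered configurations coming from Proposition \ref{quasigamma}, to confine minimizers to the region $\delta(X)>4\ep$; then conclude by strict convexity there. Your step (ii) is exactly the paper's argument, made slightly more explicit: the paper likewise observes that $\{\delta(X)>\bar\delta\}$ is convex ``if the configurations are described in terms of the distances between nearest neighboring particles'' (your gap coordinates), uses \eqref{posvarsecsuper}--\eqref{stimasuperpeggio} for strict positivity of the Hessian modulo the translation direction $e$, and \eqref{laplafranullo} together with \eqref{primasuper} to identify $\cs^\Lambda$ as the unique critical set, hence the set of minimizers. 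Your preliminary remark on existence of a minimizer (continuity of $\E_\ep^s$ across collisions plus compactness modulo translations) is a sound addition which the paper leaves implicit.

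The genuine gap is in your step (i). You claim that $\delta(X)<4\ep$ forces $\E_\ep^s(X)\ge(\Lambda+c)\,\sigma^s(\ep)-C$, i.e.\ an excess of order $\sigma^s(\ep)\to+\infty$, and you yourself flag that the ``packet of multiplicity $m$'' heuristic for merely close (not coincident) particles is the delicate point, deferring it to an unspecified ``monotonicity/comparison argument''. That estimate is not supplied by anything you cite: Proposition \ref{quasigamma} is a statement for a \emph{fixed} small threshold $\delta$, with a $\liminf$ taken as $\ep\to0$ at fixed $\delta$, so you cannot substitute $\delta=4\ep$ into \eqref{theclaim}; as stated, the proposition gives an excess $C(s)\,\sigma^s(\delta)$ that is constant in $\ep$, not one growing like $\sigma^s(\ep)$. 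So as written, the crucial separation step rests on an unproven (though morally correct) lemma. The repair is already at hand and is precisely the paper's route: you do not need the sharp scale-$\ep$ estimate. Fix $\bar\delta$ so small that $C(s)\,\sigma^s(\bar\delta)/2>\bar C(s)$, apply Proposition \ref{quasigamma} at this fixed $\bar\delta$ to get, for all $\ep<\bar\ep(\bar\delta)$, the bound $\inf_{\delta(X)<\bar\delta}\E_\ep^s(X)\ge\Lambda\sigma^s(\ep)+C(s)\sigma^s(\bar\delta)/2$, and then take $\ep<\min\{\bar\ep,\bar\delta/4\}$. Since $\{\delta(X)<4\ep\}\subset\{\delta(X)\le\bar\delta\}$, every minimizer satisfies $\delta(X)>\bar\delta>4\ep$, and your step (ii) applies verbatim. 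With this substitution your proof is complete and coincides with the paper's.
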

Before proving Theorem \ref{mincrit} we state and prove the following result, providing an asymptotic ``expansion'' of the minimal energy $\E^s_\ep$ as $\ep\to 0$.
To this end, we recall the definition of $\delta(X)$ in \eqref{dmax} and, for every $\frac 1 2\le s<1$, $\delta>0$, we set
\begin{equation}\label{costosigma}
\sigma^s(\delta):=\left\{\begin{array}{ll}
|\log \delta|&\textrm{if }s=\frac 1 2\\
&\\
\frac{2^{1-2s}}{2s(2s-1)}\delta^{1-2s}&\textrm{if }\frac 1 2<s<1.
\end{array}\right.
\end{equation}
\begin{proposition}\label{quasigamma}
Let $\frac 1 2\le s<1$ and $\Lambda\in\N$.
Then, for $\delta$ small enough (depending on $\Lambda$ and $s$),
\begin{equation}\label{theclaim}
\liminf_{\ep\to 0}\Big(\inf_{X\in\X^\Lambda:\delta(X)\le \delta}\E_\ep^s(X)-\Lambda\sigma^s(\ep)\Big)\ge C(s) \sigma^s(\delta),
\end{equation}
for some positive constant $C(s)$ depending only on $s$. 
\end{proposition}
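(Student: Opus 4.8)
The plan is to reduce the statement to a single pointwise-in-$X$ estimate and then pass to the limit. Precisely, I would prove that there are constants $C(s)>0$ and $C'(\Lambda,s)$, and a modulus $o_\ep(1)\to0$ as $\ep\to0$ (uniform on $\{\delta(X)\le\delta\}$), such that
\[
\E_\ep^s(X)\ \ge\ \Lambda\,\sigma^s(\ep)+C(s)\,\sigma^s(\delta(X))-C'(\Lambda,s)-o_\ep(1)\qquad\text{for all }X\in\X^\Lambda .
\]
Since $\sigma^s$ is nonincreasing (see \eqref{costosigma}) and $\delta(X)\le\delta$ forces $\sigma^s(\delta(X))\ge\sigma^s(\delta)$, taking the infimum over $\{\delta(X)\le\delta\}$ and then the liminf in $\ep$ gives $\liminf_{\ep\to0}(\inf\E_\ep^s-\Lambda\sigma^s(\ep))\ge C(s)\sigma^s(\delta)-C'(\Lambda,s)$. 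Choosing $\delta$ small (depending on $\Lambda$ and $s$) so that $C(s)\sigma^s(\delta)\ge 2C'(\Lambda,s)$, which is possible because $\sigma^s(\delta)\to+\infty$ as $\delta\to0$, then yields \eqref{theclaim} with final constant $\tfrac12C(s)$, which depends only on $s$.

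For the divergent part I would use the identity $u_\ep^X(x)-u_\ep^X(y)=(x-y)-N_\ep(y,x)$, where $N_\ep(y,x):=\int_y^x\mu^X_\ep(t)\,\ud t$ is the mollified number of particles in $(y,x)$, and restrict the nonnegative Gagliardo integrand to disjoint ``clean'' windows. Writing the ordered particles of a period as $p_1<\dots<p_\Lambda$ with gaps $g_i$, on the window $x\in(p_i+\ep,p_{i+1}-\ep)$, $y\in(p_{i-1}+\ep,p_i-\ep)$ one has $N_\ep\equiv1$, so the integrand equals $\frac{|(x-y)-1|^2}{|x-y|^{1+2s}}$. Bounding it below by $\frac{1-2(x-y)}{|x-y|^{1+2s}}$ and carrying out the explicit kernel integral, which in the uncapped case gives exactly $\int_\ep^{\infty}\frac{(t+\ep)^{-2s}}{2s}\,\ud t=\sigma^s(\ep)$, shows that these $\Lambda$ disjoint windows contribute $\Lambda\sigma^s(\ep)$ up to a remainder controlled by $\sum_i\sigma^s(g_i)$ (the loss from truncating each window at the adjacent gaps) and by bounded lower-order terms. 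This is precisely why $\sigma^s$ is defined as in \eqref{costosigma}.

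The surplus $C(s)\sigma^s(\delta(X))$ comes from the closest pair. Let $\xi_1,\xi_2$ realize $\delta(X)=d$, i.e.\ $\nper{\xi_1-\xi_2}=d$. I would capture it in two complementary ways. Geometrically, on the ``double-straddle'' window $x>\xi_2+\ep$, $y<\xi_1-\ep$ one has $N_\ep\equiv2$, so the integrand carries the weight $|(x-y)-2|^2\approx4$ at scales $\gtrsim d$, and the same $\is^s$-type integral as in \eqref{interazione} produces a contribution of order $4\,\sigma^s(d)$, strictly larger than the truncation loss of order $2\,\sigma^s(d)$ suffered by the two windows of the pair. Analytically, and this is the clean way to fix the sign, I would integrate the first variation \eqref{primasuper}: pulling $\xi_2$ away from $\xi_1$ from separation $d$ up to a fixed $\delta_0$ along a direction into a large gap, the stored energy equals $\int_d^{\delta_0}|\partial_{x_{\lambda(\xi_2)}}\E_\ep^s|\,\ud r$, and since the dominant term is the repulsion $(-\Delta)^s\mu^{\xi_1}_{\ep\ep}(\xi_2)\sim r^{-2s}$ of two like ``charges'' (Proposition \ref{prop:firstvarsuper}), this is $\ge c(s)\int_d^{\delta_0}r^{-2s}\,\ud r-C=C(s)\sigma^s(d)-C$; the smearing is harmless since $r\ge d\gg\ep$ in the limit. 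For $s=\tfrac12$ the same computation gives $\int_d^{\delta_0}r^{-1}\,\ud r=\log(\delta_0/d)$, matching $\sigma^{1/2}(d)=|\log d|$.

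The main obstacle is the clean, uniform-in-$X$ separation of self-energy from interaction, and in particular the treatment of clusters. When several particles sit within $\delta$, each clean window must be truncated at a small scale and loses $\sim\sigma^s(g_i)$, so a naive per-particle accounting can be overwhelmed for large $\Lambda$; one must verify that the pairwise surpluses beat the truncation losses, i.e.\ the $m^2$-versus-$m$ effect (a multiplicity-$m$ cluster stores $\sim m^2\sigma^s$ while costing only $\sim m\sigma^s$). This is cleanest to enforce via the sign-definite force integration above applied to each close pair, or by deforming $X$ to an equispaced reference and bounding the intermediate work, noting that the equispaced configuration has fixed gaps and hence energy $\Lambda\sigma^s(\ep)+O_\ep(1)$. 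The second delicate point is quantifying the remainder $o_\ep(1)$ uniformly on $\{\delta(X)\le\delta\}$, for which the uniform convergence of the mollified fractional Laplacian in Remark \ref{bendefi} is exactly the tool, using that on this set the relevant distances stay $\gg\ep$ as $\ep\to0$.
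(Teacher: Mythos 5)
Your skeleton overlaps substantially with the paper's proof: the identity $u_\ep^X(x)-u_\ep^X(y)=x-y+\mu^X((x,y))$ away from the $\ep$-cores is exactly \eqref{imp}, your ``clean windows'' are the annuli $A_{\ep,r}(\xi_k)$ of \eqref{meglioI1}, your per-particle extraction of $\sigma^s(\ep)$ with bounded cross terms is \eqref{contiinu}--\eqref{unicocontosuper}, and absorbing $C'(\Lambda,s)$ by taking $\delta$ small is how Theorem \ref{mincrit} later uses the proposition. Where you genuinely diverge is the source of the surplus $C(s)\sigma^s(\delta)$: for $s>\frac12$ the paper reads it off the $r^{1-2s}$ correction to the truncated self-interaction with $r<\delta(X)/2$ (see \eqref{sifastimadopo}), whereas your bookkeeping records that truncation as a \emph{loss} and recovers the surplus from the $N_\ep\equiv 2$ double-straddle region (weight $4$ beating the two one-sided truncation losses) or from integrating the first variation; for $s=\frac12$ the paper instead runs a multiscale, ball-construction-type argument, counting connected components of $\bigcup_\lambda B_{r^p}(x_\lambda)$ across exponents $p_l$ and exploiting $|[\mu_j]|^2\ge |[\mu_j]|+3$, which has no counterpart in your outline.

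There are, however, two genuine gaps. First, the infimum in \eqref{theclaim} is taken at \emph{fixed} $\ep$ over all $X$ with $\delta(X)\le\delta$, so it includes configurations with $\delta(X)=0$ and with $0<\delta(X)\lesssim\ep$; your uniformity claim that ``the relevant distances stay $\gg\ep$ as $\ep\to0$'' is simply false on this set, your clean windows around the closest pair are empty once $\delta(X)\le 2\ep$, and Proposition \ref{prop:firstvarsuper} (hence your force integration) is stated for $X\in\rs^\Lambda$ with separations exceeding the mollification scale. The paper disposes of precisely this regime through the multiplicity inequality \eqref{megliosing}, which is strict at collisions, so that the surplus there is a full extra $\sigma^s(\ep)$, dominating $\sigma^s(\delta)$; you need this, or an ``effective multiplicity at scale $\ep$'' substitute, and it is absent. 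Second, the cluster bookkeeping that you yourself call ``the main obstacle'' is acknowledged but not resolved: along your deformation the derivative $\partial_{x_{\lambda(\xi_2)}}\E_\ep^s$ equals $c\,r^{-2s}$ only up to the forces exerted by \emph{all} other particles, which are of the same order $r'^{-2s}$ whenever other gaps are also $\le\delta$ (e.g.\ a cluster of $m$ particles within $\delta$); choosing the extreme particle of a maximal cluster makes the net intra-cluster force sign-definite, but then the admissible displacement is capped at the next gap and one must iterate over scales --- exactly the multiscale decomposition that occupies most of the paper's $s=\frac12$ argument --- and in addition the separated base case $\E_\ep^s\ge\Lambda\sigma^s(\ep)-C$ must be established beforehand, uniformly along the intermediate configurations. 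As it stands, the proposal identifies the correct mechanisms (including the $m^2$-versus-$m$ effect), but the two steps where the proof actually lives remain unexecuted.
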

\begin{proof}
Let $X\in\X^\Lambda$ and let $\sss(X):=\{\xi_1,\ldots,\xi_K\}$ with $1\le K\le \Lambda$ and $\xi_k<\xi_{k+1}$ for $k=1,\ldots,K-1$. 
Since the maximal distance between two consecutive particles is larger than (or equal to) $1$, we can assume without loss of generality that $\xi_1\ge \frac 1 2$ and $\xi_K\le \Lambda-\frac 1 2$; 
let 
$$
0<\ep < r< \frac12 \min\Big\{ 1,\min_{k=1,\ldots,K-1}|\xi_{k+1}-\xi_{k}|\Big\}. 
$$
Moreover, we can assume without loss of generality that $X=X^\ord$. 
 
 Then
\begin{equation}\label{enrisc}
\begin{aligned}
\E^{s}_\ep(X)\ge&\,\frac 1 2\sum_{k=1}^{K}\int_{A_{\ep,r}(\xi_k)}\ud x\int_{A_{\ep,r}(\xi_k)}\frac{|u_\ep^X(x)-u_\ep^{X}(y)|^2}{|x-y|^{1+2s}}\ud y\\
&\,+\frac 1 2\int_{(0,\Lambda)\setminus\bigcup_{k=1}^{K} \bar B_r(\xi_k)}\ud x\int_{\R\setminus\bigcup_{k=1}^{K}\bar B_r(\xi_k)}\frac{|u_\ep^X(x)-u_\ep^{X}(y)|^2}{|x-y|^{1+2s}}\ud y\\
&=: I_{\ep,r,\mathrm{sr}}^s+I_{\ep,r,\mathrm{lr}}^s.
\end{aligned}
\end{equation}
Now we estimate $I^s_{\ep,r,\mathrm{sr}}$ and $I^s_{\ep,r,\mathrm{lr}}$.
 
To this end, we preliminarily notice that for $x,y\in \R\setminus\bigcup_{k=1}^K \bar B_\ep(\xi_k)$ it holds
\begin{equation}\label{imp}
u_\ep^X(x)-u_\ep^X(y)=u^X(x)-u^X(y)=x-y+\mu^X((x,y)),
\end{equation}
As a consequence
\begin{equation}\label{meglioI1}
\begin{aligned}
I_{\ep,r,\mathrm{sr}}^s=&\,\sum_{k=1}^{K}\Bigg(\frac 1 2\int_{A_{\ep,r}(\xi_k)}\ud x\int_{A_{\ep,r}(\xi_k)}|x-y|^{1-2s}\ud y\\
&\,-2 \mm(\xi_k)\int_{\xi_k-r}^{\xi_k-\ep}\ud x\int_{\xi_k+\ep}^{\xi_k+r}  (y-x)^{-2s}\ud y\\
&\,
+\mm^2(\xi_k) \int_{\xi_k-r}^{\xi_k-\ep}\ud x\int_{\xi_k+\ep}^{\xi_k+r} (y-x)^{-1-2s}\ud y\Bigg)\\
\ge&\,\sum_{k=1}^{K}\Bigg(-2 \mm(\xi_k)\int_{\xi_k-r}^{\xi_k-\ep}\ud x\int_{\xi_k+\ep}^{\xi_k+r}  (y-x)^{-2s}\ud y\\
&\,+\mm^2(\xi_k) \int_{\xi_k-r}^{\xi_k-\ep}\ud x\int_{\xi_k+\ep}^{\xi_k+r} (y-x)^{-1-2s}\ud y\Bigg).
\end{aligned}
\end{equation}
We first discuss the case $s>\frac 1 2$.

By straightforward computations, we have that
\begin{equation}\label{contiinu}
\lim_{\e\to 0 } \int_{-r}^{-\ep}\ud x\int_{\ep}^{r}{(y-x)^{-2s}}\ud y=\frac{2-2^{2-2s}}{(2s-1)(2-2s)} r^{2-2s}\le C(s),
\end{equation}
and, by Taylor expansion,
\begin{equation}\label{unicocontosuper}
\begin{aligned}
\int_{-r}^{-\ep}\ud x\int_{\ep}^r(y-x)^{-1-2s}\ud y=&\,\frac{1}{2s(2s-1)}\big((2\ep)^{1-2s}+(2r)^{1-2s}-2(\ep+r)^{1-2s}\big)\\
=&\,
\sigma^s(\ep)+\frac{2^{1-2s}-2}{2s(2s-1)}r^{1-2s}+\mathrm{O}_r(\ep),
\end{aligned}
\end{equation}
where $\mathrm{O}_r:\R^+\to \R$ satisfies, for fixed $r>0$,  $\limsup_{\ep\to 0}\frac{\mathrm{O}_r(\ep)}{\ep} =:C_r < + \infty$.
By \eqref{enrisc}, \eqref{meglioI1}, \eqref{contiinu} and \eqref{unicocontosuper},  using also that $I_{\ep,r,\mathrm{lr}}^s$ is non-negative, for $\ep$ small enough, we get
\begin{equation}\label{sifastimadopo}
\begin{aligned}
\E^s_\ep(X)\ge&\, \sum_{k=1}^K\widehat{m}^2(\xi_k)\Big(\sigma^s(\ep)+ \frac{2-2^{1-2s}}{2s(2s-1)}r^{1-2s}+\mathrm{O}_r(\ep)\Big)-2\Lambda C(s)\\
\ge&\, \Lambda\sigma^s(\ep)+\Lambda\frac{1}{2s(2s-1)}r^{1-2s}+\mathrm{O}_r(\ep)-2\Lambda C(s),
\end{aligned}
\end{equation}
where the last inequality follows using that $1-2s<0$ and that
\begin{equation}\label{megliosing}
\sum_{k=1}^{K}\mm^2(\xi_k)\ge \Lambda.
\end{equation}
If $\delta(X)=0$, then the inequality \eqref{megliosing} is strict, whence \eqref{theclaim} immediately follows.

Assume now that $\delta(X)>0$; since $r<\delta(X)$,
taking the $\liminf$ as $\ep\to 0$ in \eqref{sifastimadopo}, we have
\begin{equation*}
\liminf_{\ep\to 0}\Big(\inf_{X\in\X^\Lambda:\delta(X)\le \delta}\E^s_\ep(X)-\Lambda\sigma^s(\ep)\Big)\ge \Lambda\frac{1}{2s(2s-1)}r^{1-2s}-2\Lambda C(s).
\end{equation*}
This concludes the proof of \eqref{theclaim} for $s>\frac 1 2$.
\medskip

Now, we discuss the case $s=\frac 1 2$.

To this end, we preliminarily observe that for every $0<\gamma<\bar\gamma$ it holds
\begin{equation*}
\int_{-\bar\gamma}^{-\gamma}\ud x\int_{\gamma}^{\bar\gamma}(y-x)^{-2}\ud y=2 \log(\gamma+\bar\gamma)-\log(2\gamma)-\log(2\bar\gamma)=\log\frac{\gamma+\bar\gamma}{\gamma}+\log\frac{\gamma+\bar\gamma}{\bar\gamma}-2\log 2,
\end{equation*}
thus yielding
 \begin{equation}\label{unicoconto+}
\lim_{\frac{\gamma}{\bar\gamma}\to 0}\Big(\int_{-\bar\gamma}^{-\gamma}\ud x\int_{\gamma}^{\bar\gamma}(y-x)^{-2}\ud y-\log\frac{\bar\gamma}{\gamma}\Big)=-2\log 2.
\end{equation}
Analogously, one can check that
\begin{equation}\label{unicoconto2}
\int_{-\bar\gamma}^{-\gamma}\ud x\int_{\gamma}^{\bar\gamma}(y-x)^{-1}\ud y=2\gamma(1+\log(2\gamma))-2\bar\gamma(1-\log(2\bar\gamma))-2(\gamma+\bar\gamma)\log(\gamma+\bar\gamma).
\end{equation}
Therefore, by combining \eqref{meglioI1} together with \eqref{unicoconto+}, \eqref{unicoconto2} (applied with $\gamma=\ep$ and $\bar\gamma=r$) and with \eqref{megliosing}, for 
$r$ fixed
 and $\ep$ small enough we deduce that
\begin{equation}\label{estI1}
I_{\ep,r,\mathrm{sr}}^{\frac 1 2}\ge\sum_{k=1}^{K}\mm^2(\xi_k)\log\frac{r}{\ep}-C_r(\ep)\ge  \Lambda\log\frac{r}{\ep}-C_r(\ep),
\end{equation}
where $C_r(\ep)$ satisfies $\limsup_{\ep\to 0}C_r(\ep)=:C_r<+\infty$.
We recall that the inequality in \eqref{megliosing} is an equality if and only if $\mm(\xi_k)=1$ for
every $k=1,\ldots,K$, namely, if $X\in\rs^\Lambda$.
Therefore, since $I_{\ep,r,\mathrm{lr}}^{\frac 1 2}$ is non-negative, we have that the claim is trivially satisfied if $X\in\X^\Lambda\setminus\rs^\Lambda$.

We focus now on the case $X\in\rs^\Lambda$.
Notice that, in view of \eqref{imp}, the term $I_{\ep,r,\mathrm{lr}}^{\frac 12}$ does not depend on $\ep$, so that we can write $I_{\ep,r,\mathrm{lr}}^{\frac 1 2}=:I_{r,\mathrm{lr}}^{\frac 1 2}$, and
by \eqref{enrisc} and \eqref{estI1}, we get that for $\ep$ small enough,
\begin{equation}\label{quasiclaim}
\E_\ep^{\frac 1 2}(X)-\Lambda|\log\ep|\ge I_{r,\mathrm{lr}}^{\frac 1 2}+\Lambda\log r-C_r(\ep)\qquad\textrm{ for every }X\in\rs^\Lambda\textrm{ with }\delta(X)\ge 2r.
\end{equation}
We can assume without loss of generality that $\delta(X)\le 1$.
In order to prove the claim it is enough to study the behavior of $I_{r,\mathrm{lr}}^{\frac 1 2}$ as $r\to 0$.
Let $\gamma_r:(0,1)\to \N$ be the function that at any $p\in (0,1)$ associates the number $\gamma_r(p)$ of connected components of the set $\bigcup_{\lambda=1}^{\Lambda} B_{r^p}(x_\lambda)\cup B_{r^p}(x_1+\Lambda)$.
Trivially, the function $\gamma_r$ is piecewise constant; we denote by $0<p_1<\ldots<p_L<1$ the jump points of $\gamma_r$ and we set $p_0:=0$ and $p_{L+1}:=1$. Let $0<\eta<\frac 1 2\min_{l=1,\ldots,L-1}(p_{l+1}-p_l)$. Then there exists $J_{r}^l$ ``annuli'' $A_{r^{p_l-\eta},r^{p_{l-1}+\eta}}(\zeta_j)$, with $j=1,\ldots,J_{r}^l$, that are pairwise disjoint for every $j$ and $l$ and satisfy
$$
\bigcup_{\lambda=1}^{\Lambda} B_r(x_\lambda)\subset\bigcup_{j}B_{r^{p_l-\eta}}(\zeta_j).
$$
We highlight that, $\mu^X((x,y))=\mu^X(B_{r^{p_k-\eta}}(\zeta_j))=:[\mu_j]$ whenever $\zeta_j-r^{p_{l-1}+\eta}<x<\zeta_j-r^{p_l-\eta}$ and $\zeta_j+r^{p_{l}-\eta}<y<\zeta_j+r^{p_{l-1}+\eta}$.
Moreover, by construction,
 for every $l$ there exists (at least one) $j=1,\ldots,J^l_r$ such that $|[\mu_j]|\ge 2$  and hence
$|[\mu_j]|^2\ge |[\mu_j]|+3$.
Therefore, by \eqref{imp}, using \eqref{unicoconto+} and \eqref{unicoconto2},
arguing as in \eqref{sifastimadopo}, for $r$ small enough
we have that
\begin{equation*}
\begin{aligned}
I_{r,\mathrm{lr}}^{\frac 1 2}\ge&\, \frac 1 2\sum_{l=1}^{L+1}\sum_{j=1}^{J_r^l}\int_{A_{r^{p_l-\eta},r^{p_{l-1}+\eta}}(\zeta_j)}\ud x\int_{A_{r^{p_l-\eta},r^{p_{l-1}+\eta}}(\zeta_j)}\frac{|\mu^X(x,y)|^2}{|x-y|^2}\ud y-C\\
\ge&\, \sum_{l=1}^{L+1}\sum_{j=1}^{J_r^l}|[\mu_j]|\int_{\zeta_j-r^{p_{k-1}+\eta}}^{\zeta_j-{r^{p_k-\eta}}}\ud x\int_{\zeta_j+r^{p_k-\eta}}^{\zeta_j+r^{p_{k-1}+\eta}}\frac{1}{|x-y|^2}\ud y\\
&\,+3 \sum_{k=1}^{K+1}\int_{\zeta_j-r^{p_{l-1}+\eta}}^{\zeta_j-{r^{p_l-\eta}}}\ud x\int_{\zeta_j+r^{p_l-\eta}}^{\zeta_j+r^{p_{l-1}+\eta}}\frac{1}{|x-y|^2}\ud y-C\\
\ge&\,
(\Lambda+3)\sum_{l=1}^{K+1}(p_{l}-p_{l-1}-2\eta)|\log r|-C
=(\Lambda+3)(1-2(K+1)\eta)|\log r|-C,
\end{aligned}
\end{equation*}
where in the last inequality we have used
 that $\sum_{j=1}^{J_r^k}[\mu_j]=\mu^X([0,\Lambda))=\Lambda$.
 
 By the arbitrariness of $\eta$ we have that $I^{\frac 1 2}_{r,\mathrm{lr}}\ge (\Lambda+3)|\log r|-C$, which, in view of \eqref{quasiclaim}, for $r$ small enough, implies
 $$
 \E_\ep^{\frac 1 2}(X)-\Lambda|\log\ep|\ge 3|\log r|-C,
 $$
whence \eqref{theclaim} follows also in this case.
This concludes the proof of the whole result.
\end{proof}
\begin{proof}[Proof of Theorem \ref{mincrit}]
By straightforward computations, for every configuration $\overline X\in\cs^{\Lambda}$ we have
\begin{equation}\label{base}
\E_\ep^{s}(\overline X)\le \Lambda\sigma^s(\ep)+\bar C(s).
\end{equation}
Let $\omega^s(\cdot):=C(s)\sigma^s(\cdot)$ be the quantity in the right-hand side of \eqref{theclaim}  and let
 $\bar\delta=\bar\delta(s)>0$ be such that $\frac {\omega^s(\delta)}{2}>\bar C(s)$ for every $0<\delta<\bar \delta$.
By Proposition \ref{quasigamma}, there exists $\bar\ep=\bar\ep(\bar\delta)>0$ such that for every 
$0<\ep<\bar \ep$
it holds
\begin{equation}\label{daprop}
\inf_{X\in\X^\Lambda:\delta(X)<\bar\delta}\E_\ep^s(X)\ge \Lambda\sigma^s(\ep)+\frac{\omega^s(\delta)}{2}.
\end{equation}
Let 
\begin{equation}\label{eppiccolo}
0<\ep<\min\Big\{\bar \ep,\frac{\bar\delta}{4}\Big\}.
\end{equation}
By \eqref{base} and \eqref{daprop}, we have that every minimizer $X$ of $\E^s_\ep$ in $\X^\Lambda$ satisfies
\begin{equation}\label{dalba}
\delta(X)>\bar\delta,
\end{equation}
and hence, in particular, 
\begin{equation}\label{limitedist}
\delta(X)> 4\ep.
\end{equation}
Let $\rs^\Lambda_{\bar\delta}$ denote the class of configurations satisfying \eqref{dalba}; notice that such a set is convex if the configurations are described in terms of the distances between nearest neighboring particles.
By \eqref{limitedist}, we can use \eqref{stimasuperpeggio} to deduce that $\nabla^2\E_\ep^s$ is strictly positive modulo rigid translations on $\rs^\Lambda_{\bar\delta}$, whence we have immediately that there is at most one - up to rigid translations - critical point of the energy $\E_\ep^s$ in $\rs^\Lambda_{\bar\delta}$.
This fact, together with \eqref{primasuper} and \eqref{laplafranullo}, yields that  the set of critical points of $\E_\ep^s$ in $\rs^\Lambda_{\bar\delta}$ coincides with 
$\cs^\Lambda$.
Now, since $\E_\ep^s$ is constant on $\cs^\Lambda$,  we get that $\cs^\Lambda$ is the set of  global minimizers of $\E_\ep^s$.

\end{proof}
\section{Dynamics}
In this section we study the gradient flows of the energy functionals $\E^s$ and $\E_\ep^s$.

\subsection{The subcritical case $0<s<\frac 1 2$}\label{subsec:dynsub}
Here we analyze the gradient flow system of $\E^s$ starting from a regular datum $X^0\in\rs^\Lambda$. With a little abuse of notation we say that the map $t\mapsto X(t)\in \rs^\Lambda$ is absolutely continuous on some interval $I$, and  we write $X\in AC(I)$, if the maps $t\mapsto \mathsf x_z(t)$ are absolutely continuous in $I$ (for all $z\in \Z$).
Notice that, if $X\in AC(I)$, the quantities $\dot X(t)$ and $\nabla \E^s(X(t))$ are well defined (in the obvious way)  for almost all $t\in I$.  
In this sense we understand the Cauchy problem we aim at studying
\begin{equation}\label{cauchy}
\left\{
\begin{array}{l}
\dot{X}(t)=-\nabla\E^s(X(t))\\
X(0)=X^0.
\end{array}
\right.
\end{equation}
Notice that
if we denote by $\nabla \E^s(\XX)$ the sequence obtained by setting $\partial_{x_{\lambda+\Lambda z}}\E^s(\XX):=\partial_{x_\lambda}\E^s(X)$ for every $\lambda=1,\ldots,\Lambda$ and for every $z\in\Z$, then the Cauchy problem above is equivalent to 
\begin{equation}\label{cauchyequiv}
\left\{
\begin{array}{l}
\dot{\XX}(t)=-\nabla\E^s(\XX(t))\\
\XX(0)=\XX^0.
\end{array}
\right.
\end{equation}
In view of Proposition \ref{prop:firstvarsub} we have that there exists $T>0$ such that the problem \eqref{cauchy} admits a unique $C^1$ (actually, smooth) solution in $[0,T)$.


We start by proving that the problem \eqref{cauchy} has (unique and $C^1$) solution in $[0,+\infty)$ and that the configuration $X(t)$ is regular for every $t\ge 0$.
\begin{proposition}\label{nocollis<}
Let $0<s<\frac 1 2$.
Let $X^0\in\rs^\Lambda$ and let $T_{\max}$ be the maximal existence time for the Cauchy problem \eqref{cauchy}.
Then $T_{\max}=+\infty$.
\end{proposition}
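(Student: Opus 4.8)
The plan is to argue by contradiction, assuming $T_{\max}<\infty$, and to rule out the only possible obstruction to global existence, namely a collision. Since \eqref{cauchy} is driven by the smooth vector field $-\nabla\E^s$ on the open set $\rs^\Lambda$ and the particles live on the circle $\R/\Lambda\Z$, the standard continuation principle for ODEs guarantees that a finite maximal time can occur only if the trajectory leaves every compact subset of $\rs^\Lambda$, i.e.\ only if $\liminf_{t\to T_{\max}}\delta(X(t))=0$, with $\delta$ as in \eqref{dmax}. The goal is to exclude this scenario.

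First I would upgrade the abstract ``approach to the boundary'' into genuine convergence of the trajectory. Along the flow one has the dissipation identity $\frac{d}{dt}\E^s(X(t))=-|\nabla\E^s(X(t))|^2\le 0$, so that $\E^s(X(t))\le\E^s(X^0)$; since $\E^s\ge 0$ and $\E^s(X^0)<\infty$ (finiteness of the subcritical energy on all configurations), integrating gives $\int_0^{T_{\max}}|\dot X(t)|^2\,dt\le\E^s(X^0)<\infty$. If $T_{\max}<\infty$, Cauchy--Schwarz then yields $\int_0^{T_{\max}}|\dot X(t)|\,dt<\infty$, so the trajectory has finite length and $X(t)\to X^*$ as $t\to T_{\max}$ for some $X^*\in\X^\Lambda$. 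Were $X^*\in\rs^\Lambda$, the local existence result would let us extend the solution beyond $T_{\max}$, contradicting maximality; hence $X^*\notin\rs^\Lambda$, and there is a point $\xi^*\in\sss(X^*)$ of multiplicity $K\ge 2$.

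The crux is then to invoke the steepest-descent estimate of Proposition \ref{spostouna} to show that such a cluster cannot actually form. For $t$ close to $T_{\max}$ the $K$ particles collapsing onto $\xi^*$ form a cluster $\qs(t)=\{\xi_1(t),\dots,\xi_K(t)\}$ that is isolated from the remaining particles by a distance bounded below by some $\sigma_0>0$ (the other points of $X^*$ lie at positive distance from $\xi^*$), and whose internal order is fixed, being preserved by the collision-free flow on $[0,T_{\max})$. Writing $\delta(t):=\xi_K(t)-\xi_1(t)$ for the extent of the cluster, we have $\delta(t)\to 0$. Applying Proposition \ref{spostouna} to the rightmost particle, its analogue in the subsequent Remark to the leftmost one, and recalling from \eqref{prima} that $\dot x_{\lambda(\xi)}=-\partial_{x_{\lambda(\xi)}}\E^s$, we obtain
\begin{equation*}
\dot\delta(t)=-\partial_{x_{\lambda(\xi_K)}}\E^s(X(t))+\partial_{x_{\lambda(\xi_1)}}\E^s(X(t))\ge\frac{K-1}{s}\,\delta(t)^{-2s}-2C\ge\frac1s\,\delta(t)^{-2s}-2C,
\end{equation*}
with $C=C(\Lambda,s,\sigma_0)$ uniform for $t$ near $T_{\max}$.

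Finally I would close the argument. Since $\delta(t)\to 0$, there is $t_0<T_{\max}$ with $\delta(t_0)$ so small that $\frac1s\delta^{-2s}-2C>0$ for all $0<\delta\le\delta(t_0)$; the differential inequality then forces $\delta$ to be strictly increasing on $[t_0,T_{\max})$ as long as it stays below $\delta(t_0)$, whence $\delta(t)\ge\delta(t_0)>0$ there, contradicting $\delta(t)\to 0$. Therefore $T_{\max}=+\infty$. The main obstacle is the middle step: converting the mere divergence from compact subsets into convergence to a definite limit $X^*$ and identifying a fixed, isolated cluster, so that Proposition \ref{spostouna} can be applied with a fixed multiplicity $K$ and a uniform separation $\sigma_0$ (hence a uniform constant $C$); once this localization is in place, the repulsion estimate delivers the contradiction almost immediately.
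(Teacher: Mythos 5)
Your proof is correct, and after the common preliminary reduction it takes a genuinely different route from the paper's. Both arguments start identically: a finite $T_{\max}$ forces $\delta(X(t))\to 0$, and the dissipation identity together with Cauchy--Schwarz gives finite trajectory length, hence convergence to a limit configuration $X^*\notin\rs^\Lambda$ (the paper needs exactly this to make sense of $\E^s(X(T))$ and $\dist(X(t),X(T))$). From there the paper argues variationally: it uses Proposition \ref{spostouna} only to deduce the gradient blow-up \eqref{contr}, converts this into the statement that the energy difference quotients $\big(\E^s(X(T))-\E^s(X(t))\big)/\dist(X(t),X(T))$ tend to $-\infty$ along the flow, and then contradicts this lower-unboundedness by comparing $X(t^n)$ with the collapsed configurations $\widehat X^n$ of \eqref{hatdefi}, invoking Lemma \ref{missinglemma} (finite slope for rigid motions of whole clusters) and Lemma \ref{collision} ($\E^s(\widehat X^n)\ge \E^s(X^n)$ eventually) to bound the quotient below by $-|C|\sqrt{\Lambda}$. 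You instead apply Proposition \ref{spostouna} and its companion Remark to \emph{both} extreme particles of the isolated collapsing cluster, obtaining the two-sided repulsion estimate $\dot\delta \ge \frac{K-1}{s}\delta^{-2s}-2C$, and close with a barrier argument on the cluster width. This bypasses Lemmas \ref{missinglemma} and \ref{collision} entirely and is more elementary and quantitative (it produces an explicit threshold width, depending only on $K$, $s$ and $C(\Lambda,s,\sigma_0)$, below which a cluster cannot shrink), but it leans on the dynamical bookkeeping you correctly flag and justify: since the flow stays in $\rs^\Lambda$ on $[0,T_{\max})$, particles never cross, so cluster membership and the indices $\lambda(\xi_1)$, $\lambda(\xi_K)$ are eventually constant and $\delta(t)$ is $C^1$; and the convergence $X(t)\to X^*$ yields the uniform separation $\sigma_0$, hence a constant $C(\Lambda,s,\sigma_0)$ independent of $t$, needed to apply the proposition uniformly. (One cosmetic remark: for $\dot x_{\lambda(\xi)}=-\partial_{x_{\lambda(\xi)}}\E^s$ you only need the flow equation \eqref{cauchy}, not the representation \eqref{prima}.) The paper's energy-comparison route is less explicit but insensitive to labeling and to the internal structure of possibly several simultaneous clusters, being closer in spirit to metric-slope arguments; your route reuses the paper's own key repulsion estimate more directly and yields a cleaner, purely ODE-theoretic contradiction.
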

In order to prove Proposition \ref{nocollis<}, we need an auxiliary result and some further notation.
For every $X,\, Y\in \X^\Lambda$ we set
\begin{equation}\label{distanza}
\dist(X,Y):= \Big(\sum_{\lambda=1}^\Lambda \nper{x_\lambda^\ord - y_\lambda^\ord}^2 \Big)^{\frac 12}.
\end{equation}
Let $\{X^n\}_{n\in\N}\subset\rs^\Lambda$ and $X\in\X^\Lambda$, be such that
\begin{equation}\label{converge}
\dist(X^n,X)\to 0\qquad\textrm{(as $n\to +\infty$)}.
\end{equation}
Assume that  $X^n\equiv (X^n)^\ord$, $X\equiv X^\ord$ and $\sss(X)\subset (0,\Lambda)$.
For every $\xi\in\sss(X)$, we recall that $x_{\phi(\xi)}$ is the first entry of $X$ which is equal to $\xi$, so that  $x^n_{\phi(\xi)},\ldots, x^n_{\phi(\xi)+\mm(\xi)-1}$ converge to $\xi$ as $n\to+\infty$;
for every $n\in\N$ we define
$$
\xi^n:=\mathrm{argmax}\{|t-\xi|\,:\,t\in\{x^n_{\phi(\xi)},x^n_{\phi(\xi)+\mm(\xi)-1}\}\}.
$$
For every $n\in\N$ we define the configuration 
 $\widehat X^n\in\X^\Lambda$ such that 
 \begin{equation}\label{hatdefi}
 \hat x^n_\lambda:=\xi^n\qquad\textrm{ if }\phi(\xi)\le \lambda\le \phi(\xi)+\mm(\xi)-1\textrm{ for some }\xi\in\sss(X). 
 \end{equation}
 We have, $\hat x^n_{\phi(\xi)}=\xi^n=x^n_{\phi(\xi)}$ whenever $\mm(\xi)=1$.
 It is easy to check that
 \begin{equation}\label{crucial}
\frac{1}{\sqrt{\Lambda}}\dist(\widehat X^n,X)\le \dist(X^n,X)\le\dist(\widehat X^n,X).
 \end{equation}
Furthermore, we define
\begin{equation}\label{distanzastramba}
\Delta_{X}(X^n):=\sum_{\lambda: x_\lambda=x_{\lambda+1}}(x_{\lambda+1}^n-x_{\lambda}^n)=\sum_{\xi\in\sss(X)}(x^n_{\phi(\xi)+\mm(\xi)-1}-x^n_{\phi(\xi)}).
\end{equation}
Finally, for general $X^n$ and $X$ (not necessarily coinciding with $(X^n)^\ord$ and $X^\ord$, respectively), we set $\Delta_{X}(X^n):=\Delta_{X^{\ord}}((X^n)^\ord)$.
\begin{lemma}\label{collision}
Let $\{X^n\}_{n\in\N}\subset\rs^\Lambda$ and $X\in\X^\Lambda\setminus\rs^\Lambda$, be such that   
\begin{equation}\label{convergeinlemma}
\dist(X^n,X)\to 0\qquad\textrm{(as $n\to +\infty$)}.
\end{equation}
Then,
\begin{equation}\label{infinito}
\lim_{n\to +\infty}\frac{\E^s(\widehat X^n)-\E^s(X^n)}{\Delta_X(X^n)}=+\infty.
\end{equation}
\end{lemma}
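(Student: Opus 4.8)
The plan is to prove \eqref{infinito} by showing that the numerator is governed by the short‑range self‑interaction inside the collapsing clusters, which diverges like (cluster width)$^{1-2s}$, whereas the denominator is exactly the sum of the cluster widths; an elementary power inequality will then force the quotient to blow up. First I would reduce: by the convention in \eqref{distanzastramba} we may assume $X^n=(X^n)^\ord$, $X=X^\ord$ and $\sss(X)\subset(0,\Lambda)$. Set $J:=\{\xi\in\sss(X):\mm(\xi)\ge2\}$, which is nonempty since $X\notin\rs^\Lambda$, and for $\xi\in\sss(X)$ put $w^n_\xi:=x^n_{\phi(\xi)+\mm(\xi)-1}-x^n_{\phi(\xi)}$. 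Then $w^n_\xi=0$ whenever $\mm(\xi)=1$, so $\Delta_X(X^n)=\sum_{\xi\in J}w^n_\xi>0$, while $w^n_\xi\to0$ by \eqref{convergeinlemma}; hence $\Delta_X(X^n)\to0$ and $W_n:=\max_{\xi\in J}w^n_\xi\to0$. Fix $r>0$ with $2r<\min_{\xi\neq\xi'\in\sss(X)}\nper{\xi-\xi'}=:\rho$, so that for $n$ large the particles of each cluster $\xi$ lie in $B_r(\xi^n)$ and the balls $B_r(\xi^n)$ are pairwise $\rho$‑separated.

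\textbf{Localization and far field.} I would decompose $\E^s(\widehat X^n)-\E^s(X^n)$ into the self‑terms $\frac12\iint_{B_r(\xi^n)\times B_r(\xi^n)}$, one for each $\xi$, plus the remaining cross and long‑range terms. Since $\widehat X^n$ is obtained from $X^n$ by displacing the particles of each cluster by at most $w^n_\xi$, and since on the complement of $\bigcup_\xi B_r(\xi^n)\times B_r(\xi^n)$ every relevant pair $(x,y)$ stays at mutual distance $\ge\rho-2r>0$ (so the kernel and its derivatives in the particle positions are bounded), these cross and long‑range terms are Lipschitz in the positions and contribute at most $C(\Lambda,s,\rho)\sum_\xi w^n_\xi=C\,\Delta_X(X^n)$. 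Only clusters $\xi\in J$ affect the self‑terms nontrivially, as clusters with $\mm(\xi)=1$ do not move.

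\textbf{Self‑term lower bound (main step).} Fix $\xi\in J$. Writing $u(x)-u(y)=(x-y)-N(x,y)$, with $N(x,y)$ the multiplicity‑counted number of particles of the relevant configuration in $(y,x)$ for $x>y$, and expanding the square, the self‑energy on $B_r(\xi^n)$ splits into a slope part $\frac12\iint|x-y|^{1-2s}$, which is independent of the configuration and therefore cancels in the difference; a mixed part comparable to $\iint\operatorname{sgn}(x-y)\,|x-y|^{-2s}N$, whose difference is $O(w^n_\xi)$ because $|x-y|^{-2s}$ is integrable for $s<\tfrac12$; and the genuinely singular jump–jump part $\frac12\iint N^2|x-y|^{-1-2s}$. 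Since $N$ is a sum of indicators of the individual particles, $N^2$ decomposes over pairs of particles: each pair contributes a nonnegative amount to the collapse, and by the computation of Lemma \ref{missinglemma2} the outermost pair, at distance $w^n_\xi$, alone produces $\frac{1}{s(1-2s)}(w^n_\xi)^{1-2s}+O(w^n_\xi)$. Thus the self‑term difference for the cluster $\xi$ is at least $c_s(w^n_\xi)^{1-2s}-C\,w^n_\xi$ with $c_s:=\frac{1}{s(1-2s)}>0$; the same bound can also be obtained by integrating the slope estimate of Proposition \ref{spostouna} along the homothety collapsing the cluster onto $\xi^n$.

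\textbf{Conclusion and main difficulty.} Summing over $\xi\in J$ and combining with the far‑field estimate yields $\E^s(\widehat X^n)-\E^s(X^n)\ge c_s\sum_{\xi\in J}(w^n_\xi)^{1-2s}-C\,\Delta_X(X^n)$. Using $(w^n_\xi)^{1-2s}=(w^n_\xi)^{-2s}\,w^n_\xi\ge W_n^{-2s}\,w^n_\xi$ gives $\sum_{\xi\in J}(w^n_\xi)^{1-2s}\ge W_n^{-2s}\,\Delta_X(X^n)$, whence
\[
\frac{\E^s(\widehat X^n)-\E^s(X^n)}{\Delta_X(X^n)}\ge c_s\,W_n^{-2s}-C\xrightarrow[n\to\infty]{}+\infty ,
\]
since $W_n\to0$. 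The main obstacle is the self‑term lower bound: one must isolate the singular jump–jump contribution and bound it below by $c_s\,w^{1-2s}$ uniformly in the internal arrangement of each cluster (the outermost pair already suffices, the remaining pairs only helping), while simultaneously controlling the slope and mixed remainders together with the cross and long‑range terms at order $O(\Delta_X(X^n))$.
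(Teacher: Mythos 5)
Your argument is correct, but it takes a genuinely different route from the paper's. The paper proves \eqref{infinito} by building a finite chain of intermediate configurations $X^n[k]$ that collapse each cluster one stack at a time onto its outermost particle, and lower-bounds each step by $\frac{1}{2s(1-2s)}(\mathrm{gap})^{1-2s}-C(s,\Lambda)\,\mathrm{gap}$ (estimates \eqref{claim}--\eqref{secoclaim}), re-running the first-variation computations of \eqref{fvar1} and the interaction estimates \eqref{incro}--\eqref{calcoloint}; it then concludes by telescoping and selecting a single consecutive gap of size at least $\Delta_X(X^n)/\Lambda$, for which $(\mathrm{gap})^{1-2s}/\Delta_X(X^n)\geq \Lambda^{-1}(\mathrm{gap})^{-2s}\to+\infty$. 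You instead expand the quadratic energy difference in one shot: writing $u(x)-u(y)=(x-y)-N(x,y)$, the slope part cancels, the mixed part is $O(w^n_\xi)$ (legitimate precisely because $|x-y|^{-2s}$ is locally integrable for $s<\tfrac12$), and the jump--jump part decomposes over pairs of particles, where the monotonicity of the pair interaction in the pair distance (convexity of $t\mapsto t^{-2s}$) shows every pair contributes favorably to the collapse and the outermost pair alone yields $c_s(w^n_\xi)^{1-2s}$, with $c_s=\frac{1}{s(1-2s)}$ consistent with Lemma \ref{missinglemma2} for $\mm(\xi)=2$; finally the inequality $\sum_\xi (w^n_\xi)^{1-2s}\geq W_n^{-2s}\Delta_X(X^n)$ replaces the paper's largest-gap selection. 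Your route avoids intermediate configurations entirely and gives a quantitative rate $c_s W_n^{-2s}-C$ for the quotient; the paper's route avoids expanding $N^2$ and recycles its earlier integral identities verbatim, which keeps all error constants explicit.

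Two small points you should make explicit in a write-up, neither of which is a genuine gap. First, in the far-field estimate it is not true that every relevant pair $(x,y)$ is at distance at least $\rho-2r$: for $x\in B_r(\xi^n)$ and $y$ just outside the window the kernel is large; what saves the estimate is that the integrand \emph{difference} vanishes unless one of $x,y$ lies in a swept interval, and those intervals sit within $O(w^n_\xi)$ of the cluster center $\xi^n$, hence at distance at least $r/2$ from the window boundary, after which the $O(\Delta_X(X^n))$ bound follows as you say. Second, the diagonal terms of $N^2$ (each particle paired with itself) are not sign-definite under the collapse; they must be absorbed into the error, which works because the map $p\mapsto \is^s\big((\xi^n-r,p),(p,\xi^n+r)\big)$ is Lipschitz for $p$ in the middle half of the window, so each diagonal term changes by $O(w^n_\xi)$.
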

\begin{proof}
We can assume without loss of generality that $X=X^\ord$ and that $X^n= (X^{n})^\ord$ for every $n\in\N$. Moreover, in virtue of the translational invariance, we can assume that $x_1>0$ and that $x^n_1>0$ for every $n\in\N$.  

Let $\mathcal{M}(X):=\{\xi\in\sss(X)\,:\,\mm(\xi)\ge 2\}=\{\xi_1,\ldots,\xi_J\}$ with $\xi_j<\xi_{j+1}$. 
We will assume, just to fix the notation, that $\xi^n_{j}\equiv x^{n}_{\phi(\xi_j)+\widehat m(\xi_j)-1}$ for every $j=1,\ldots,J$ and for every $n\in\N$.

Let $K:=\sum_{j=1}^J(\mm(\xi_j)-1)$.
For every $k=0,1,\ldots,K-1$  
there are uniquely determined integers $1\le \bar\jmath\le J$ and $0\le l\le \mm(\xi_{\bar\jmath})-2$ such that
\begin{equation}\label{kform}
k=\sum_{j=1}^{\bar\jmath-1}(\mm(\xi_{j})-1)+l,
\end{equation} 
where, for $\bar\jmath=1$ the sum above is understood to be equal to zero. 
We define $X^n[k]=(x^n_1[k],\ldots,x^n_{\Lambda}[k])$, where
\begin{equation*}
x^n_{\lambda}[k]:=\left\{\begin{array}{ll}
x^n_{\phi(\xi_j)+\mm(\xi_j)-1}&\textrm{if }\phi(\xi_{ j})\le \lambda\le\phi(\xi_{j})+\mm(\xi_j)-1, \,\textrm{for }1\le j\le\bar\jmath-1,\\
x^n_{\phi(\xi_{\bar\jmath})+l}&\textrm{if }\phi(\xi_{\bar\jmath})\le \lambda\le\phi(\xi_{\bar\jmath})+l-1,\\
x^n_{\lambda}&\textrm{elsewhere}.
\end{array}
\right.
\end{equation*}
By definition, $X^n[0]\equiv X^n$.
Furthermore, we define $X^n[K]$ as
\begin{equation*}
x^n_{\lambda}[K]:=\left\{\begin{array}{ll}
x^n_{\phi(\xi_j)+\mm(\xi_j)-1}&\textrm{if }\phi(\xi_{ j})\le \lambda\le\phi(\xi_{j})+\mm(\xi_j)-1, \,\textrm{for }1\le j\le J,\\
x^n_{\lambda}&\textrm{elsewhere},
\end{array}
\right.
\end{equation*}
so that $X^n[K]\equiv \widehat X^n$.

We claim that, for $k$ as in \eqref{kform} with $l\ge 1$,
\begin{multline}\label{claim}
\E^s(X^n[k])-\E^s(X^n[k-1])\\
\ge\frac{1}{2s(1-2s)} (x^n_{\phi(\xi_{\bar\jmath})+l}-x^n_{\phi(\xi_{\bar\jmath})+l-1})^{1-2s}
-C(s,\Lambda)(x^n_{\phi(\xi_{\bar\jmath})+l}-x^n_{\phi(\xi_{\bar\jmath})+l-1}),
\end{multline}
and that, if $k=\sum_{j=1}^{\bar\jmath-1}(\mm(\xi_{j})-1)$ for some $\bar\jmath=1,\ldots, J+1$ (i.e., in the case $l=0$ in \eqref{kform} and $k=K$),
then
\begin{multline}\label{secoclaim}
\E^s(X^n[k])-\E^s(X^n[k-1])\ge\\
 \frac{1}{2s(1-2s)} (x^n_{\phi(\xi_{\bar\jmath})+\mm(\xi_{\bar\jmath})-1}-x^n_{\phi(\xi_{\bar\jmath})+\mm(\xi_{\bar\jmath})-2})^{1-2s}
-C(s,\Lambda)(x^n_{\phi(\xi_{\bar\jmath})+\mm(\xi_{\bar\jmath})-1}-x^n_{\phi(\xi_{\bar\jmath})+\mm(\xi_{\bar\jmath})-2}).
\end{multline}
We will prove only \eqref{claim}, being the proof of \eqref{secoclaim} the same up to notational changes.

For every $n\in\N$ 
we set
$$
I^n_l:=(x^n_{\phi(\xi_{\bar\jmath})+l-1},x^n_{\phi(\xi_{\bar\jmath})+l}),\qquad\qquad A^n_l:=I^n_l+\Lambda\Z.
$$
For every $k$, we set $u^n[k]:=u^{X^n[k]}$ and we notice that
$$
u^n[k]=u^n[k-1]+l
\chi_{A^n_l}.
$$
By arguing as in \eqref{fvar1}, one can  check that
\begin{equation}\label{differene}
\begin{aligned}
&\E^s(X^n[k])-\E^s(X^n[k-1])\\
=&\,2l\int_{I^n_l}\ud x\int_{\R}\frac{(u^{n}[k-1](x)+\frac {l}{2})-(u^n[k-1](y)+\frac {l}{2}\chi_{A^n_l}(y))}{|x-y|^{1+2s}}\ud y.
\end{aligned}
\end{equation}
By arguing as in \eqref{vienezero}, we get
\begin{equation}\label{vienezero2}
\int_{I^n_l}\ud x \int_{I^n_l}\frac{u^{n}[k-1](x)+\frac {l}{2}-\big(u^n[k-1](y)+\frac {l} 2\chi_{A^n_l}(y)\big)}{|x-y|^{1+2s}}\ud y=0.
\end{equation}
Let now $\rho>0$ be such that the intervals $B_{\rho}(\xi)$, with $\xi\in\sss(X)$ are pairwise disjoint and contained in $(0,\Lambda)$.

By construction, for every $x,y\in\R$
\begin{equation}\label{inftyest}
\Big|u^{n}[k-1](x)+\frac {l}{2}-\big(u^n[k-1](y)+\frac {l} 2\chi_{A^n_l}(y)\big)\Big|\le 2\Lambda;
\end{equation}
moreover, for $n$ large enough, we have that 
\begin{equation}\label{denoest}
|x-y|\ge \frac{1}{2}|\xi_{\bar\jmath}-y|\qquad\textrm{whenever }x\in I^n_l\,,\,y\in\R\setminus B_{\rho}(\xi_{\bar\jmath}).
\end{equation}
By \eqref{inftyest} and \eqref{denoest}, we deduce
\begin{equation}\label{incro0}
\begin{aligned}
&\,\bigg| \int_{I^n_l}\ud x \int_{\R\setminus B_{\rho}(\xi_{\bar\jmath})}\frac{u^{n}[k-1](x)+\frac {l}{2}-\big(u^n[k-1](y)+\frac {l} 2\chi_{A^n_l}(y)\big)}{|x-y|^{1+2s}}\ud y\bigg|\\
\le&\, C(s)\Lambda\rho^{-2s}(x^n_{\phi(\xi_{\bar\jmath})+l}-x^{n}_{\phi(\xi_{\bar\jmath})+l-1}).
\end{aligned}
\end{equation}
Now we compute
\begin{equation}\label{incro}
\begin{aligned}
&\int_{I^n_l}\ud x\int_{B_\rho(\xi_{\bar\jmath})\setminus \bar I^n_l}\frac{u^{n}[k-1](x)+\frac {l}{2}-u^n[k-1](y)}{|x-y|^{1+2s}}\ud y\\
=&\,\int_{I^n_l}\ud x\int_{\xi_{\bar\jmath}-\rho}^{x^n_{\phi(\xi_{\bar\jmath})+l-1}}\frac{u^{n}[k-1](x)+\frac {l}{2}-u^n[k-1](y)}{|x-y|^{1+2s}}\ud y\\
&\,+\int_{I^n_l}\ud x\int^{\xi_{\bar\jmath}+\rho}_{x^n_{\phi(\xi_{\bar\jmath})+l}}\frac{u^{n}[k-1](x)+\frac {l}{2}-u^n[k-1](y)}{|x-y|^{1+2s}}\ud y.
\end{aligned}
\end{equation}
Since 
$$
u^{n}[k-1](x)-u^n[k-1](y)=-l+x-y\qquad\textrm{whenever }x\in I^n_l\,,\, y\in(\xi_{\bar\jmath}-\rho,  x^n_{\phi(\xi_{\bar\jmath})+l-1}),
$$
we get
\begin{equation}\label{incro1}
\begin{aligned}
&\int_{I^n_l}\ud x\int_{\xi_{\bar\jmath}-\rho}^{x^n_{\phi(\xi_{\bar\jmath})+l-1}}\frac{u^{n}[k-1](x)+\frac {l}{2}-u^n[k-1](y)}{|x-y|^{1+2s}}\ud y\\
=&\,-\frac l 2\is^s\big(I^n_l,(\xi_{\bar\jmath}-\rho,x^n_{\phi(\xi_{\bar\jmath})+l-1})\big)+
\int_{I^n_l}\ud x\int_{\xi_{\bar\jmath}-\rho}^{x^n_{\phi(\xi_{\bar\jmath})+l-1}}\frac{x-y}{(x-y)^{1+2s}}\ud y\\
\ge &\,-\frac l 2\is^s\big(I^n_l,(\xi_{\bar\jmath}-\rho,x^n_{\phi(\xi_{\bar\jmath})+l-1})\big).
\end{aligned}
\end{equation}
Moreover, we observe that 
$$
u^{n}[k-1](x)-u^n[k-1](y)\ge 1+x-y\qquad\textrm{whenever }x\in I^n_l,\,\,x^{n}_{\phi(\xi_{\bar\jmath})+l}\le y\le\xi_{\bar\jmath}+\rho.
$$
Whence, by arguing as in \eqref{contoausi}, we deduce
\begin{equation}\label{incro2}
\begin{aligned}
&\,\int_{I^n_l}\ud x\int^{\xi_{\bar\jmath}+\rho}_{x^n_{\phi(\xi_{\bar\jmath})+l}}\frac{u^{n}[k-1](x)+\frac {l}{2}-u^n[k-1](y)}{|x-y|^{1+2s}}\ud y\\
\ge&\, \Big(\frac{l}{2}+1\Big)\is^s\big(I^n_l,(x^n_{\phi(\xi_{\bar\jmath})+l}, \xi_{\bar\jmath}+\rho)\big)-C(s)(x^n_{\phi(\xi_{\bar\jmath})+l}-x^n_{\phi(\xi_{\bar\jmath})+l-1}),
\end{aligned}
\end{equation}
where in the last inequality we have used that
\begin{equation}\label{contoausi}
\begin{aligned}
&\,\int_{I^n_l}\ud x\int^{\xi_{\bar\jmath}+\rho}_{x^n_{\phi(\xi_{\bar\jmath})+l}}\frac{x-y}{|x-y|^{1+2s}}\ud y
\le -C(s)(x^n_{\phi(\xi_{\bar\jmath})+l}-x^n_{\phi(\xi_{\bar\jmath})+l-1}).
\end{aligned}
\end{equation}
Finally, by straightforward computations, we obtain
\begin{equation}\label{calcoloint}
\begin{aligned}
\is^s\big(I^n_l,(\xi_{\bar\jmath}-\rho,x^n_{\phi(\xi_{\bar\jmath})+l-1})\big)\le&\,\frac{1}{2s(1-2s)}(x^n_{\phi(\xi_{\bar\jmath})+l}-x^n_{\phi(\xi_{\bar\jmath})+l-1})^{1-2s}\\
&\,+C(s)(x^n_{\phi(\xi_{\bar\jmath})+l}-x^n_{\phi(\xi_{\bar\jmath})+l-1}),\\
\is^s\big(I^n_l,(x^n_{\phi(\xi_{\bar\jmath})+l}, \xi_{\bar\jmath}+\rho)\big)\ge
&\,\frac{1}{2s(1-2s)}(x^n_{\phi(\xi_{\bar\jmath})+l}-x^n_{\phi(\xi_{\bar\jmath})+l-1})^{1-2s}\\
&\,-C(s)(x^n_{\phi(\xi_{\bar\jmath})+l}-x^n_{\phi(\xi_{\bar\jmath})+l-1}).
\end{aligned}
\end{equation}
By \eqref{differene}, \eqref{vienezero2}, \eqref{incro0}, \eqref{incro}, \eqref{incro1}, \eqref{incro2}, \eqref{calcoloint}, we obtain \eqref{claim}.

Let $\hat\jmath\in\{1,\ldots,J\}$, $\hat q\in \{1,\ldots,\mm(\xi_{\hat\jmath})\}$ be such that $x^n_{\phi(\xi_{\hat\jmath})+\hat q}-x^{n}_{\phi(\xi_{\hat\jmath})+\hat q-1}\ge \frac 1 \Lambda\Delta_X(X^n)$; moreover,    let $\hat k$ be given by:
$\hat k:=\sum_{j=1}^{\hat\jmath-1}(\mm(\xi_{\hat\jmath})-1)$ if $\hat q=\mm(\xi_{\hat\jmath})$ and let $\hat k:=\sum_{j=1}^{\hat\jmath-1}(\mm(\xi_{\hat\jmath})-1)+\hat q-1$ if $1\le \hat q\le \mm(\xi_{\hat\jmath})-1$.


By \eqref{claim} and \eqref{secoclaim}, we have that
\begin{multline*}
\liminf_{n\to +\infty}\frac{\E^s(\widehat X^n)-\E^s(X^n)}{\Delta_X(X^n)}
= \liminf_{n\to +\infty}\sum_{k=1}^K\frac{\E^s(X^n[k])-\E^s(X^n[k-1])}{\Delta_X(X^n)}\\
\ge\frac 1 \Lambda \liminf_{n\to +\infty}\frac{\E^s(X^n[\hat k])-\E^s(X^n[\hat k-1])}{x^n_{\phi(\xi_{\hat\jmath})+\hat m}-x^{n}_{\phi(\xi_{\hat\jmath})+\hat m-1}}-C(s,\Lambda)
=+\infty,
\end{multline*}
whence we deduce the claim.
\end{proof}
With Lemma \ref{collision} in hand, we are now able to prove Proposition \ref{nocollis<}.
\begin{proof}[Proof of Proposition \ref{nocollis<}]
Assume by contradiction that the maximal existence time $T$ for the Cauchy problem \eqref{cauchy} is finite. 
Notice that, given $	\bar \delta >0$ there exists $\tau=\tau(\bar \delta)>0$ such that, if $\delta(X(\bar t))\ge \bar \delta$ for some $\bar t\ge 0$, then $T\ge \bar t + \tau$. 
By  definition of maximal time, this implies that
 \begin{equation*}\label{deltaazero}
 \lim_{t \to T^-}\delta(X(t))=0,
 \end{equation*}
which, by Proposition \ref{spostouna}, yields
 \begin{equation}\label{contr}
\lim_{t\to T^-}|\nabla\E^s(X(t))|=+\infty.
\end{equation}

Since $X$ solves the Cauchy problem \eqref{cauchy} and $\E^s$ is bounded from below, we have that $X$ is an absolutely continuous function in $[0,T]$ and that for all $t\in (0,T)$ 
\begin{equation*}
\begin{aligned}
\frac{\E^s(X(T))-\E^s(X(t))}{\dist(X(t),X(T))} & =- \frac{ \int_t^T | \frac{\mathrm{d}}{\mathrm{d} \sigma} \E^s(X(\sigma))| \ud \sigma}{\dist(X(t),X(T))}
\\
& =- \frac{ \int_t^T |\dot X(\sigma)||\nabla\E^s(X(\sigma))| \ud \sigma}{\dist(X(t),X(T))} \le - \inf_{\sigma\in(t,T)} |\nabla\E^s(X(\sigma))|.
\end{aligned}
\end{equation*}
By \eqref{contr} we get
\begin{equation}\label{gradinf}
\lim_{t\to T^-} \frac{\E^s(X(T))-\E^s(X(t))}{\dist(X(t),X(T))}=-\infty.
\end{equation}
Let now $\{t^n\}_{n\in\N}$ be a sequence of non-decreasing times such that $t^n\to  T^-$ (as $n\to +\infty$). For every $n\in\N$ we set $X^n:=X(t^n)$.
Recalling the definition of $\widehat X^n$ in \eqref{hatdefi}, by Lemma \ref{missinglemma}, we have
\begin{equation}\label{prelima}
\lim_{n\to +\infty}\frac{\E^s(X(T))-\E^s(\widehat X^n)}{\dist(\widehat X^n,X(T))}=C\in\R.
\end{equation}
Moreover, by Lemma \ref{collision}
$
\E^s(\widehat X^n)-\E^s(X^n)\ge 0
$
for $n$ large enough.
Therefore, in view of  \eqref{gradinf} and \eqref{crucial}, we have 
\begin{equation}\label{prelima}
\begin{aligned}
-\infty=&\,\lim_{n\to +\infty}\frac{\E^s(X(T))-\E^s(X^n)}{\dist(X^n,X(T))}\\
\ge&\,
\liminf_{n\to +\infty}\frac{\E^s(X(T))-\E^s(\widehat X^n)}{\dist(\widehat X^n,X(T))}\frac{\dist(\widehat X^n,X(T))}{\dist(X^n,X(T))}+\liminf_{n\to +\infty}\frac{\E^s(\widehat X^n)-\E^s(X^n)}{\dist(X^n,X(T))}\\
\ge&\,-|C|\sqrt{\Lambda},
\end{aligned}
\end{equation}
thus getting a contradiction. Therefore $T=+\infty$
and $\delta(X(t))>0$ for every $t\ge 0$.

\end{proof}
For any given $X \in \X^\Lambda$ we set 
\begin{equation}\label{barycenter}
\bari(X):= 
(X\cdot e)/\Z.
\end{equation}
The next result provides the convergence of the gradient flow solution to a global minimizer, with (optimal in $\Lambda$) exponential decay. 
\begin{proposition}\label{convecrit}
Let $X^0\in\rs^\Lambda$ and let $X^\infty$ be the unique (up to permutations of the indices) configuration in $\cs^\Lambda$ having the same barycenter as $X^0$. Then, denoting by $X=X(t)$ the unique solution to the Cauchy problem \eqref{cauchy} we have
\begin{equation*}
\dist(X(t),X^\infty)\le \dist(X^0,X^\infty)e^{-\gamma(s)\Lambda^{-2s}t},
\end{equation*}
where $\gamma(s)$ is the constant in \eqref{stimasubpeggio}.
\end{proposition}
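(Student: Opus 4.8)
The plan is to reduce the statement to a Grönwall inequality for the squared distance $\dist(X(t),X^\infty)^2$, exploiting that $X^\infty$ is a critical point of $\E^s$ (so that $\nabla\E^s(X^\infty)=0$ by \eqref{prima} and \eqref{laplafranullo}) together with the uniform convexity transverse to translations encoded in \eqref{stimasubpeggio}. By Proposition \ref{nocollis<} the solution $X(t)$ exists for all times and stays in $\rs^\Lambda$; in particular the ordering of the particles is preserved along the flow, so we may track each particle continuously and match it, index by index in the non-decreasing reordering, with the corresponding particle of $X^\infty$.

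First I would pass to the $\Lambda$-periodic lifts. Writing $\mathsf x_z(t)$ and $\mathsf x_z^\infty$ for the (continuously tracked, order-matched) particles, set $Y_z(t):=\mathsf x_z(t)-\mathsf x_z^\infty$. Since both sequences satisfy $\mathsf x_{z+\Lambda}=\mathsf x_z+\Lambda$, the difference is $\Lambda$-periodic and defines a vector $Y(t)\in\R^\Lambda$ with $|Y(t)|^2=\sum_{\lambda=1}^\Lambda Y_\lambda(t)^2$. Differentiating the translation invariance \eqref{invariant} gives $\sum_\lambda\partial_{x_\lambda}\E^s\equiv0$, hence $\frac{d}{dt}\sum_\lambda\mathsf x_\lambda(t)=0$: the quantity $X(t)\cdot e$, and thus $\bari(X(t))$ (see \eqref{barycenter}), is conserved. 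Since $X^\infty$ is chosen with the same barycenter as $X^0$, this forces $Y(t)\cdot e=\sum_\lambda Y_\lambda(t)=0$ for every $t$, so $Y(t)$ lies in the hyperplane orthogonal to $e$ on which \eqref{stimasubpeggio} is available.

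Next I would differentiate, using $\dot X=-\nabla\E^s(X)$ and $\nabla\E^s(X^\infty)=0$, and insert the increment of the gradient along the segment $\theta\mapsto X^\theta:=X^\infty+\theta Y(t)$, $\theta\in[0,1]$ (taken in the matched lifts), obtaining
\begin{equation*}
\frac{d}{dt}\tfrac12|Y(t)|^2=-Y(t)\cdot\bigl(\nabla\E^s(X(t))-\nabla\E^s(X^\infty)\bigr)=-\int_0^1\langle\nabla^2\E^s(X^\theta)Y(t),Y(t)\rangle\,d\theta.
\end{equation*}
A convex combination of two strictly ordered, $\Lambda$-quasiperiodic sequences is again strictly ordered and quasiperiodic, so each $X^\theta$ belongs to $\rs^\Lambda$ (where $\E^s$ is smooth); because $Y(t)\cdot e=0$, the bound \eqref{stimasubpeggio} applies along the whole segment and yields $\langle\nabla^2\E^s(X^\theta)Y,Y\rangle\ge\gamma(s)\Lambda^{-2s}|Y|^2$. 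Hence $\frac{d}{dt}|Y(t)|^2\le-2\gamma(s)\Lambda^{-2s}|Y(t)|^2$, and Grönwall gives $|Y(t)|\le|Y(0)|\,e^{-\gamma(s)\Lambda^{-2s}t}$. Since $\nper{\cdot}\le|\cdot|$ componentwise, one has $\dist(X(t),X^\infty)\le|Y(t)|$ by \eqref{distanza}, and choosing the lift of $X^\infty$ that realizes the distance at $t=0$ (no periodic wrap-around, permitted by the barycenter normalization) gives $|Y(0)|=\dist(X^0,X^\infty)$, closing the estimate.

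The main obstacle I anticipate is the geometric bookkeeping that legitimizes the convexity step: verifying that the straight segment joining $X(t)$ to $X^\infty$ never produces a collision — so that $\nabla^2\E^s$ is defined and \eqref{stimasubpeggio} holds along it — and reconciling the Euclidean norm $|Y|$ driving the differential inequality with the periodic, reordered distance $\dist$ in \eqref{distanza}. Both points rest on the order preservation furnished by Proposition \ref{nocollis<} and on selecting the labeling and lift of $X^\infty$ realizing the distance to $X^0$; once these are fixed the remainder is the routine Grönwall computation displayed above.
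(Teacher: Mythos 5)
Your proposal is correct and follows essentially the same route as the paper's proof: conservation of the barycenter via $\nabla\E^s(X)\cdot e=0$, the transverse strong convexity \eqref{stimasubpeggio} applied on the hyperplane $\{h\cdot e=0\}$, and a Gr\"onwall argument (the paper phrases this by quotienting out $\R e$ and invoking strict convexity of $\widetilde\E^s$ on $\widetilde{\rs}^\Lambda$, which is exactly your integration of the Hessian along the segment). In fact you make explicit two points the paper leaves implicit — that the segment of ordered quasiperiodic lifts stays in $\rs^\Lambda$, and the choice of lift of $X^\infty$ reconciling $|Y(0)|$ with $\dist(X^0,X^\infty)$ — so no gap relative to the published argument.
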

\begin{proof}
We set $\widetilde{\X}^\Lambda:=\X^\Lambda/(\R e)$ and  $\widetilde{\rs}^\Lambda:=\rs^\Lambda/(\R e)$.
For every $X\in{\X}^\Lambda$
 we denote by $\widetilde{\XX}$ the $\Lambda$-periodic extension of $X-(X\cdot e) e$ and by $\widetilde X$ the corresponding element in $\X^{\Lambda}$; in this sense, $\widetilde X$ provides the canonical representative of the equivalence class $[X]$ of $X$ in $\widetilde{\X}^\Lambda$.
 Let $\widetilde \E^s:\widetilde{\X}^\Lambda\to [0,+\infty)$ be defined by
$\widetilde\E^s([X]):=\E^s(\widetilde X)$. 
Since $\nabla \E^s(X+\tau)=\nabla \E^s(X)$ for all $\tau \in\R$ we clearly have  
\begin{equation}\label{gradort}
\nabla \E^s(X)\cdot e=0,
\end{equation}
whence we deduce that $\nabla\widetilde\E^s([X])=\nabla\E^s(X)$.
From now on we will identify $[X]$ with $\widetilde X$.

Let $X=X(t)$ be a solution to \eqref{cauchy}. 
By \eqref{gradort}, we have that
$\dot{(X\cdot e)} = \dot X\cdot e =-\nabla\E^s(X)\cdot e = 0$, so that $\bari(X(t))=\bari(X_0)$ for every $t\ge 0$.
Moreover, 
by Proposition \ref{nocollis<} and by \eqref{gradort} we have that $\dist(X(t),X_\infty)\to 0$ as $t\to +\infty$ .
By the very definition of $\widetilde X$ we have
$$
\dot{\widetilde{X}}= \dot X-(\dot X\cdot e)e=-\big(\nabla \E^s(X)-(\nabla \E^s(X)\cdot e) e\big)=-\nabla\widetilde\E^s(\widetilde X),
$$
so that
$\dist(\widetilde X(t),\widetilde X_\infty)\to 0$ as $t\to +\infty$.
Finally, by \eqref{posvarsecsub} we have that $\widetilde{\E}^s$ is strictly convex on $ \widetilde{\rs}^\Lambda$ and by \eqref{stimasubpeggio}, using Gronwall inequality,
we get that
$$
\dist( X(t), X_\infty)=\dist(\widetilde X(t),\widetilde X_\infty)\le \dist(\widetilde X_0,\widetilde X_\infty)e^{-C(s)\Lambda^{-2s}t}=\dist(X_0, X_\infty)e^{-C(s)\Lambda^{-2s}t}.
$$
\end{proof}
\subsection{The supercritical case $\frac 12 \le s<1$}
With the notation introduced in Section \ref{subsec:dynsub}, we will study the gradient flow system of the energy $\E_\ep^s$ in \eqref{enesupercrit} for  $\frac 1 2\le s<1$ and $\ep>0$
\begin{equation}\label{cauchyep}
\left\{
\begin{array}{l}
\dot{X}(t)=-\nabla\E_\ep^s(X(t))\\
X(0)=X^0,
\end{array}
\right.
\end{equation}
where $X^0\in\X^{\Lambda}$.
Notice that by Proposition \ref{prop:firstvarsuper}, for every $X^0\in\X^\Lambda$, the Cauchy problem \eqref{cauchyep} admits a unique and $C^1$ solution $X_\ep=X_\ep(t)$ in $[0,+\infty)$.

The next result concerns the convergence of the gradient flow solution to a minimizer of the energy.
Before stating it, recall the definitions of $\bari(\cdot)$ and of $\dist(\cdot,\cdot)$ in \eqref{barycenter} and \eqref{distanza}, respectively.
\begin{proposition}
Let $\frac 1 2\le s<1$ and let $X^0\in\rs^{\Lambda}$. There exists $\bar\ep=\bar\ep(s,X^0)>0$ such that, for every $0<\ep<\bar\ep$, denoting by $X_\ep=X_\ep(t)$ the unique solution to \eqref{cauchyep} we have that
 $X_\ep(t)\in \rs^{\Lambda}$ for every $t\ge 0$.
 Moreover, denoting by $X^\infty$ the unique - up to a permutation of indices - element in $\cs^{\Lambda}$ such that $\bari(X^\infty)=\bari (X^0)$, we have that
\begin{equation}\label{rateconv}
\dist(X_\ep(t),X^{\infty})\le \dist(X^0,X^{\infty})e^{-\gamma(s)\Lambda^{-2s}t},
\end{equation}
where $\gamma(s)$ is the constant in \eqref{stimasuperpeggio}.
%
\end{proposition}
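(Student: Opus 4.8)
The plan is to reproduce, in the supercritical regime, the two-step scheme used in the subcritical case (Propositions \ref{nocollis<} and \ref{convecrit}): first rule out collisions along the flow for $\ep$ small, so that the trajectory stays in $\rs^\Lambda$ with $\delta(X_\ep(t))$ bounded away from $4\ep$, and then deduce exponential convergence from the strong convexity encoded in \eqref{stimasuperpeggio}. The essential simplification with respect to the subcritical case is that here the no-collision mechanism is purely energetic: by Proposition \ref{quasigamma}, approaching a collision forces the energy to exceed $\Lambda\sigma^s(\ep)$ by an amount that blows up as $\ep\to 0$, so a flow issued from a fixed regular datum simply cannot afford to cross this barrier.

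For the first step I would use that, since $X_\ep$ solves \eqref{cauchyep}, the energy is non-increasing, whence $\E_\ep^s(X_\ep(t))\le\E_\ep^s(X^0)$ for all $t\ge 0$. A short computation in the spirit of \eqref{base} provides, for the fixed regular datum $X^0$, an upper bound $\E_\ep^s(X^0)\le\Lambda\sigma^s(\ep)+C_0$ with $C_0=C_0(s,\Lambda,X^0)$ independent of $\ep$ (the $O(1)$ part being controlled by $\delta(X^0)>0$). I then fix $\bar\delta\in(0,\delta(X^0))$ small enough that $\tfrac12 C(s)\sigma^s(\bar\delta)>C_0$, where $C(s)$ is the constant in \eqref{theclaim}, and invoke Proposition \ref{quasigamma} to obtain $\bar\ep=\bar\ep(s,X^0)>0$, with $\bar\ep<\bar\delta/4$, such that for $0<\ep<\bar\ep$ every configuration with $\delta(X)\le\bar\delta$ satisfies $\E_\ep^s(X)\ge\Lambda\sigma^s(\ep)+\tfrac12 C(s)\sigma^s(\bar\delta)>\E_\ep^s(X^0)$, exactly as in \eqref{daprop}. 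A first-exit-time argument then closes the loop: $\delta(X^0)>\bar\delta$ and $t\mapsto\delta(X_\ep(t))$ is continuous, so at a hypothetical first time $t_0$ with $\delta(X_\ep(t_0))=\bar\delta$ one would get $\E_\ep^s(X_\ep(t_0))>\E_\ep^s(X^0)\ge\E_\ep^s(X_\ep(t_0))$, a contradiction. Hence $\delta(X_\ep(t))>\bar\delta>4\ep$ for all $t$; in particular $X_\ep(t)\in\rs^\Lambda$ and \eqref{stimasuperpeggio} holds along the whole trajectory.

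For the convergence I would argue as in Proposition \ref{convecrit}. Translation invariance of $\E_\ep^s$ gives $\nabla\E_\ep^s(X)\cdot e=0$, so $\bari(X_\ep(t))$ is conserved and the flow remains on the affine slice $\{\bari=\bari(X^0)\}$, which contains the target $X^\infty$ (a critical point by \eqref{primasuper} and \eqref{laplafranullo}). Passing to the quotient $\widetilde\X^\Lambda=\X^\Lambda/(\R e)$ and working in nearest-neighbour gap coordinates, the region $\rs^\Lambda_{\bar\delta}:=\{\delta>\bar\delta\}$ is convex; moreover every gap along the segment joining $\widetilde X_\ep(t)$ to $\widetilde X^\infty$ is a convex combination of numbers $\ge\bar\delta$, so the whole segment stays in $\{\delta\ge\bar\delta\}\subset\{\delta>4\ep\}$. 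Integrating \eqref{stimasuperpeggio} along this segment and using $\nabla\widetilde\E_\ep^s(\widetilde X^\infty)=0$ yields the strong monotonicity $\langle\nabla\widetilde\E_\ep^s(\widetilde X_\ep(t)),\widetilde X_\ep(t)-\widetilde X^\infty\rangle\ge\gamma(s)\Lambda^{-2s}\dist(X_\ep(t),X^\infty)^2$; differentiating $\tfrac12\dist(X_\ep(t),X^\infty)^2$ and applying Gronwall's inequality then gives \eqref{rateconv}.

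The main obstacle lies in the first step, specifically in the ordering of the quantifiers: the bound on $\E_\ep^s(X^0)$ must be uniform in $\ep$ after subtracting $\Lambda\sigma^s(\ep)$, and $\bar\delta$ has to be chosen before $\bar\ep$ is produced by Proposition \ref{quasigamma}, exactly as in the proof of Theorem \ref{mincrit}; this is precisely what forces $\bar\ep$ to depend on $X^0$. The convergence step is then essentially routine, the only point deserving attention being that \eqref{stimasuperpeggio} is available only when $\delta\ge 4\ep$, which is why the convexity computation must be carried out on the convex set $\rs^\Lambda_{\bar\delta}$ in gap coordinates rather than on all of $\widetilde\rs^\Lambda$.
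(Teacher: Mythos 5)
Your proposal is correct and follows essentially the same route as the paper: the identical energy-barrier argument (the uniform-in-$\ep$ bound $\E_\ep^s(X^0)\le \Lambda\sigma^s(\ep)+C_0$, the choice of $\bar\delta$ \emph{before} $\bar\ep$ via Proposition \ref{quasigamma}, and monotonicity of the energy along the flow) to keep $\delta(X_\ep(t))>\bar\delta>4\ep$ for all $t\ge 0$, followed by barycenter conservation and a Gronwall argument based on \eqref{stimasuperpeggio}. Your explicit segment-in-gap-coordinates convexity computation simply fills in what the paper compresses into ``arguing verbatim as in the proof of Proposition \ref{convecrit}'', and using directly that $X^\infty$ is a critical point even lets you skip the paper's intermediate identification of the critical points of $\E_\ep^s$ in $\rs^\Lambda_{\bar\delta}$ with $\cs^\Lambda$.
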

\begin{proof}
It is easy to see that \eqref{base} still holds true with $\overline X$ replaced by $X^0$ and $\bar C(s)$ replaced by some constant $\bar C(s,X^0)$. Let $\omega^s(\cdot):=C(s)\sigma^s(\cdot)$ be the quantity on the right-hand side of \eqref{theclaim} and let
 $\delta>0$ be as small as required in Proposition \ref{quasigamma} and such that $\bar C(s,X^0)<\frac{\omega^s(\delta)}{2}$.
 We first show that, for $\ep$ small enough, 
 \begin{equation}\label{deltastacc}
 \delta(X_\ep(t))>\delta\qquad\textrm{ for every }t\ge 0.
 \end{equation}
Indeed, assume   that \eqref{deltastacc} does not hold true at some  time $ t_\e\ge 0$; then,  since  the energy is decreasing along the flow,  Proposition \ref{quasigamma} would yield (for $\ep$ small enough) the following contradiction
 \begin{equation}\label{assu}
 \Lambda\sigma^s(\ep)+\frac{\omega^s(\delta)}{2} \le  \E_\ep^s(X_\ep( t_\e))\le \E_\ep^s(X^0)\le \Lambda\sigma^s(\ep)+\bar C(s,X^0)<  \Lambda\sigma^s(\ep)+\frac{\omega^s(\delta)}{2}\, .
 \end{equation}
 Let $\rs^\Lambda_{\delta}$ denote the class of configurations $X$ satisfying $\delta(X)>\delta$; 
assuming $4\ep<\delta$, we can use \eqref{stimasuperpeggio} to deduce that $\nabla^2\E_\ep^s$ is strictly positive modulo rigid translations on $\rs^\Lambda_{\delta}$, whence, together with \eqref{primasuper} and \eqref{laplafranullo},  we deduce that the set of critical points of $\E_\ep^s$ in $\rs^\Lambda_{\delta}$ coincides with 
$\cs^\Lambda$. 
Finally, by arguing verbatim as in the proof of Proposition \ref{convecrit}, using \eqref{stimasuperpeggio} in place of \eqref{stimasubpeggio}, we obtain 
the second part of the statement.
\end{proof}
Finally, we analyze the behavior, as $\ep\to 0$, of the solutions $X_\ep=X_\ep(t)$ of the Cauchy problem \eqref{cauchyep}.
To this end we introduce the following notation.
 Recalling Remark \ref{bendefi}, we can define for $\frac 1 2\le s<1$, the functional $\F^s:\X^\Lambda\to\R^{\Lambda}$ as
$$
\F^s_\lambda(X):=(-\Delta)^su^X(x_\lambda).
$$
\begin{proposition}
Let $\frac 1 2\le s<1$ and let $X^0=(X^0)^\ord \in\rs^\Lambda$. 
For every $\ep>0$, let $X_\ep=X_\ep(t)$ be the unique solution to the Cauchy problem \eqref{cauchyep}. Then, $X_\ep$ converges uniformly  in $[0,+\infty)$ (as $\ep\to 0$), with respect to the distance $\dist(\cdot,\cdot)$ in \eqref{distanza},
 to the global solution $X_0$ 
of  the Cauchy problem 
\begin{equation}\label{nuovopb}
\left\{\begin{array}{l}\dot X=-\F^s(X)\\
X(0)=X^0.
\end{array}
\right.
\end{equation}
\end{proposition}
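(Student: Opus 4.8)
The plan is to treat both Cauchy problems as gradient-type flows with the \emph{same} barycenter, and to obtain the convergence $X_\ep\to X_0$ from a Gronwall estimate that exploits the uniform convergence of the driving fields together with the (uniform) convexity of the limiting energy. Write $\F_\ep^s(X):=\nabla\E_\ep^s(X)$, whose components are $(\F_\ep^s)_{\lambda(\xi)}(X)=(-\Delta)^s u_{\ep\ep}^X(\xi)$ by \eqref{primasuper}, so that \eqref{cauchyep} reads $\dot X_\ep=-\F_\ep^s(X_\ep)$, while \eqref{nuovopb} reads $\dot X_0=-\F^s(X_0)$. First I would record the a priori separation: by \eqref{deltastacc} there exist $\delta>0$ and $\bar\ep>0$ such that $\delta(X_\ep(t))>\delta$ for every $t\ge0$ and every $0<\ep<\bar\ep$; since $X^0=(X^0)^\ord$ and collisions (hence crossings) are excluded, each $X_\ep(\cdot)$ stays ordered, so $X_\ep(t)\in\rs^\Lambda_{\delta}$ (ordered configurations with minimal gap larger than $\delta$) for all $t$. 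Both flows preserve the quantity $X\cdot e$ — because $\F_\ep^s\cdot e=0$ by translation invariance (as in \eqref{gradort}) and $\F^s\cdot e=\Lambda^{-1/2}\sum_\lambda(-\Delta)^s u^X(x_\lambda)=0$ is the $\ep\to0$ limit of the same identity for $\E_\ep^s$ — so any two such trajectories issuing from $X^0$ differ by a vector orthogonal to $e$.

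Two ingredients then drive the argument. The first is the uniform convergence of the velocity fields: extending Remark \ref{bendefi} to the double mollification (note that $\rho_\ep\ast\rho_\ep$ is again a standard mollifier, now at scale $2\ep$), one gets $(-\Delta)^s u_{\ep\ep}^X(\xi)\to(-\Delta)^s u^X(\xi)$ uniformly for $X\in\rs^\Lambda_{\delta}$ and $\xi\in\sss(X)$; hence $\eta(\ep):=\sup_{X\in\rs^\Lambda_{\delta}}|\F_\ep^s(X)-\F^s(X)|\to0$ as $\ep\to0$. The second is the strong monotonicity of the limiting field. Since $\F^s$ is the gradient of the renormalized energy, its Jacobian is the quadratic form $\sum_{\xi\ne\eta}a^s(\xi,\eta)(h_{\lambda(\xi)}-h_{\lambda(\eta)})^2$, which by \eqref{darichiamareinsuper} is bounded below by $\gamma(s)\Lambda^{-2s}|h|^2$ on $\{h\cdot e=0\}$ for every $X\in\rs^\Lambda$. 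For two ordered configurations $X,X'\in\rs^\Lambda_{\delta}$ with $(X-X')\cdot e=0$, the Euclidean segment $\theta\mapsto(1-\theta)X'+\theta X$ has gaps that interpolate linearly (the gaps are affine in the particle positions), so it remains in $\rs^\Lambda_{\delta}$ — this is exactly the convexity in the gap variables noted in the proof of Theorem \ref{mincrit}. Integrating the Hessian lower bound along this segment yields
\[
\big(\F^s(X)-\F^s(X')\big)\cdot(X-X')\ge \gamma(s)\Lambda^{-2s}|X-X'|^2.
\]

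With these two ingredients the comparison is routine. For $0<\ep,\ep'<\bar\ep$ set $g(t):=\dist(X_\ep(t),X_{\ep'}(t))$, which equals $|X_\ep(t)-X_{\ep'}(t)|$ since both trajectories are ordered and (by a bootstrap) stay close. Differentiating and splitting
\[
\F_\ep^s(X_\ep)-\F_{\ep'}^s(X_{\ep'})=\big(\F_\ep^s(X_\ep)-\F^s(X_\ep)\big)+\big(\F^s(X_\ep)-\F^s(X_{\ep'})\big)+\big(\F^s(X_{\ep'})-\F_{\ep'}^s(X_{\ep'})\big),
\]
I would bound the middle term by the monotonicity above and the outer two by $\eta(\cdot)$, obtaining $\tfrac{d}{dt}g\le-\gamma(s)\Lambda^{-2s}g+\eta(\ep)+\eta(\ep')$; since $g(0)=0$, Gronwall gives $g(t)\le\frac{\Lambda^{2s}}{\gamma(s)}\big(\eta(\ep)+\eta(\ep')\big)$ \emph{uniformly} in $t\ge0$ (here the strict positivity $\gamma(s)\Lambda^{-2s}>0$ is what upgrades the estimate from a finite-horizon to a global-in-time one). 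Thus $\{X_\ep\}$ is uniformly Cauchy on $[0,+\infty)$ and converges uniformly to some $X_0\in C\big([0,+\infty);\rs^\Lambda_{\delta}\big)$. Passing to the limit in $X_\ep(t)=X^0-\int_0^t\F_\ep^s(X_\ep)\,\ud\tau$ — using $X_\ep\to X_0$ uniformly, $\F_\ep^s\to\F^s$ uniformly on $\rs^\Lambda_{\delta}$, and continuity of $\F^s$ — identifies $X_0$ as a solution of \eqref{nuovopb}, which is the unique one because $\F^s$ is locally Lipschitz on $\rs^\Lambda$; in particular no separate global-existence proof for \eqref{nuovopb} is needed, since $\delta(X_0(t))\ge\delta$ is inherited from the limit.

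The main obstacle I anticipate is the monotonicity step: one must verify both that $\F^s$ is exactly the gradient of the limiting (renormalized) energy whose second variation is \eqref{darichiamareinsuper}, and that the Euclidean segment joining the two ordered, uniformly separated configurations does not leave the region where that Hessian bound is available — this forces the convexity to be read in the gap coordinates rather than in the raw particle positions. A secondary technical point is upgrading the pointwise convergence of Remark \ref{bendefi} to a bound uniform over $\rs^\Lambda_{\delta}$ for the doubly mollified potential, which is what makes $\eta(\ep)$ a genuine modulus of convergence.
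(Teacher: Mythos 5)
Your argument is correct, but the mechanism you use to get uniformity in time is genuinely different from the paper's. Both proofs share the first two ingredients: the confinement $\delta(X_\ep(t))>\delta$ from \eqref{deltastacc}, and the uniform convergence $\nabla\E_\ep^s\to\F^s$ on $\rs^\Lambda_{\delta}$ via Remark \ref{bendefi} and \eqref{primasuper} (your observation that $\rho_\ep\ast\rho_\ep$ is again a standard mollifier at scale $2\ep$ handles the double mollification, which the paper leaves implicit). From there the paper invokes standard continuous dependence for ODEs --- uniform convergence of locally Lipschitz vector fields yields global existence for \eqref{nuovopb} and locally uniform convergence $X_\ep\to X_0$ on compact time intervals --- and then upgrades to uniformity on $[0,+\infty)$ by exploiting the $\ep$-uniform exponential convergence \eqref{rateconv} of all trajectories (hence also of $X_0$) to the common equilibrium $X^\infty$: for large $t$ every trajectory is within any prescribed tolerance of $X^\infty$, and on the remaining compact interval local convergence applies. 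You instead bypass \eqref{rateconv} entirely and prove a direct contraction comparison: strong monotonicity of $\F^s$ on $\{h\cdot e=0\}$, obtained by integrating the Hessian bound \eqref{darichiamareinsuper} along the Euclidean segment --- legitimate because the gaps interpolate affinely for consistently ordered configurations, the same gap-convexity the paper uses in the proof of Theorem \ref{mincrit} --- combined with Gronwall gives $\sup_{t\ge 0}|X_\ep(t)-X_{\ep'}(t)|\le \Lambda^{2s}\gamma(s)^{-1}\bigl(\eta(\ep)+\eta(\ep')\bigr)$. This buys an explicit, quantitative uniform-in-time modulus of convergence, at the cost of two verifications the paper's route does not need: that the Jacobian of $\F^s$ is indeed the quadratic form with coefficients $a^s(\xi,\eta)$ (the computations \eqref{seconda}--\eqref{terza} extend verbatim to $s\ge\frac 12$ since the relevant series and principal values converge away from collisions; alternatively one passes to the limit $\ep\to 0$ in \eqref{stimasuperpeggio}), and that $\F^s\cdot e=0$, which you correctly obtain as the limit of the translation-invariance identity for $\E_\ep^s$. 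Two small bookkeeping points: run the Gronwall inequality on the labelled Euclidean distance of the periodic extensions, which dominates $\dist(\cdot,\cdot)$ and lets you drop the slightly vague ``bootstrap'' remark; and note that the monotonicity applies precisely because two trajectories issuing from the same $X^0$ have constant, equal barycenter, so their difference stays orthogonal to $e$, as you observe. Ultimately both proofs rest on the same convexity estimate --- the paper through the already-established rate \eqref{rateconv}, you through a monotone-operator argument --- so there is no gap; your version is longer but self-contained and quantitative.
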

\begin{proof}
By \eqref{deltastacc} there exists $\delta>0$ such that, for $\e$ small enough,  the trajectories $X_\e$ lie in the set $\rs^\Lambda_{\delta}$ of configurations $X$ satisfying $\delta(X)>\delta$. By Remark \ref{bendefi} and by \eqref{primasuper}, we get that $\nabla\E_\ep^s$ converge uniformly  to $\F^s$ in $\rs^\Lambda_{\delta}$. As a consequence, the Cauchy problem \eqref{nuovopb} admits a global solution $X_0$ and the trajectories  $X_\e$  converge locally uniformly to $X_0$ as $\e\to 0$. Finally, in view of the uniform rate of convergence \eqref{rateconv} of $X_\e(t)$ to $X^\infty$ as $t\to +\infty$, the convergence of   $X_\e$ to  $X_0$ is uniform on the whole half-line $[0,+\infty)$.
\end{proof}
\begin{remark}
\rm{Let $\frac 1 2\le s<1$. 
Given $\delta>0$ and recalling that  $\rs^\Lambda_\delta$ denotes the class of configurations $X$ satisfying $\delta(X)>\delta$, we have that $\nabla\E_\ep^s$ converges, uniformly in $\rs^\Lambda_\delta$, to $\F^s$ (as $\ep\to 0$).
Therefore, $\E_\ep^s-C(\ep,s,\Lambda)$, for suitable choices of the constant $C(\ep,s,\Lambda)$, are uniformly bounded (and uniformly Lipschitz) in $\rs^\Lambda_\delta$. Clearly, the constant $C(\ep,s,\Lambda)$ can be chosen equal to $\E^s_\ep(\cs^\Lambda)$; in fact, one can check that
$\E^s_\ep(\cs^\Lambda)-\Lambda\sigma^s(\ep)$ converge (as $\ep\to 0$) to a constant, so that $C(\ep,s,\Lambda)$ could equivalently be chosen equal to $\Lambda\sigma^s(\ep)$. Therefore, setting $\W_\ep^s:=\E_\ep^s-\Lambda\sigma^s(\ep)$, by Ascoli-Arzel\'a Theorem, we have that, up to a subsequence, $\W_\ep^s$ converge uniformly in $\rs^\Lambda_\delta$ to a function $\W_0^s$ (as $\ep\to 0$). Moreover, by uniform convergence of the gradients, $\nabla\W_0^s=\F^s$.
As a consequence, the limit function $\W^s_0$ is uniquely determined, up to additive constants, and in fact, using that the whole sequence $\W_\ep^s(\cs^\Lambda)$ converges as $\ep\to 0$, we have that $\W^s_0$ is uniquely determined. As a consequence, the whole sequence $\W_\ep^s$ converges to $\W_0^s$ as $\ep\to 0$. In this respect, the solution $X_0$ of \eqref{nuovopb} is the gradient flow of the {\it renormalized energy} $\W_0^s$ starting from $X^0$.
}
\end{remark}

\end{document}